\title{Assembly Maps}
\author{Wolfgang L\"uck}
        \email{wolfgang.lueck@him.uni-bonn.de}
          \urladdr{http://www.him.uni-bonn.de/lueck}
        \address{Mathematisches Institut der Universit\"at Bonn\\
                Endenicher Allee 60\\
                53115 Bonn, Germany}
         \date{January, 2019}
\keywords{assembly maps, equivariant homology and homotopy theory, Farrell-Jones Conjecture, Baum-Connes Conjecture}
    \subjclass[2010]{primary: 18F25,  secondary: 46L80, 55P91, 57N99}
\DeclareMathAlphabet\EuR{U}{eur}{m}{n}
\SetMathAlphabet\EuR{bold}{U}{eur}{b}{n}
\theoremstyle{plain}
\newtheorem{theorem}{Theorem}[section]
\newtheorem{lemma}[theorem]{Lemma}
\newtheorem{conjecture}[theorem]{Conjecture}
\newtheorem{problem}[theorem]{Problem}
\theoremstyle{definition}
\newtheorem{definition}[theorem]{Definition}
\newtheorem{remark}[theorem]{Remark}
\global\let\c@equation=\c@theorem}
\newcommand{\comsquare}[8]                   % Produces a commutative square
{\begin{CD}
#1 @>#2>> #3\\
@V{#4}VV @V{#5}VV\\
#6 @>#7>> #8
\end{CD}
}
\newcommand{\xycomsquare}[8]                   % kommutatives Quadrat (xy-Version)
{\xymatrix
{#1 \ar[r]^{#2} \ar[d]^{#4} &
#3 \ar[d]^{#5}  \\
#6\ar[r]^{#7} &
#8
}
}
\newcommand{\xycomsquareminus}[8]                      % kommutatives Quadrat (xy-Version)
{\xymatrix{#1 \ar[r]^-{#2} \ar[d]^-{#4} &
#3 \ar[d]^-{#5}  \\
#6\ar[r]^-{#7} &
#8
}
}
\newcommand{\calall}{{\mathcal A} {\mathcal L}{\mathcal L}}
\newcommand{\calbc}{{\mathcal B}{\mathcal C}}
\newcommand{\calfcyc}{{\mathcal F}{\mathcal C}{\mathcal Y}}
\newcommand{\calcyc}{{\mathcal C}{\mathcal Y}}
\newcommand{\calfj}{{\mathcal F}\!{\mathcal J}}
\newcommand{\calfin}{{\mathcal F}{\mathcal I}{\mathcal N}}
\newcommand{\calvcyc}{{\mathcal V}{\mathcal C}{\mathcal Y}}
\newcommand{\calT}{{\mathcal T}}
\newcommand{\caltr}{{\mathcal T}{\mathcal R}}
\newcommand{\calf}{\mathcal{F}}
\newcommand{\calg}{\mathcal{G}}
\newcommand{\calh}{\mathcal{H}}
\newcommand{\caln}{{\mathcal N}}
\newcommand{\cals}{\mathcal{S}}
\newcommand{\IC}{{\mathbb C}}
\newcommand{\IQ}{{\mathbb Q}}
\newcommand{\IR}{{\mathbb R}}
\newcommand{\IZ}{{\mathbb Z}}
\newcommand{\bfA}{{\mathbf A}}
\newcommand{\bfB}{{\mathbf B}}
\newcommand{\bfE}{{\mathbf E}}
\newcommand{\bff}{{\mathbf f}}
\newcommand{\bfF}{{\mathbf F}}
\newcommand{\bfK}{{\mathbf K}}
\newcommand{\bfL}{{\mathbf L}}
\newcommand{\bfR}{{\mathbf R}}
\newcommand{\bfT}{{\mathbf T}}
\newcommand{\bfTC}{{\mathbf {TC}}}
\newcommand{\bfTHH}{{\mathbf {THH}}}
\newcommand{\bft}{{\mathbf t}}
\newcommand{\curs}{\EuR}
\newcommand{\GCW}{G\text{-}\curs{CW}}
\newcommand{\GCWpairs}{G\text{-}\curs{CW}^2}
\newcommand{\Groupoids}{\curs{Groupoids}}
\newcommand{\Or}{\curs{Or}}
\newcommand{\OrG}{\curs{Or}(G)}
\newcommand{\Spaces}{\curs{Spaces}}
\newcommand{\Spectra}{\curs{Spectra}}
\newcommand{\Sets}{\curs{Sets}}
\newcommand{\asmb}{\operatorname{asmb}}
\newcommand{\CAT}{\operatorname{CAT}}
\newcommand{\colim}{\operatorname{colim}}
\newcommand{\cone}{\operatorname{cone}}
\newcommand{\con}{\operatorname{con}}
\newcommand{\hocolim}{\operatorname{hocolim}}
\newcommand{\id}{\operatorname{id}}
\newcommand{\ind}{\operatorname{ind}}
\newcommand{\map}{\operatorname{map}}
\newcommand{\mor}{\operatorname{mor}}
\newcommand{\pr}{\operatorname{pr}}
\newcommand{\Spin}{\operatorname{Spin}}
\newcommand{\topo}{\operatorname{top}}
\newcommand{\tr}{\operatorname{tr}}
\newcommand{\Wh}{\operatorname{Wh}}
\newcommand{\pt}{\{\bullet\}}
\newcommand{\NK}{N\!K}
\newcommand{\EGF}[2]{E_{#2}(#1)}                   %classifying space of a family
\newcommand{\eub}[1]{\underline{E}#1}              %Eunderbar G = classifying space for proper G-actions
\newcommand{\edub}[1]{\underline{\underline{E}}#1} %Edoubleunderbar G = classifying space for virtually cyclic subgroups
\newcommand{\OrGF}[2]{\Or_{#2}(#1)}                %orbit category
\newcommand{\higherlim}[3]{{\setbox1=\hbox{\rm lim}
        \setbox2=\hbox to \wd1{\leftarrowfill} \ht2=0pt \dp2=-1pt
        \mathop{\vtop{\baselineskip=5pt\box1\box2}}
        _{#1}}^{#2}#3}
\newcommand{\version}[1]                       %marks the date of last editing and compilation
{\begin{center} last edited on #1\\
last compiled on \today\\
name of texfile: \jobname
\end{center}
}
\newcounter{commentcounter}
\begin{document}

\typeout{---------------------------- assembly.tex ----------------------------}

%%%%%%%%%%%%%%%%%%%%%%%%%%%%%%%%%%%%%%%%%%%%%%%%%%%%%%%%%%%%%%%%%%%%%%%%%%%%%%%%%
%%%%%%%%%%%%%%%%%%%%%%%%%%%%%%%%%%% Abstract %%%%%%%%%%%%%%%%%%%%%%%%%%%%%%%%%%%%%%%
%%%%%%%%%%%%%%%%%%%%%%%%%%%%%%%%%%%%%%%%%%%%%%%%%%%%%%%%%%%%%%%%%%%%%%%%%%%%%%%%%

\typeout{------------------------------------ Abstract
  ----------------------------------------}

\begin{abstract}
  We introduce and analyze the concept of an assembly map from the original homotopy
  theoretic point of view.  We give also interpretations in terms of surgery theory,
  controlled topology and index theory.  The motivation is that prominent conjectures of
  Farrell-Jones and Baum-Connes about $K$- and $L$-theory of group rings and group
  $C^*$-algebras predict that certain assembly maps are weak homotopy equivalences.
\end{abstract}

\maketitle

%%%%%%%%%%%%%%%%%%%%%%%%%%%%%%%%%%%%%%%%%%%%%%%%%%%%%%%%%%%%%%%%%%%%%%%%%%%%%%%%%
%%%%%%%%%%%%%%%%%%%%%%%%%%%%%%%%%% Introduction %%%%%%%%%%%%%%%%%%%%%%%%%%%%%%%%%%%%%
%%%%%%%%%%%%%%%%%%%%%%%%%%%%%%%%%%%%%%%%%%%%%%%%%%%%%%%%%%%%%%%%%%%%%%%%%%%%%%%%%

\typeout{------------------------------- Section 0: Introduction
  --------------------------------}

\setcounter{section}{-1}
\section{Introduction}

%%%%%%%%%%%%%%%%%%%%%%%%%%%%%%%%%%%%%%%%%%%%%%%%%%%%%%%%%%%%%%%%%%%%

\subsection{The homotopy theoretic description of assembly maps}
\label{subsec:The_homotopy_theoretic_description_of_assembly_maps_intro}

The quickest and probably for a homotopy theorist most convenient approach to assembly
maps is via homotopy colimits as explained in
Subsection~\ref{subsec:The_assembly_map_in_terms_of_homotopy_colimits}.  Let $\calf$ be a
family of subgroups of $G$, i.e., a collection of subgroups closed under conjugation and
passing to subgroups.  Let $\OrG$ be the orbit category%
\index{orbit category}
and $\OrGF{G}{\calf}$ be the full
subcategory consisting of objects $G/H$ satisfying $H \in \calf$. Consider a covariant functor
$\bfE^G \colon \OrG \to \Spectra$ to the category of spectra. We get from the
inclusion $\OrGF{G}{\calf} \to \OrG$ and the fact that $G/G$ is a terminal object in
$\OrG$ a map
\begin{equation}
\hocolim_{\OrGF{G}{\calf}} \bfE^G|_{\OrGF{G}{\calf}} \to \hocolim_{\OrG} \bfE^G =
  \bfE^G(G/G).
\label{asmb_map_hocolim_intro}
\end{equation}
It is called \emph{assembly map} since we are trying to assemble the values of $\bfE^G$ on homogeneous
spaces $G/H$ for $H \in \calf$ to get $\bfE(G/G)$.

On homotopy groups this assembly map can also be described as the map
\begin{equation}
H_n^G(\pr;\bfE^G) \colon H^G_n(\EGF{G}{\calf};\bfE^G) \to H_n^G(G/G;\bfE^G) = \pi_n(\bfE^G(G/G))
\label{asmb_homologically_intro}
\end{equation}
induced by the projection $\pr \colon \EGF{G}{\calf} \to G/G$ of the classifying $G$-space $\EGF{G}{\calf}$
for the family $\calf$, see Section~\ref{sec:Classifying_spaces_for_families_of_subgroups}, to $G/G$,
where $H_n^G(-;\bfE^G)$ is the $G$-homology theory in the sense of Definition~\ref{def:G-homology_theory}
associated to $\bfE^G$, see Lemma~\ref{lem:extending_from_Or(G)_to_excisive_functor}.

In all interesting situations one can take a global point of view. Namely, one starts with a covariant
functor respecting equivalences $\bfE \colon \Groupoids \to \Spectra$ and defines for a
group $G$ the functor $\bfE^G$ to be the composite of $\bfE$ with the functor
$\OrG \to \Groupoids$ given by the transport groupoid of a $G$-set, see
Subsection~\ref{subsec:Spectra_over_Groupoids}.

%%%%%%%%%%%%%%%%%%%%%%%%%%%%%%%%%%%%%%%%%%%%%%%%%%%%%%%%%%%%%%%%%%%%

\subsection{Isomorphism Conjectures}
\label{subsec:Isomorphism_Conjectures_intro}

The Meta Isomorphism Conjecture for $G$, $\calf$ and $\bfE^G$,
see  Section~\ref{sec:The_Meta-Isomorphism_Conjecture}, says that the assembly map
of~\eqref{asmb_map_hocolim_intro} is a weak homotopy equivalence,
or, equivalently, that the map~\eqref{asmb_homologically_intro} is bijective for all $n \in \IZ$.

If we take for $\bfE$ an appropriate functor modelling the algebraic $K$-theory or the
algebraic $L$-theory with decoration $\langle -\infty \rangle$ of the group ring $RG$ and
for $\calf$ the family of virtually cyclic subgroups, we obtain the Farrell-Jones
Conjecture~\ref{con:FJC}. It assembles $K_n(RG)$ and $L_n^{\langle -\infty \rangle}(RG)$
in terms of $K_n(RH)$ and $L_n^{\langle -\infty \rangle}(RH)$, where $H$ runs through the
virtually cyclic subgroups of $G$.

If we take for $\bfE$ an appropriate functor modelling the topological $K$-theory of the
reduced group $C^*$-algebra $C^*_r(G)$ and for $\calf$ the family of finite subgroups, we
obtain the Baum-Connes Conjecture~\ref{con:BCC}. It assembles $K_n^{\topo}(C^*_r(G))$ in
terms of $K_n^{\topo}(C^*_r(H))$, where $H$ runs through the finite subgroups of $G$.

The Farrell-Jones Conjecture~\ref{con:FJC} and the  Baum-Connes Conjecture~\ref{con:BCC}
are very powerful conjectures and are the main motivation for the study of assembly maps.
A survey of a lot of striking applications such as the ones to the conjectures of
Bass, Borel, Gromov-Lawson-Rosenberg, Kadison, Kaplansky, and Novikov
is given in Subsections~\ref{subsec:Applications_of_the_Farrell-Jones_Conjecture}
and~\ref{subsec:Applications_of_the_Baum-Connes_Conjecture}. 
The Farrell-Jones Conjecture~\ref{con:FJC} and the  Baum-Connes Conjecture~\ref{con:BCC} are 
known to be true for a surprisingly  large class of groups, as explained in 
Subsections~\ref{subsec:The_status_of_the_Farrell-Jones_Conjecture}
and~\ref{subsec:The_status_of_the_Baum-Connes_Conjecture}.
All of this is an impressive example how homotopy theoretic methods can be used
for problems in other fields such as algebra, geometry, manifold theory and operator algebras.

%%%%%%%%%%%%%%%%%%%%%%%%%%%%%%%%%%%%%%%%%%%%%%%%%%%%%%%%%%%%%%%%%%%%

\subsection{Other interpretations of  assembly maps}
\label{subsec:Other_interpretations_of_the_assembly_map_intro}

The homotopy theoretic approach is the best for structural purposes.  The applications and
the proofs of the Farrell-Jones Conjecture~\ref{con:FJC} and the Baum-Connes
Conjecture~\ref{con:BCC} require sophisticated analytic, topological and geometric
interpretations of the homotopy theoretic assembly maps, for instance in terms of surgery
theory, see
Subsection~\ref{subsec:The_interpretation_of_the_Farrell-Jones_assembly_map_for_L-theory_in_terms_of_surgery_theory},
as forget control maps, see
Subsection~\ref{subsec:The_interpretation_of_the_Farrell-Jones_assembly_map_in_terms_of_controlled_topology},
and in terms of index theory, see
Subsection~\ref{subsec:The_interpretation_of_the_Baum-Connes_assembly_map_in_terms_of_index_theory}.
This presents an intriguing interaction between homotopy theory, geometry and operator
theory.

%%%%%%%%%%%%%%%%%%%%%%%%%%%%%%%%%%%%%%%%%%%%%%%%%%%%%%%%%%%%%%%%%%%%

\subsection{The universal property of the assembly map}
\label{subsec:The_universal_property_of_the_assembly_map_intro}

In Section~\ref{sec:The_universal_property} we characterize the assembly map in the sense
that it is the universal approximation from the left by an excisive functor of a given
homotopy invariant functor $\GCWpairs \to \Spectra$.  This is the key ingredient in the
difficult identification of the various assembly maps mentioned in
Subsection~\ref{subsec:Other_interpretations_of_the_assembly_map_intro} above.  It
reflects the fact that in all of the Isomorphism Conjectures the hard and interesting
object is the target and the source is given by the $G$-homology of the classifying
$G$-spaces for a specific family of subgroups. The source is more accessible than the
target since one can apply standard methods from algebraic topology such as spectral
sequences and equivariant Chern characters.

%%%%%%%%%%%%%%%%%%%%%%%%%%%%%%%%%%%%%%%%%%%%%%%%%%%%%%%%%%%%%%%%%%%%

\subsection{Relative assembly maps}
\label{subsec:Relative_assembly_maps_intro}

Relative assembly maps are studied in Section~\ref{sec:Relative_assembly_maps}.  They
address the problem to make the families appearing in the various Isomorphism Conjectures as
small as possible.

%%%%%%%%%%%%%%%%%%%%%%%%%%%%%%%%%%%%%%%%%%%%%%%%%%%%%%%%%%%%%%%%%%%%

\subsection{Further aspects of assembly maps}
\label{subsec:Further_aspects_of_assembly_maps_intro}

The homotopy theoretic approach to assembly allows to relate assembly maps for various theories, 
such as the algebraic $K$-theory and $A$-theory via linearization, see 
Subsection~\ref{subsec:Relating_the_assembly_maps_for_K-theory_and_for_A-theory},
algebraic  $K$-theory of groups rings and topological $K$-theory of reduced group $C^*$-algebras, see
Subsection~\ref{subsec:Ralating_the_assembly_maps_of_Farrell-Jones_to_the_one_of_Baum-Connes},
and algebraic  $K$-theory of groups rings and the topological cyclic homology of the spherical group ring via cyclotomic traces,
see Subsection~\ref{subsec:Relating_the_assembly_maps_of_Farrell-Jones_to_the_one_the_one_for_topological_cyclic_homology}.
How assembly maps can be used for computations is illustrated in 
Section~\ref{sec:Computationally_tools} which is based on the global point of view
described in Section~\ref{sec:The_global_point_of_view}. Finally we formulate the challenge
of extending equivariant homotopy theory for finite groups to infinite groups in
Section~\ref{sec:The_challenge_of_extending_equivariant_homotopy_to_infinite_groups}.

The idea of the geometric assembly map  is due to Quinn~\cite{Quinn(1971),Quinn(1995a)}
and its algebraic counterpart was introduced by Ranicki~\cite{Ranicki(1992)}.

%%%%%%%%%%%%%%%%%%%%%%%%%%%%%%%%%%%%%%%%%%%%%%%%%%%%%%%%%%%%%%%%%%%%

\subsection{Conventions}
\label{subsec:Conventions_intro}

Throughout this paper $G$ denotes a (discrete) group. Ring means associative ring with unit.
All spectra are non-connective.

%%%%%%%%%%%%%%%%%%%%%%%%%%%%%%%%%%%%%%%%%%%%%%%%%%%%%%%%%%%%%%%%%%%%

\subsection{Acknowledgments}
\label{subsec:Acknowledgements_intro}

This paper is dedicated to Andrew Ranicki. It is financially supported by the ERC Advanced Grant
``KL2MG-interactions'' (no.  662400) of the author granted by the
European Research Council, and by the Cluster of Excellence
``Hausdorff Center for Mathematics'' at Bonn.

The paper is organized as follows: 
\tableofcontents

%%%%%%%%%%%%%%%%%%%%%%%%%%%%%%%%%%%%%%%%%%%%%%%%%%%%%%%%%%%%%%%%%%%%%%%%%%%%%%%%%
%%%%%%%%%%%%%%%%%%%%%%%     Some basic categories                         %%%%%%%%%%%%%%%%%%%%%%%%%%%%%
%%%%%%%%%%%%%%%%%%%%%%%%%%%%%%%%%%%%%%%%%%%%%%%%%%%%%%%%%%%%%%%%%%%%%%%%%%%%%%%%%

\typeout{------------------------------- Some basic categories     --------------------------------}

\section{Some basic categories}
\label{sec:Some_basic_categories}

In this section we recall  some well-known basic categories.

%%%%%%%%%%%%%%%%%%%%%%%%%%%%%%%%%%%%%%%%%%%%%%%%%%%%%%%%%%%%%%%%%%%%%%%%%%%%%%%%%

\subsection{$G$-$CW$-complexes}
\label{subsec:G-CW-complexes}

\begin{definition}[$G$-$CW$-complex]
\label{def:G-CW-complex}
A \emph{$G$-$CW$-complex}%
\index{G-CW-complex@$G$-$CW$-complex}
$X$ is a $G$-space together with a $G$-invariant filtration
$$\emptyset = X_{-1} \subseteq X_0 \subset X_1 \subseteq \ldots \subseteq
X_n \subseteq \ldots \subseteq\bigcup_{n \ge 0} X_n = X$$
such that $X$ carries the colimit topology with respect to this filtration
(i.e., a set $C \subseteq X$ is closed if and only if $C \cap X_n$ is closed
in $X_n$ for all $n \ge 0$) and $X_n$ is obtained
from $X_{n-1}$ for each $n \ge 0$ by attaching
equivariant $n$-dimensional cells, i.e., there exists
a $G$-pushout
\[
\xymatrix@!C=8em{\coprod_{i \in I_n} G/H_i \times S^{n-1} 
\ar[r]^-{\coprod_{i \in I_n} q_i^n} \ar[d]^-{} &
X_{n-1} \ar[d]^-{}  \\
\coprod_{i \in I_n} G/H_i \times D^{n}
\ar[r]^-{\coprod_{i \in I_n} Q_i^n} &
X_n
}
\]
\end{definition}

A map $f \colon X \to Y$ between $G$-$CW$-complexes is called \emph{cellular}
if $f(X_n) \subseteq Y_n$ holds for all $n \ge 0$. We denote by
$\GCW$ the category of $G$-$CW$-complexes with cellular $G$-maps as morphisms and by
$\GCWpairs$ the corresponding category of $G$-$CW$-pairs.
For basic information about $G$-$CW$-complexes we refer for instance
to~\cite[Chapter~1 and~2]{Lueck(1989)}.

%%%%%%%%%%%%%%%%%%%%%%%%%%%%%%%%%%%%%%%%%%%%%%%%%%%%%%%%%%%%%%%%%%%%%%%%%%%%%%%%%

\subsection{The orbit category}
\label{subsec:orbit_category}
The orbit category $\OrG$ has as objects homogeneous spaces $G/H$ and as morphisms
$G$-maps.  It can be viewed as the category of $0$-dimensional $G$-$CW$-complexes, whose
$G$-quotient space is connected. In particular we can think of $\OrG$ as a full
subcategory of $\GCW$.

%%%%%%%%%%%%%%%%%%%%%%%%%%%%%%%%%%%%%%%%%%%%%%%%%%%%%%%%%%%%%%%%%%%%%%%%%%%%%%%%%

\subsection{Spectra}
\label{subsec:spectra}

In this paper we can work with the most elementary category $\Spectra$ of spectra.
A \emph{spectrum}%
\index{spectrum}
$\mathbf{E} = \{(E(n),\sigma(n)) \mid n \in \IZ\}$ is a sequence of pointed spaces
$\{E(n) \mid n \in \IZ\}$ together with pointed maps
called \emph{structure maps}
$\sigma(n) \colon  E(n) \wedge S^1 \longrightarrow E(n+1)$.
A \emph{map of spectra}
$\bff \colon  \bfE \to \bfE^{\prime}$ is a sequence of maps
$f(n) \colon  E(n) \to E^{\prime}(n)$
which are compatible with the structure maps $\sigma(n)$, i.e., we have
$f(n+1) \circ \sigma(n)  = 
\sigma^{\prime}(n) \circ \left(f(n) \wedge \id_{S^1}\right)$
for all $n \in \IZ$.
Maps of spectra are sometimes called functions in the literature, they should not be confused 
with the notion of a map of spectra in
the stable category, see~\cite[III.2.]{Adams(1974)}. 

The homotopy groups of a spectrum%
\index{homotopy groups of a spectrum}
are defined by 
\begin{eqnarray}
\pi_i(\mathbf{E})%
& := &
\colim_{k \to \infty} \pi_{i+k}(E(k)),
\label{homotopy_groups_of_spectra}
\end{eqnarray}
where the $i$th structure map of the system $\pi_{i+k}(E(k))$ is given by the composite
\[
\pi_{i+k}(E(k)) \xrightarrow{S} \pi_{i+k+1}(E(k)\wedge S^1)
\xrightarrow{\sigma (k)_*} \pi_{i+k+1}(E(k +1))
\]
of the suspension homomorphism $S$ and the
homomorphism induced by the structure map.
A \emph{weak equivalence} of spectra is a map $\bff\colon \bfE \to \bfF$ 
of spectra inducing an isomorphism on all homotopy
groups.

%%%%%%%%%%%%%%%%%%%%%%%%%%%%%%%%%%%%%%%%%%%%%%%%%%%%%%%%%%%%%%%%%%%%%%%%%%%%%%%%%
%%%%%%%%%%%%%%%%%%%%%%%%%     $G$-homology theories and $\Or(G)-spectra   %%%%%%%%%%%%%%%%%%%%%
%%%%%%%%%%%%%%%%%%%%%%%%%%%%%%%%%%%%%%%%%%%%%%%%%%%%%%%%%%%%%%%%%%%%%%%%%%%%%%%%%

\typeout{----------------- $G$-homology theories and $\Or(G)$-spectra -----------------------------}

\section{$G$-homology theories and $\Or(G)$-spectra}
\label{sec:G-homology_theories_and_Or(G)-spectra}

Let $\Lambda$ be a commutative ring. Next we recall the obvious generalization of the notion of
a (generalized) homology theory to a $G$-homology theory.

\begin{definition}[$G$-homology theory]
\label{def:G-homology_theory}
A \emph{$G$-homology theory $\calh_*^G$%
\index{G-homology theory@$G$-homology theory}
with values
in $\Lambda$-modules}  is a collection of
covariant functors 
$\calh^G_n$
from the category $\GCWpairs$ of $G$-$CW$-pairs to the category of
$\Lambda$-modules indexed by $n \in \IZ$ together with natural transformations
\[
\partial_n^G(X,A)\colon  \calh_n^G(X,A) \to
\calh_{n-1}^G(A):= \calh_{n-1}^G(A,\emptyset)
\]
for $n \in \IZ$
such that the following axioms are satisfied:
\begin{itemize}

\item \emph{$G$-homotopy invariance} \\[1mm]
If $f_0$ and $f_1$ are $G$-homotopic $G$-maps of $G$-$CW$-pairs 
$(X,A) \to (Y,B)$, then $\calh_n^G(f_0) = \calh^G_n(f_1)$ for $n \in \IZ$;

\item \emph{Long exact sequence of a pair}\\[1mm]
Given a pair $(X,A)$ of $G$-$CW$-complexes,
there is a long exact sequence
\begin{multline*}
\quad \quad \quad \ldots \xrightarrow{\calh^G_{n+1}(j)}
\calh_{n+1}^G(X,A) \xrightarrow{\partial_{n+1}^G}
\calh_n^G(A) \xrightarrow{\calh^G_{n}(i)} \calh_n^G(X)
\\
\xrightarrow{\calh^G_{n}(j)} \calh_n^G(X,A)
\xrightarrow{\partial_n^G} \ldots,
\end{multline*}
where $i\colon  A \to X$ and $j\colon X \to (X,A)$ are the inclusions;

\item \emph{Excision} \\[1mm]
Let $(X,A)$ be a $G$-$CW$-pair and let
$f\colon  A \to B$ be a cellular $G$-map of
$G$-$CW$-complexes. Equip $(X\cup_f B,B)$ with the induced structure
of a $G$-$CW$-pair. Then the canonical map
$(F,f)\colon  (X,A) \to (X\cup_f B,B)$ induces an isomorphism
\[
\calh_n^G(F,f)\colon  \calh_n^G(X,A) \xrightarrow{\cong}
\calh_n^G(X\cup_f B,B);
\]

\item \emph{Disjoint union axiom}\\[1mm]
Let $\{X_i\mid i \in I\}$ be a family of
$G$-$CW$-complexes. Denote by
$j_i\colon  X_i \to \coprod_{i \in I} X_i$ the canonical inclusion.
Then the map
\[
\bigoplus_{i \in I} \calh^G_{n}(j_i)\colon  \bigoplus_{i \in I} \calh_n^G(X_i)
\xrightarrow{\cong} \calh_n^G\left(\coprod_{i \in I} X_i\right)
\]
is bijective.

\end{itemize}
\end{definition}

If $\bfE$ is a spectrum, then one gets  a (non-equivariant) homology
theory $H_*(-;\bfE)$ by defining
\[
  H_n(X,A;\bfE) = \pi_n\left((X_+ \cup_{A_+} \cone(A_+)) \wedge \bfE
  \right)
\]
for a $CW$-pair $(X,A)$ and $n \in \IZ$, where $X_+$ is obtained from $X$ by adding a
disjoint base point and $\cone$ denotes the (reduced) mapping cone.  Its main property is
$H_n(\pt;\bfE) = \pi_n(\bfE)$. This extends to $G$-homology theories as follows. Since the
building blocks of $G$-spaces are homogeneous spaces, we will have to consider 
a covariant $\Or(G)$-spectrum, i.e., a covariant functor $\bfE^G \colon \Or(G) \to \Spectra$,
instead of a  spectrum.

\begin{definition}[Excisive]
\label{def:excisive}
We call a covariant functor
\[
\bfE \colon  \GCWpairs \to \Spectra
\]
\emph{homotopy invariant}
if it sends $G$-homotopy equivalences to
weak homotopy equivalences of spectra.

The functor $\bfE$ is
\emph{excisive}%
\index{excisive}
if it has the following four properties:

\begin{itemize}
\item It is  homotopy invariant;

\item The spectrum $\bfE (\emptyset)$ is weakly contractible;

\item It respects homotopy pushouts up to
weak homotopy equivalence, i.e.,
if the $G$-$CW$-complex $X$ is the union of
$G$-$CW$-subcomplexes $X_1$ and $X_2$ with
intersection $X_0$, then the canonical
map from the homotopy pushout of
$\bfE(X_2) \longleftarrow \bfE(X_0)
\longrightarrow \bfE(X_2)$  to
$\bfE(X)$ is a weak homotopy equivalence of spectra;

\item It respects 
disjoint unions up to weak homotopy, i.e., the natural map
$\bigvee_{i \in I} \bfE(X_i) \to \bfE(\coprod_{i \in I} X_i)$
is a  weak homotopy equivalence for all
index sets $I$. 
\end{itemize}
\end{definition}

One easily checks

\begin{lemma}\label{lem:G_homolog_theory_coming_from_excisive_spectrum}
Suppose that the covariant functor $\bfE \colon  \GCWpairs \to \Spectra$ is excisive.
Then we obtain a $G$-homology theory with values in $\IZ$-modules
 by assigning to $G$-$CW$-pair $(X,A)$ and $n \in \IZ$ the abelian group
$\pi_n(\bfE(X,A))$.
\end{lemma}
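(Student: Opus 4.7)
The plan is to set $\calh_n^G(X,A) := \pi_n(\bfE(X,A))$ and verify the four axioms together with the boundary operator. The key observation, which makes everything work, is that the pair-level spectrum $\bfE(X,A)$ is naturally weakly equivalent to the homotopy cofiber of $\bfE(A) \to \bfE(X)$ in $\Spectra$. Such a cofiber description follows by combining the pushout axiom applied to the $G$-pushout $X \cup_A \cone(A)$ with the pair-level functoriality of $\bfE$ and standard manipulations in the stable category, using $\bfE(\emptyset) \simeq *$ and the $G$-contractibility of $\cone(A)$ to handle basepoints. Granted this, the boundary operator $\partial_n^G$ is induced by the natural map $\bfE(X,A) \to \Sigma\bfE(A)$ in the cofiber sequence of spectra, and the long exact sequence of a pair reduces to the long exact sequence of homotopy groups of the associated cofiber sequence $\bfE(A) \to \bfE(X) \to \bfE(X,A)$.

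$G$-homotopy invariance then follows at once from the homotopy invariance of $\bfE$: $G$-homotopic maps of pairs induce weakly equivalent maps on absolute values and hence equivalent maps on the cofibers. The disjoint union axiom follows from the wedge axiom of $\bfE$, the compatibility of homotopy cofibers with wedges, and the standard identification $\pi_n\bigl(\bigvee_i \bfE_i\bigr) \cong \bigoplus_i \pi_n(\bfE_i)$ in $\Spectra$. For excision, given a cellular $G$-map $f \colon A \to B$, the pushout axiom of $\bfE$ transforms the $G$-pushout square
\[
\xymatrix{A \ar[r]^-{f} \ar[d] & B \ar[d] \\ X \ar[r] & X \cup_f B}
\]
into a homotopy pushout in $\Spectra$; the general fact that in such a homotopy pushout the two induced maps on parallel cofibers are weak equivalences then yields $\mathrm{cofib}(\bfE(A) \to \bfE(X)) \simeq \mathrm{cofib}(\bfE(B) \to \bfE(X \cup_f B))$, which via the cofiber identification produces the weak equivalence $\bfE(X,A) \xrightarrow{\simeq} \bfE(X \cup_f B, B)$, hence the excision isomorphism on $\pi_n$.

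The main technical obstacle will be the cofiber identification itself, which encodes how the pair-level values of $\bfE$ are determined by its absolute values; it requires threading an excision-type argument (combining the pushout axiom, homotopy invariance, and $\bfE(\emptyset) \simeq *$) through the pair-level functoriality of $\bfE$. Once this identification is in hand, the remaining verifications are formal manipulations in the stable category of spectra, justifying the colloquial phrase \emph{one easily checks}.
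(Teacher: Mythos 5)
Your overall plan is the intended one (the paper offers no proof beyond ``one easily checks''), and the derivations of homotopy invariance, the disjoint union axiom and excision from the cofibre description are fine. The genuine gap is precisely the step you flag as the ``main technical obstacle'' and then claim follows from the axioms: the identification of $\bfE(X,A)$ with the homotopy cofibre of $\bfE(A)\to\bfE(X)$ is \emph{not} a consequence of Definition~\ref{def:excisive}. All four conditions there constrain only the values of $\bfE$ on absolute objects $(Y,\emptyset)$; pair-level functoriality by itself supplies maps such as $\bfE(X)\to\bfE(X,A)$ but no excision statement for pairs, so there is nothing to ``thread through''. Concretely, if $\bfE$ is excisive then so is $\bfE'(X,A):=\bfE(X)$, and $\pi_n(\bfE'(X,A))$ is not a $G$-homology theory: for the trivial group and $A=\pt\sqcup\pt\subseteq X=[0,1]$, exactness of the pair sequence at $\pi_n(\bfE(X))$ would force $\pi_n(\bfE(\pt))=0$ for all $n$. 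So the cofibre identification must be an additional input, not a deduction. It does hold for the functors to which the lemma is actually applied, namely $(\bfE^G)_{\%}$ of~\eqref{E_procent} and $\bfE^{\%}$ of Theorem~\ref{the:approximation_by_an_excisive_functor}, because there the value on a pair is by construction a reduced construction applied to the pointed mapping cone $X_+\cup_{A_+}\cone(A_+)$; under that reading your argument goes through.

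A secondary inaccuracy sits in the same step: the homotopy pushout axiom applied to $X\cup_A\cone(A)$ identifies $\bfE(X\cup_A\cone(A))$ with the homotopy pushout of $\bfE(X)\leftarrow\bfE(A)\to\bfE(\cone(A))$, and homotopy invariance gives $\bfE(\cone(A))\simeq\bfE(G/G)$, which need not be contractible. So the unreduced cone does not directly produce the cofibre of $\bfE(A)\to\bfE(X)$. One must first pass to the reduced theory $\widetilde{\bfE}(Y)=\operatorname{hocofib}\bigl(\bfE(G/G)\to\bfE(Y)\bigr)$, or equivalently work with $X_+\cup_{A_+}\cone(A_+)$ and invoke the disjoint union axiom to handle the added basepoint, before the cofibre sequence appears.
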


A $G$-space $X$ defines a contravariant $\Or(G)$-space
$O^G(X)$ by sending $G/H$ to $\map_G(G/H,X) = X^H$.
Given a contravariant pointed $\Or(G)$-space $Y$ and a covariant pointed $\Or(G)$-space
$Z$, there is the pointed space $Y \wedge_{\Or(G)} Z$.  Its construction is explained for
instance in~\cite[Section~1]{Davis-Lueck(1998)}. This construction is natural in $Y$ and $Z$.  Its
main property is that one obtains for every pointed space $X$ an adjunction homeomorphism
\[
\map(Y \wedge_{\Or(G)} Z,X) \xrightarrow{\cong} \mor_{\Or(G)}(Y,\map(Z,X))
\]
where the source is the pointed mapping space and the target is the topological space of
natural transformations from $Y$ to the contravariant pointed $\Or(G)$-space $\map(Z,X)$
sending $G/H$ to the pointed mapping space $\map(Z(G/H),X)$.  If $\bfE^G$ is a covariant
$\Or(G)$-spectrum, then one obtains a spectrum $Y \wedge_{\Or(G)} \bfE^G$. Hence we can
extend a covariant functor $\bfE^G \colon \OrG \to \Spectra$ to a covariant functor
\begin{equation}
(\bfE^G)_{\%} \colon \GCWpairs \to \Spectra, \quad (X,A) 
\mapsto O^G(X_+ \cup_{A_+} \cone(A_+)) \wedge_{\Or(G)} \bfE^G.
\label{E_procent}
\end{equation}

The easy proofs of the following two results are left to the reader.

\begin{lemma}\label{lem:extending_from_Or(G)_to_excisive_functor}
If $\bfE^G$ is a covariant $\Or(G)$-spectrum, then $(\bfE^G)_{\%}$ is excisive and we obtain a 
$G$-homology theory $H_*^G(-;\bfE^G)$ by
\[
H_n^G(X,A;\bfE^G) = \pi_n((\bfE^G)_{\%}(X,A)) = \pi_n\bigl(O^G(X_+ \cup_{A_+} \cone(A_+)) \wedge_{\Or(G)} \bfE^G\bigr)
\]
satisfying $H_n^G(G/H;\bfE^G) = \pi_n(\bfE^G(G/H))$ for $n \in \IZ$ and $H \subseteq G$.
\end{lemma}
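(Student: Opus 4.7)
The plan is to verify the four defining properties of an excisive functor for $(\bfE^G)_{\%}$ and then invoke Lemma~\ref{lem:G_homolog_theory_coming_from_excisive_spectrum} to obtain a $G$-homology theory. The coefficient computation $H_n^G(G/H;\bfE^G) = \pi_n(\bfE^G(G/H))$ will follow from an essentially Yoneda-type calculation of the smash product over $\Or(G)$ at representable objects.

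First, I would unpack the construction. For a $G$-$CW$-pair $(X,A)$ set $C(X,A) := X_+ \cup_{A_+} \cone(A_+)$; this is a pointed $G$-$CW$-complex, and taking fixed points gives the contravariant pointed $\Or(G)$-space $O^G(C(X,A))$ whose value at $G/H$ is $C(X,A)^H = C(X^H,A^H)$. Homotopy invariance then reduces to the observation that a $G$-homotopy equivalence of pairs induces a levelwise pointed homotopy equivalence of $\Or(G)$-$CW$-complexes at the level of $O^G(C(-,-))$, and the functor $(-) \wedge_{\Or(G)} \bfE^G$ sends levelwise weak equivalences of pointed free $\Or(G)$-$CW$-complexes to weak equivalences of spectra (compare \cite[Section~1]{Davis-Lueck(1998)}). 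The vanishing on the empty pair is immediate since $C(\emptyset,\emptyset)$ is a one-point space, whence $O^G(C(\emptyset,\emptyset))$ is the trivial pointed $\Or(G)$-space and its smash with $\bfE^G$ is a trivial spectrum.

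Next I would check the pushout and coproduct axioms. Both follow from two facts: the functor $X \mapsto O^G(X) = X^{(-)}$ preserves pushouts along cellular inclusions and preserves arbitrary disjoint unions, since taking $H$-fixed points commutes with these constructions on $G$-$CW$-complexes; and the bifunctor $(-) \wedge_{\Or(G)} \bfE^G$ preserves (homotopy) pushouts and wedges of pointed free $\Or(G)$-$CW$-complexes. Combined with the fact that cellular pushout squares of $G$-$CW$-complexes are in particular homotopy pushouts, this delivers the excisiveness of $(\bfE^G)_{\%}$. With excisiveness in hand, Lemma~\ref{lem:G_homolog_theory_coming_from_excisive_spectrum} produces the $G$-homology theory $H_*^G(-;\bfE^G)$.

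Finally, for $(X,A) = (G/H,\emptyset)$ the pointed $\Or(G)$-space $O^G((G/H)_+)$ is, after removing the added basepoint, the representable presheaf $\mor_{\Or(G)}(-,G/H)$. A direct inspection of the coend defining $\wedge_{\Or(G)}$, or equivalently the adjunction displayed just before~\eqref{E_procent}, yields a natural weak equivalence $O^G((G/H)_+) \wedge_{\Or(G)} \bfE^G \simeq \bfE^G(G/H)$; this is the Yoneda reduction. Taking $\pi_n$ gives the claimed coefficient formula. The main obstacle I anticipate is bookkeeping rather than conceptual: one must make the pushout and coproduct arguments compatible with the cofibrancy behaviour of $O^G(-)$ and the smash over $\Or(G)$, which hinges on knowing that $O^G(C(X,A))$ is a free pointed $\Or(G)$-$CW$-complex, so that $(-)\wedge_{\Or(G)}\bfE^G$ is homotopically well-behaved. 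This is the usual subtlety in the Davis--L\"uck framework and where I expect to spend the most care.
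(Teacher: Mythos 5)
Your proof is correct and is essentially the argument the paper has in mind: the paper explicitly leaves this lemma to the reader, and the intended verification is exactly the Davis--L\"uck one you outline (fixed points commute with the relevant colimits, $O^G$ lands in free $\Or(G)$-$CW$-complexes so that $-\wedge_{\Or(G)}\bfE^G$ is homotopically well-behaved, and Yoneda reduction at representables gives the coefficients). The only cosmetic remark is that the Yoneda reduction $\mor_{\Or(G)}(-,G/H)_+\wedge_{\Or(G)}\bfE^G \cong \bfE^G(G/H)$ is in fact a natural homeomorphism, not merely a weak equivalence.
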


\begin{lemma}\label{lem:transformation_of_G-homology_theories} Let
  $\bft \colon \bfE \to \bfF$ be a natural transformation of covariant functors
  $\GCW^2 \to \Spectra$.  Suppose that $\bfE$ and $\bfF$ are excisive and $\bft(G/H)$ is a
  weak homotopy equivalence for any homogeneous $G$-space $G/H$.

  Then $\bft(X,A) \colon \bfE(X,A) \to \bfF(X,A)$ is a weak homotopy equivalence for every
  $G$-$CW$-pair $(X,A)$.
\end{lemma}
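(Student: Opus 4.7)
The proof plan proceeds by three successive reductions, exploiting the four clauses of excisiveness in turn.

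First, I would reduce from pairs to absolute $G$-$CW$-complexes. Since both $\bfE$ and $\bfF$ are excisive, Lemma~\ref{lem:G_homolog_theory_coming_from_excisive_spectrum} yields $G$-homology theories $\pi_*(\bfE(-))$ and $\pi_*(\bfF(-))$, and $\bft$ induces a transformation between them. Their long exact sequences for the pair $(X,A)$ are intertwined by $\bft_*$. By the five lemma, once we know $\bft(X)$ and $\bft(A)$ are weak equivalences, so is $\bft(X,A)$. Thus it suffices to treat the absolute case, i.e., pairs of the form $(X,\emptyset)$.

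Second, for absolute $G$-$CW$-complexes I would induct on dimension. For $n=0$, the space $X$ is a disjoint union $\coprod_i G/H_i$, so the disjoint union axiom reduces to the hypothesis on each $G/H_i$. For the inductive step, assume the result for all $G$-$CW$-complexes of dimension at most $n-1$, and let $X$ have dimension $n$. The cell attachment square from Definition~\ref{def:G-CW-complex} is a homotopy pushout, because its left-hand vertical map is a cofibration assembled from the standard cofibrations $G/H_i \times S^{n-1} \hookrightarrow G/H_i \times D^n$. Since $\bfE$ and $\bfF$ are excisive, they convert this to homotopy pushouts of spectra, and $\bft$ induces a map between them. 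On $\coprod_i G/H_i \times D^n$, homotopy invariance together with the $G$-homotopy equivalence to $\coprod_i G/H_i$ and the disjoint union axiom reduces to the hypothesis. On $\coprod_i G/H_i \times S^{n-1}$ and on $X_{n-1}$, the spaces have dimension at most $n-1$, so the induction hypothesis applies. Hence the three corners carry weak equivalences, and the induced map of homotopy pushouts of spectra is a weak equivalence, proving $\bft(X_n)$ is a weak equivalence.

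Third, for infinite-dimensional $X = \bigcup_n X_n$, I would use the standard mapping telescope argument. The space $X$ is $G$-homotopy equivalent to its telescope $\operatorname{tel}(X_n)$, which in turn can be written as the homotopy pushout of
\[
\coprod_n X_{2n+1} \longleftarrow \coprod_n X_n \longrightarrow \coprod_n X_{2n}
\]
after re-indexing (alternatively, use two disjoint unions of mapping cylinders glued along a common disjoint union). By homotopy invariance, the pushout axiom, and the disjoint union axiom applied to both $\bfE$ and $\bfF$, the value of each functor on $X$ is the homotopy pushout of its values on these disjoint unions. Each $X_n$ is finite-dimensional, so the previous step gives $\bft(X_n)$ a weak equivalence; the disjoint union axiom promotes this to the disjoint unions, and the pushout axiom then gives $\bft(X)$ a weak equivalence.

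The main obstacle is the infinite-dimensional passage in the third step: the finite-dimensional induction uses the excisiveness axioms in a very direct way, but assembling $X$ from its skeleta requires the telescope trick to translate the filtered colimit into a homotopy pushout of disjoint unions, which is the only shape of homotopy colimit that the axioms of Definition~\ref{def:excisive} directly control.
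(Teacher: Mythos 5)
Your proof is correct and is exactly the standard argument the paper has in mind when it declares this proof ``easy'' and leaves it to the reader: reduce to the absolute case by the five lemma applied to the long exact sequences of the pair, induct over skeleta using the disjoint union and homotopy pushout axioms together with $G$-homotopy invariance for the free part $\coprod_i G/H_i\times D^n \simeq_G \coprod_i G/H_i$, and pass to infinite-dimensional complexes via the mapping telescope, which repackages the colimit over skeleta as a homotopy pushout of disjoint unions of finite-dimensional pieces. Your closing remark correctly identifies the telescope step as the one place where some care is needed, since Definition~\ref{def:excisive} controls only pushouts and disjoint unions, not arbitrary filtered colimits.
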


%%%%%%%%%%%%%%%%%%%%%%%%%%%%%%%%%%%%%%%%%%%%%%%%%%%%%%%%%%%%%%%%%%%%%%%%%%%%%%%%%
%%%%%%%%%%%%%%%%%%%%%    Approximation by an excisive functor   %%%%%%%%%%%%%%%%%%%%%%%%%%%%%%
%%%%%%%%%%%%%%%%%%%%%%%%%%%%%%%%%%%%%%%%%%%%%%%%%%%%%%%%%%%%%%%%%%%%%%%%%%%%%%%%%

\typeout{------------------------------- Approximation by an excisive functor --------------------------------}

\section{Approximation by an excisive functor}
\label{sec:Approximation_by_an_excisive_functor}

The following result follows from~\cite[Theorem~6.3]{Davis-Lueck(1998)}. Its non-equivariant version is due to
Weiss-Williams~\cite{Weiss-Williams(1995a)}.

\begin{theorem}[Approximation by an excisive functor] 
\label{the:approximation_by_an_excisive_functor}
\index{approximation by an excisive functor}
Let $\bfE \colon \GCWpairs \to \Spectra$ be a covariant functor which is homotopy
invariant.  Let $\bfE| \colon \OrG \to \Spectra$ be its composite with the obvious
inclusion $\OrG \to \GCWpairs$.

Then there exists a covariant functor
\[
\bfE^{\%} \colon \GCWpairs \to \Spectra
\]
and natural transformations
\begin{eqnarray*} 
\bfA_{\bfE} \colon \bfE^{\%}  & \to & \bfE;
\\
\bfB_{\bfE} \colon \bfE^{\%}  & \to & \bfE|_{\%},
\end{eqnarray*}
satisfying:

\begin{enumerate}

\item\label{the:approximation_by_an_excisive_functor;exercisive}
The functor $\bfE^{\%}$ is excisive;

\item\label{the:approximation_by_an_excisive_functor:upper} 
The map   $\bfA_{\bfE}(G/H) \colon \bfE^{\%}(G/H) \to \bfE(G/H)$ is a weak homotopy equivalence
  for every homogeneous space $G/H$;

\item\label{the:approximation_by_an_excisive_functor:lower}
The map $\bfB_{\bfE}(X,A) \colon \bfE^{\%}(X,A)   \to \bfE|_{\%}(X,A)$ is a weak homotopy equivalence
for every $G$-$CW$-pair $(X,A)$;

\item\label{the:approximation_by_an_excisive_functor:charcterization}
The functor $\bfE$ is excisive if and only if $\bfA_{\bfE}(X,A)$ is a weak homotopy equivalence 
for every $G$-$CW$-pair $(X,A)$;

\item\label{the:approximation_by_an_excisive_functor:naturality}
The transformations $\bfA_{\bfE}$ and $\bfB_{\bfE}$ are functorial in $\bfE$.

\end{enumerate}
\end{theorem}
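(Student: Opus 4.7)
The idea is to construct $\bfE^{\%}$ as a two-sided (homotopy) coend sitting between the strict coend $\bfE|_{\%}$ and the original functor $\bfE$. For a $G$-$CW$-complex $Y$, let
\[
\bfE^{\%}(Y) := |B_\bullet(O^G(Y_+), \OrG, \bfE|)|
\]
be the geometric realization of the two-sided bar construction whose $n$-simplices are the wedge, over composable chains $G/H_0 \to G/H_1 \to \cdots \to G/H_n$ in $\OrG$, of $O^G(Y_+)(G/H_n) \wedge \bfE(G/H_0)$, with standard face and degeneracy maps. Extend to $G$-$CW$-pairs by setting $\bfE^{\%}(X,A) := \bfE^{\%}(X_+ \cup_{A_+} \cone(A_+))$, matching the formula used for $\bfE|_{\%}$. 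The transformation $\bfB_{\bfE}$ is the canonical projection from the homotopy coend $|B_\bullet|$ onto its strict coend $O^G((X,A)_+) \wedge_{\OrG} \bfE|$, which is $\bfE|_{\%}(X,A)$. The transformation $\bfA_{\bfE}$ is the augmentation induced by genuine functoriality of $\bfE$ on $\GCWpairs$: a $G$-map $\alpha \colon G/H \to (X,A)_+$ together with $s \in \bfE(G/H)$ is sent to $\bfE(\alpha)(s) \in \bfE((X,A)_+)$, composed with the natural homotopy equivalence $\bfE((X,A)_+) \xrightarrow{\simeq} \bfE(X,A)$ obtained from collapsing the cone, which exists by homotopy invariance of $\bfE$.

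For part~\ref{the:approximation_by_an_excisive_functor;exercisive}, excisiveness of $\bfE^{\%}$ follows from the fact that $B_\bullet$ is a homotopy colimit in the $G$-space variable: $O^G$ sends $G$-$CW$ pushouts to levelwise homotopy pushouts of pointed $\OrG$-spaces (the fixed-point functors $(-)^H$ preserve pushouts of $G$-$CW$-complexes), carries disjoint unions to wedges and $G$-homotopies to homotopies, and geometric realization of simplicial spectra preserves these homotopy colimits. For~\ref{the:approximation_by_an_excisive_functor:upper}, take $X = G/H$ and $A = \emptyset$, so that $O^G(G/H_+)$ is the pointed representable $\OrG$-space $\mor_{\OrG}(-,G/H)_+$; a standard extra-degeneracy/co-Yoneda argument exhibits $B_\bullet(\mor_{\OrG}(-,G/H)_+, \OrG, \bfE|)$ as simplicially homotopy equivalent to the constant simplicial spectrum $\bfE(G/H)$, with the augmentation realizing $\bfA_{\bfE}(G/H)$. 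For~\ref{the:approximation_by_an_excisive_functor:lower}, because $(X,A)$ is a $G$-$CW$-pair the contravariant pointed $\OrG$-space $O^G((X,A)_+)$ is built cell-by-cell from representable $\OrG$-spaces and is therefore cofibrant in the projective model structure, so the canonical projection from the homotopy coend to the strict coend, which is $\bfB_{\bfE}(X,A)$, is a weak equivalence of spectra.

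Part~\ref{the:approximation_by_an_excisive_functor:charcterization} then follows formally. The implication ``$\Leftarrow$'' is immediate because a pointwise weak equivalence $\bfA_{\bfE} \colon \bfE^{\%} \xrightarrow{\simeq} \bfE$ transports excisiveness from $\bfE^{\%}$ to $\bfE$. Conversely, if $\bfE$ is excisive, then both $\bfE^{\%}$ and $\bfE$ are excisive by (\ref{the:approximation_by_an_excisive_functor;exercisive}) and the assumption, and $\bfA_{\bfE}(G/H)$ is a weak equivalence for every $G/H$ by (\ref{the:approximation_by_an_excisive_functor:upper}), so Lemma~\ref{lem:transformation_of_G-homology_theories} upgrades $\bfA_{\bfE}$ to a weak equivalence on every $G$-$CW$-pair. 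Naturality~\ref{the:approximation_by_an_excisive_functor:naturality} is built into the construction: a natural transformation $\bfE \to \bfE'$ enters $B_\bullet$ through its right-hand slot, and commutes with the augmentation by functoriality, so both $\bfA_{\bfE}$ and $\bfB_{\bfE}$ are natural in $\bfE$.

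The main technical obstacle is the homotopy-pushout clause of excisiveness in~\ref{the:approximation_by_an_excisive_functor;exercisive} together with the cofibrancy claim in~\ref{the:approximation_by_an_excisive_functor:lower}. Both require careful bookkeeping with the projective model structures on pointed $\OrG$-spaces and on $\OrG$-spectra, and with the behaviour of geometric realization of simplicial spectra under smash products with cofibrant pointed $\OrG$-spaces. This is exactly the content of the cited Theorem~6.3 of Davis--L\"uck, which in turn draws on the non-equivariant framework of Weiss--Williams; the rest of the argument above is formal consequences of that technical core and of Lemmas~\ref{lem:extending_from_Or(G)_to_excisive_functor} and~\ref{lem:transformation_of_G-homology_theories}.
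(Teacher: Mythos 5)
There is a genuine gap, and it is precisely the one the paper warns about. Your construction of $\bfE^{\%}$ as the two-sided bar construction $|B_\bullet(O^G(Y_+),\OrG,\bfE|)|$ and of $\bfB_{\bfE}$ as the projection onto the strict coend is fine, and your arguments for parts~\eqref{the:approximation_by_an_excisive_functor;exercisive}, \eqref{the:approximation_by_an_excisive_functor:lower}, \eqref{the:approximation_by_an_excisive_functor:charcterization} and~\eqref{the:approximation_by_an_excisive_functor:naturality} would go through once the construction is in place. The problem is the augmentation $\bfA_{\bfE}$. On zero-simplices it must send a point $(\alpha,s)$ with $\alpha \in O^G(Y_+)(G/H) = \map_G(G/H,Y_+)= (Y_+)^H$ and $s \in E(G/H)_n$ to $E(\alpha)_n(s)$. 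Since $(Y_+)^H$ carries the subspace topology of $Y_+$ and is not discrete for a general $G$-$CW$-complex $Y$, continuity of this assignment in the variable $\alpha$ requires that $\alpha \mapsto E(\alpha)_n$ be a continuous map $\map_G(G/H,Y_+) \to \map(E(G/H)_n,E(Y_+)_n)$, i.e., that $\bfE$ be a \emph{continuous} functor. The theorem only assumes homotopy invariance, and the functors one actually cares about ($\bfK_R$, $\bfL_R^{\langle -\infty\rangle}$, $\bfA$, \dots) are not continuous. This is exactly the ``naive suggestion'' the paper describes and rejects for the same reason: only the target $\bfE|_{\%}$ of $\bfB_{\bfE}$ is unproblematic, while the map to $\bfE$ itself is the hard part. (Your extra-degeneracy argument for part~\eqref{the:approximation_by_an_excisive_functor:upper} survives only because $\mor_{\OrG}(-,G/H)$ happens to be discrete for discrete $G$.)

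The actual construction circumvents this by discretizing the offending variable: one replaces the topological mapping spaces by the simplicial sets $(G/H,[p]) \mapsto \map_G(G/H\times\Delta_p,X)$ and uses free (CW-)resolutions of contravariant functors on $\OrG\times{\boldmath \Delta}$; homotopy invariance of $\bfE$ is then what makes the comparison maps over the $\Delta_p$-direction into weak equivalences. So the bar construction must be taken over a discrete indexing category in both the orbit and the simplex direction, with $\bfA_{\bfE}$ assembled from the individual maps $\bfE(i[\sigma])$ rather than from a continuously varying family. To repair your proof you would either have to add continuity of $\bfE$ as a hypothesis (which defeats the purpose) or rebuild the left slot of your bar construction simplicially as in \cite{Davis-Lueck(1998)}.
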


Although one does not need to understand the explicite construction of $\bfE^{\%}$,
$\bfA_{\bfE}$ and $\bfB_{\bfE}$ and the proof of
Theorem~\ref{the:approximation_by_an_excisive_functor} for the applications of
Theorem~\ref{the:approximation_by_an_excisive_functor}  and for the reminder of
this paper, we make some comments about it for the interested reader.

As an illustration we firstly present a naive suggestion in the non-equivariant case,
which turns out to require too restrictive assumptions on $\bfE$ and  therefore will not be
the final solution, but conveys a first idea. Namely, we can define a map of pointed sets
$X_+ \wedge \bfE(\pt) \to \bfE(X)$ for a $CW$-complex $X$ by sending an element in the
target represented by $(x,e)$ for $x \in X$ and $e \in \bfE(\pt)$ to
$\bfE(c_x \colon \pt \to X)(e)$, where $c_x \colon \pt \to X$ is the constant map with
value $x$.  The problem is that the only reasonable way of ensuring the continuity of
this map is to require that $\bfE$ itself is continuous, i.e., the map
$\map(X,Y) \to\map(E(X)_n,E(Y)_n)$ sending $f$ to $E(f)_n$ has to be continuous for all
$n \in \IZ$. But this assumption is not satisfied for the functors $\bfE$ which are of
interest for us and will be considered below.

The solution is to take homotopy invariance into account and to work simplicially.
Let us consider the special case, where $G$ is trivial and $X$ is a simplicial complex.
For any simplex $\sigma$ of $X$ we have the inclusion $i[\sigma] \colon \sigma \to X$
and can therefore define maps
\begin{eqnarray*}
&
A_{\bfE}[\sigma]_n \colon \sigma_+ \wedge E(\sigma)_n 
\xrightarrow{\pr_+ \wedge \id_{E(\sigma)_n}} \pt_+ \wedge E(\sigma)_n
= E(\sigma)_n \xrightarrow{E(i[\sigma])_n} E(X)_n;
&
\\
&
B_{\bfE}[\sigma]_n \colon \sigma_+ \wedge E(\sigma)_n 
\xrightarrow{\id_{\sigma_+} \wedge E(\pr)_n} \sigma_+ \wedge E(\pt)_n,
&
\end{eqnarray*}
where $\pr$ denotes the projection onto $\pt$.
Now define a space $E^{\%}(X)_n$ by glueing the spaces $\sigma_+ \wedge E(\sigma)_n$ for
$\sigma$ running over the simplices of $X$ together according to the simplicial structure,
more precisely, for an inclusion $j \colon \tau \to \sigma$ of simplices we identify a
point in $\tau_+ \wedge E(\tau)_n$ with its image in $\sigma_+ \wedge E(\sigma)_n$ under
the obvious map $j_+ \wedge E(j)_n$. One easily checks that the various maps $A_{\bfE}[\sigma]_n$
and $B_{\bfE}[\sigma]_n$ fit together to maps of pointed spaces
\begin{eqnarray*}
A_{\bfE}(X)_n \colon E^{\%}(X)_n  & \to & E(X)_n;
\\
B_{\bfE}(X)_n \colon E^{\%}(X)_n & \to & E|_{\%}(X)_n := X_+ \wedge E(\pt)_n,
\end{eqnarray*}
and thus to maps of spectra
\begin{eqnarray*}
\bfA_{\bfE}(X) \colon \bfE^{\%}(X)  & \to & \bfE(X)_n;
\\
\bfB_{\bfE}(X) \colon \bfE^{\%}(X) & \to & \bfE|_{\%}(X) := X_+ \wedge \bfE(\pt).
\end{eqnarray*}
Notice that each map $\bfE(\pr) \colon \bfE(\sigma) \to \bfE(\pt)$ is by assumption a weak
homotopy equivalence.  This implies that
$\bfB_{\bfE}(X) \colon \bfE^{\%}(X) \to \bfE|_{\%}(X)$ is a weak homotopy equivalence.
Since the functor $\bfE_{\%}$ is excisive, the functor $\bfE^{\%}$ is excisive.  If
$X = \pt$, the map $\bfA_{\bfE}(\pt) \colon \bfE^{\%}(\pt) \to \bfE(\pt)$ is an
isomorphism and in particular a weak homotopy equivalence.

Now we see, where the name assembly map comes from. In the case of a
simplicial complex $X$ we want to assemble $\bfE(X)$ by its values $\bfE(\sigma)$ for the
various simplices of $X$, which leads to the definition of $\bfE^{\%}(X)$. Intuitively
it is clear that $\bfE(X)$ carries the same information as $\bfE|^{\%}(X)$ if and only
if $\bfE$ is excisive since the condition excisive allows to compute the values of $\bfE$
on $X$ by its values on the simplices taking into account how the simplices are glued
together to yield $X$.

Finally one wants a definition that is independent of the simplicial structure and
actually applies to more general spaces $X$ than simplicial complexes. Therefore one uses
simplicial sets and in particular the singular simplicial set $S.X$, Recall that $S.X$ is the functor
from the category of finite ordered sets ${\boldmath \Delta}$ to the category of sets
$\Sets$ sending the finite ordered set $[p]$ to the set $\map(\Delta_p,Y)$ for $\Delta_p$
the standard $p$-simplex.  For the equivariant version one has to bring the orbit category
into play. So one considers for a $G$-space $X$ the functor
\[
\OrG \times {\boldmath \Delta} \to \Sets, \quad ( G/H,[p]) \mapsto \map_G(G/H \times \Delta_p,X).
\]
In some sense on uses free resolution of contravariant functors
$\OrG \times {\boldmath \Delta} \to \Spaces$ to get the right construction of $\bfE^{\%}$
and of the desired transformations $\bfA_{\bfE}$ and $\bfB_{\bfE}$ so that the claims
appearing in Theorem~\ref{the:approximation_by_an_excisive_functor} can be proved.
Details can be found in~\cite{Davis-Lueck(1998)}.

%%%%%%%%%%%%%%%%%%%%%%%%%%%%%%%%%%%%%%%%%%%%%%%%%%%%%%%%%%%%%%%%%%%%%%%%%%%%%%%%%
%%%%%%%%%%%%%%%%%%%%%%%     The universal property                        %%%%%%%%%%%%%%%%%%%%%%%%%%%%%
%%%%%%%%%%%%%%%%%%%%%%%%%%%%%%%%%%%%%%%%%%%%%%%%%%%%%%%%%%%%%%%%%%%%%%%%%%%%%%%%%

\typeout{-------------------------------  The universal property   --------------------------------}

\section{The universal property}
\label{sec:The_universal_property}

\index{assembly map!universal property}
Next we explain why Theorem~\ref{the:approximation_by_an_excisive_functor}
characterizes the assembly map in the sense that
$\bfA_{\bfE} \colon  \bfE^{\%} \longrightarrow \bfE$
is the universal approximation from
the left by an excisive
functor  of a homotopy invariant functor $\bfE \colon \GCWpairs \to \Spectra$.
Namely, let $\bfT \colon  \bfF \to  \bfE$
be a transformation of  covariant  functors $\GCWpairs \to \Spectra$
such that $\bfF$ is  excisive. Then for any
$G$-$\calf$-$CW$-pair  $(X,A)$ the following diagram commutes
\[
\xymatrix@!C=9em{%
\bfF^{\%}(X) \ar[r]^{\bfA_{\bfF}(X)}_{\simeq} \ar[d]_{\bfT^{\%}(X)}
&
\bfF(X) \ar[d]^{\bfT(X)}
\\
\bfE^{\%}(X) \ar[r]_{\bfA_{\bfE}(X)} 
&
\bfE(X)
}
\]
and $\bfA_{\bfF}(X)$ is a weak homotopy equivalence by
Theorem~\ref{the:approximation_by_an_excisive_functor}~%
\eqref{the:approximation_by_an_excisive_functor:charcterization}. 
Hence $\bfT (X)$ factorizes over $\bf A_{\bfE}(X)$ up to natural weak homotopy equivalence.

Suppose additionally that $\bfT(G/H)$ is a weak homotopy equivalence for every subgroup
$H \subseteq G$.  Then both $\bfT^{\%}(X)$ and $\bfA_{\bfF}(X)$ are weak homotopy
equivalences by Lemma~\ref{lem:transformation_of_G-homology_theories} and
Theorem~\ref{the:approximation_by_an_excisive_functor}~%
\eqref{the:approximation_by_an_excisive_functor:charcterization}, and hence $\bfT (X)$ can be
identified with $\bfA_{\bfE}(X)$ up to natural weak homotopy equivalence.

Recall that there is a natural weak equivalence $\bfB_{\bfE}(X) \colon \bfE^{\%}(X) \xrightarrow{\simeq} \bfE|_{\%}(X)$,
so that one may replace in the considerations above $\bfE^{\%}(X)$ by  $\bfE|_{\%}(X)$, which depends
on the values of $\bfE$ on homogeneous spaces only. 
This universal property  will be the key ingredient
for the identification of  various versions of assembly maps.

%%%%%%%%%%%%%%%%%%%%%%%%%%%%%%%%%%%%%%%%%%%%%%%%%%%%%%%%%%%%%%%%%%%%%%%%%%%%%%%%%
%%%%%%%%%%%%%%%%%%%%%%%%%%%     Classifying spaces for families of subgroups           %%%%%%%%%%%%%%%%
%%%%%%%%%%%%%%%%%%%%%%%%%%%%%%%%%%%%%%%%%%%%%%%%%%%%%%%%%%%%%%%%%%%%%%%%%%%%%%%%%

\typeout{---------------- Classifying spaces for families of subgroups --------------------------------}

\section{Classifying spaces for families of subgroups}
\label{sec:Classifying_spaces_for_families_of_subgroups}

We recall the notion classifying space for a family which was introduced by tom Dieck~\cite{Dieck(1972)}.

\begin{definition}[Family of subgroups]
\label{def:family_of_subgroups}
A \emph{family $\calf$ of subgroups of a group $G$}%
\index{family of subgroups} 
is a set of subgroups of $G$ which is
closed under conjugation with elements of $G$ and under passing to subgroups.
\end{definition}

Our main examples of families are the trivial family $\caltr$ consisting of the trivial
subgroup, the family $\calall$ of all subgroups, and the families $\calfcyc$,
$\calcyc$, $\calfin$, and $\calvcyc$ of finite cyclic subgroups, of cyclic subgroups, of
finite subgroups, and of virtually cyclic subgroups.

\begin{definition}[Classifying $G$-space for a family of subgroups]
\label{def:Classifying_G-space_for_a_family_of_subgroups}
Let $\calf$ be a family of subgroups of $G$. A model
$\EGF{G}{\calf}$
for the \emph{classifying spaces for the family $\calf$ of subgroups of $G$}%
\index{classifying spaces for a family of subgroups}
is a $G$-$CW$-complex $\EGF{G}{\calf}$ that has the following properties:
\begin{enumerate}
\item All isotropy groups of $\EGF{G}{\calf}$ belong to $\calf$;
\item For any $G$-$CW$-complex $Y$, whose isotropy groups belong to $\calf$, 
there is up to $G$-homotopy precisely one $G$-map $Y \to X$. 
\end{enumerate}

We abbreviate $\eub{G} := \EGF{G}{\calfin}$
and call it the \emph{universal $G$-space  for proper $G$-actions}.
We also write  $\edub{G} := \EGF{G}{\calvcyc}$.
\end{definition}

Equivalently, $\EGF{G}{\calf}$ is a terminal object in the $G$-homotopy category of
$G$-$CW$-complexes, whose isotropy groups belong to $\calf$. 
In particular two models for $\EGF{G}{\calf}$ are $G$-homotopy equivalent
and for two families $\calf_0 \subseteq \calf_1$ there is
up to $G$-homotopy precisely one $G$-map $\EGF{G}{\calf_0} \to \EGF{G}{\calf_1}$.
There are  functorial constructions for $\EGF{G}{\calf}$
generalizing the bar construction, see~\cite[Section~3 and Section~7]{Davis-Lueck(1998)}.

\begin{theorem} [Homotopy characterization of $\EGF{G}{\calf}$]
\label{the:G-homotopy_characterization_of_EGF(G)(calf)}
   A $G$-$CW$-complex $X$ is a model for $\EGF{G}{\calf}$ if and only if for every subgroup
  $H \subseteq G$ its $H$-fixed point set $X^H$ is weakly contractible if $H \in \calf$,
  and is empty if $H \notin \calf$.
\end{theorem}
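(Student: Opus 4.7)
The plan rests on the standard adjunction identifying $G$-maps out of $G/H \times Z$ with ordinary maps out of $Z$: for any (non-equivariantly considered) space $Z$ and $G$-space $X$, one has a natural bijection $\map_G(G/H \times Z, X) \cong \map(Z, X^H)$, and an analogous identification for pairs. This will translate the $G$-homotopy statements in the definition of $\EGF{G}{\calf}$ into homotopical statements about $X^H$, and vice versa.

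For the direction ``$\EGF{G}{\calf}\Rightarrow$ fixed point condition'', suppose $X$ satisfies Definition~\ref{def:Classifying_G-space_for_a_family_of_subgroups}. If $H \notin \calf$ and $x \in X^H$, then $H \subseteq G_x \in \calf$, so by closure of $\calf$ under subgroups $H \in \calf$, a contradiction; hence $X^H = \emptyset$. Now let $H \in \calf$. Then the $G$-$CW$-complex $G/H$ has all isotropy groups in $\calf$, so by (2) there is a $G$-map $G/H \to X$, i.e.\ a point in $X^H$; in particular $X^H \ne \emptyset$. For $n \ge 0$ and a base point $x_0 \in X^H$, any map $\phi \colon S^n \to X^H$ corresponds to a $G$-map $\widetilde\phi \colon G/H \times S^n \to X$. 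Both $\widetilde\phi$ and the composite $G/H \times S^n \to G/H \to X$ corresponding to $x_0$ are $G$-maps out of a $G$-$CW$-complex whose isotropy groups lie in $\calf$, so by uniqueness up to $G$-homotopy in (2) they are $G$-homotopic, which translates to a (free) homotopy $S^n \times I \to X^H$ from $\phi$ to the constant map. A parallel argument applied to $Y = G/H \times I$ shows that $X^H$ is path-connected, so $\phi$ is based-nullhomotopic and $\pi_n(X^H, x_0) = 0$.

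For the converse, assume $X^H$ is weakly contractible for $H \in \calf$ and empty otherwise. The isotropy condition (1) is immediate: for $x \in X$ we have $x \in X^{G_x}$, so $X^{G_x} \ne \emptyset$ forces $G_x \in \calf$. For the universal property (2) I will use equivariant obstruction theory, building a $G$-map $f \colon Y \to X$ inductively over the equivariant skeleta of $Y$. Given $f_{n-1} \colon Y_{n-1} \to X$ and an equivariant $n$-cell $G/H_i \times D^n$ attached via $q_i^n \colon G/H_i \times S^{n-1} \to Y_{n-1}$, the composite $f_{n-1} \circ q_i^n$ corresponds under the adjunction to a map $S^{n-1} \to X^{H_i}$; since all isotropy groups of $Y$ lie in $\calf$ we have $H_i \in \calf$, so $X^{H_i}$ is weakly contractible, this map is nullhomotopic, and the extension to $D^n$ (equivalently, to $G/H_i \times D^n$) exists. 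The case $n=0$ only requires nonemptiness of $X^{H_i}$, which holds for the same reason. Uniqueness up to $G$-homotopy is the same argument applied to the $G$-$CW$-pair $(Y \times I, Y \times \partial I)$ with its product cell structure, whose isotropy groups still lie in $\calf$, starting from the $G$-map $Y \times \partial I \to X$ given by two chosen $G$-maps $f_0, f_1$.

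The main technical point is the inductive extension argument in the converse direction. I expect that step to be the principal effort, but it is really just the standard equivariant cellular extension lemma, powered by the vanishing of the relevant $\pi_{n-1}(X^H)$; once the adjunction $\map_G(G/H \times -, X) \cong \map(-, X^H)$ is in place, everything reduces to ordinary obstruction theory on the spaces $X^H$.
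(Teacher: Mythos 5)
Your proof is correct and is the standard argument (adjunction $\map_G(G/H\times Z,X)\cong\map(Z,X^H)$ plus cell-by-cell equivariant obstruction theory); the paper itself states this theorem without proof, deferring to the classical references of tom Dieck and the survey \cite{Lueck(2005s)}, and your argument is exactly the one found there. The only point worth polishing is the passage from the free nullhomotopy of $\phi\colon S^n\to X^H$ to the vanishing of $[\phi]\in\pi_n(X^H,x_0)$, which should be justified by noting that the basepoint track of the homotopy is a loop $\gamma$ with $[\phi]=\gamma_*[c_{x_0}]=0$.
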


A model for $\EGF{G}{\calall}$ is $G/G$. A model for $\EGF{G}{\caltr}$ is the same as a
model for $EG$ i.e, the universal covering of $BG$, or, equivalently, the total space of
the universal $G$-principal bundle. There are many interesting geometric models for
classifying spaces $\eub{G} = \EGF{G}{\calfin}$, e.g., the Rips complex for a hyperbolic
group, the Teichm\"uller space for a mapping class group, and so on. The question whether
there are finite-dimensional models, models of finite type or finite models has been
studied intensively during the last decades. For more information about classifying spaces for families we
refer for instance to~\cite{Lueck(2005s)}.

%%%%%%%%%%%%%%%%%%%%%%%%%%%%%%%%%%%%%%%%%%%%%%%%%%%%%%%%%%%%%%%%%%%%%%%%%%%%%%%%%
%%%%%%%%%%%%%%     The Meta-Isomorphism Conjecture                         %%%%%%%%%%%%%%%%%%%%%%%%%%%%%
%%%%%%%%%%%%%%%%%%%%%%%%%%%%%%%%%%%%%%%%%%%%%%%%%%%%%%%%%%%%%%%%%%%%%%%%%%%%%%%%%

\typeout{------------------------ The Meta-Isomorphism Conjecture -------------------------------}

\section{The Meta-Isomorphism Conjecture}
\label{sec:The_Meta-Isomorphism_Conjecture}

In this section we formulate the  Meta-Isomorphism Conjecture, from which all other Isomorphism
Conjectures such as the one due to Farrell-Jones and Baum-Connes are obtained by specifying
the parameters $\bfE$ and $\calf$.

%%%%%%%%%%%%%%%%%%%%%%%%%%%%%%%%%%%%%%%%%%%%%%%%%%%%%%%%%%%%%%%%%%%%%%%%%%%%%%%%%

\subsection{The Meta-Isomorphism Conjecture for $G$-homology theories}
\label{subsec:The_Meta-Isomorphism_Conjecture_for_G-homology_theories}

Let $\calh^G_*$ be a $G$-homology theory with values in
$\Lambda$-modules for some commutative ring  $\Lambda$. 
The projection $\pr \colon \EGF{G}{\calf} \to G/G$ induces for all integers $n \in \IZ$ a
homomorphism of $\Lambda$-modules
\begin{equation}
\calh_n^G(\pr) \colon \calh^G_n(\EGF{G}{\calf}) \to \calh_n^G(G/G)
\label{assembly_map_for_G-homology_theories}
\end{equation}
which is  called the  \emph{assembly map}.%
\index{assembly map!for $G$-homology theories}

\begin{conjecture}[Meta-Isomorphism Conjecture for $G$-homology theories] 
\label{con:Meta_Isomorphisms_Conjecture_for_G-homology_theories}
The group $G$ satisfies the \emph{Meta-Isomorphism Conjecture}
\index{Meta-Isomorphism Conjecture!for $G$-homology theories}
with respect to the $G$-homology theory $\calh^G_*$ and the family $\calf$ of subgroups of $G$,
if the assembly map 
\[
\calh_n^G(\pr) \colon \calh^G_n(\EGF{G}{\calf}) \to \calh_n^G(G/G)
\]
of~\eqref{assembly_map_for_G-homology_theories} is bijective for all $n \in \IZ$.
\end{conjecture}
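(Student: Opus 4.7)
The statement in question is a conjecture rather than a theorem, so there is no proof to reconstruct; instead I would outline the methodology that has been successfully deployed to \emph{establish} instances of it (namely the Farrell--Jones and Baum--Connes cases). The starting point is always to reformulate the map $\calh_n^G(\pr)$ in the homotopy theoretic language of Section~\ref{subsec:The_homotopy_theoretic_description_of_assembly_maps_intro}: by Lemma~\ref{lem:extending_from_Or(G)_to_excisive_functor} and the universal property discussed in Section~\ref{sec:The_universal_property}, it suffices to produce a natural transformation $\bfT \colon \bfF \to \bfE$ of functors $\GCWpairs \to \Spectra$ with $\bfF$ excisive, $\bfT(G/H)$ a weak equivalence for $H \in \calf$, and then to check that $\bfT(G/G)$ is itself a weak equivalence. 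Thus the problem reduces to controlling $\bfE(G/G) = \bfE^G(G/G)$ by its restriction to the orbit category $\OrGF{G}{\calf}$.

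The next step is to exploit inheritance properties of the class of groups for which the conjecture holds. Typical reductions include: passage to subgroups, extensions with quotient in a tractable class, directed colimits, and transitivity results of the form ``if the conjecture holds relative to $\calf$ and for each $H \in \calf$ it holds relative to $\calf|_H$, then it holds relative to the smaller family.'' Combined with the relative assembly results alluded to in Section~\ref{sec:Relative_assembly_maps}, these formal manipulations shrink the problem to an essentially geometric core: establishing the conjecture for a fundamental class of groups (e.g.\ hyperbolic, CAT(0), or lattices) with respect to a canonical family ($\calvcyc$ in the $K$/$L$-theoretic setting, $\calfin$ in the $C^*$-theoretic setting).

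To attack that geometric core one passes from homotopy theory to controlled topology or, in the operator-algebraic case, to equivariant $KK$-theory. The guiding heuristic is that $\bfE^G(G/G)$ detects ``global'' phenomena while $\bfE^G|_{\OrGF{G}{\calf}}$ detects only ``$\calf$-local'' phenomena, so one must produce, for a given element in the target, a geometric representative whose support can be squeezed into $\calf$-controlled pieces. Concretely, this requires constructing a sufficiently rigid action of $G$ on a finite-dimensional space with good contractibility and small-cover properties (a \emph{finite $\calf$-amenable} or \emph{transfer reducible} action) and pushing the element through a transfer map defined using such an action.

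The main obstacle --- indeed, the reason this remains a conjecture in full generality --- is precisely this last step: verifying the geometric hypotheses (transfer reducibility, Haagerup property, or the Dirac--dual-Dirac mechanism) for a given group requires hard, non-formal input that is specific to the theory $\bfE$ and the family $\calf$. I would expect any genuine new case to stand or fall on the availability of such a suitable $G$-space, and the homotopy-theoretic reformulation and formal reductions sketched above are the easier preparatory moves that funnel the problem toward that geometric bottleneck.
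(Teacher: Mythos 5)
You are right that this statement is a conjecture --- in fact it functions as a definition of when $G$ satisfies the Meta-Isomorphism Conjecture --- so the paper offers no proof of it, only the remark that it holds trivially for $\calf = \calall$ since $G/G$ is then a model for $\EGF{G}{\calall}$. Your survey of how instances are actually established (reformulation via the universal property of Section~\ref{sec:The_universal_property}, the Transitivity Principle and inheritance properties, and the forget-control / Dirac--dual-Dirac geometric core) accurately reflects the paper's own account in Subsection~\ref{subsec:The_interpretation_of_the_Farrell-Jones_assembly_map_in_terms_of_controlled_topology} and Sections~\ref{sec:Relative_assembly_maps} and following, so there is nothing to correct.
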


If we choose $\calf$ to be the family $\calall$ of all subgroups, then $G/G$ is a model
for $\EGF{G}{\calall}$ and the Meta-Isomorphism
Conjecture~\ref{con:Meta_Isomorphisms_Conjecture_for_G-homology_theories} is obviously
true.  The point is to find an as small as possible family $\calf$. The idea of the
Meta-Isomorphism Conjecture~\ref{con:Meta_Isomorphisms_Conjecture_for_G-homology_theories}
is that one wants to compute $\calh^G_n(G/G)$, which is the unknown and the interesting
object, by assembling it from the values $\calh^G_n(G/H)$ for $H \in \calf$.

%%%%%%%%%%%%%%%%%%%%%%%%%%%%%%%%%%%%%%%%%%%%%%%%%%%%%%%%%%%%%%%%%%%%%%%%%%%%%%%%%

\subsection{The Meta-Isomorphism Conjecture on the level of spectra}
\label{subsec:The_Meta-Isomorphism_Conjecture_on_the_level_of_spectra}

Often the construction of the assembly map is done already on the level of spectra or can
be lifted to this level.  Consider a covariant functor
\[
\bfE^G  \colon \Or(G) \to \Spectra.
\]

\begin{conjecture}[Meta-Isomorphism Conjecture for spectra]
\label{con:Meta_Isomorphisms_Conjecture_for_Groupoids_spectra}
\index{Meta-Isomorphism Conjecture!for spectra}
The group $G$ satisfies the \emph{Meta-Isomorphism Conjecture} with respect to the
covariant functor $\bfE^G  \colon \Or(G) \to \Spectra$ and the
family $\calf$ of subgroups of $G$, if the projection $\pr \colon \EGF{G}{\calf} \to G/G$
induces a weak homotopy equivalence
\[
(\bfE^G)_{\%}(\pr) \colon (\bfE^G)_{\%}(\EGF{G}{\calf}) \to (\bfE^G)_{\%}(G/G) = \bfE^G(G/G). 
\]
\end{conjecture}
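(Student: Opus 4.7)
The statement is a spectrum-level reformulation of Conjecture~\ref{con:Meta_Isomorphisms_Conjecture_for_G-homology_theories}, applied to the $G$-homology theory canonically associated to $\bfE^G$. My plan therefore has two parts: first to verify that this reformulation is indeed equivalent to the homology-level conjecture (which turns out to be formal), and then to indicate the geometric input required for any substantive instance.

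For the first part, Lemma~\ref{lem:extending_from_Or(G)_to_excisive_functor} tells us that the prolongation $(\bfE^G)_{\%} \colon \GCWpairs \to \Spectra$ is excisive and that the rule $(X,A) \mapsto \pi_n\bigl((\bfE^G)_{\%}(X,A)\bigr)$ defines a $G$-homology theory $H_*^G(-;\bfE^G)$ with $H_n^G(G/H;\bfE^G) = \pi_n(\bfE^G(G/H))$. A map of spectra is a weak homotopy equivalence precisely when it induces an isomorphism on every $\pi_n$; applying $\pi_*$ to $(\bfE^G)_{\%}(\pr)$ therefore yields exactly the assembly map of~\eqref{assembly_map_for_G-homology_theories}. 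So Conjecture~\ref{con:Meta_Isomorphisms_Conjecture_for_Groupoids_spectra} for $(G,\calf,\bfE^G)$ is logically equivalent to Conjecture~\ref{con:Meta_Isomorphisms_Conjecture_for_G-homology_theories} for $(G,\calf,H_*^G(-;\bfE^G))$. In particular the conjecture is trivially satisfied when $\calf = \calall$, since then $G/G$ itself is a model of $\EGF{G}{\calf}$ and $\pr$ is a $G$-homotopy equivalence.

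Any non-trivial instance, however, requires input outside pure homotopy theory. Via the universal property discussed in Section~\ref{sec:The_universal_property}, the task reduces to producing a natural transformation $\bfT \colon \bfF \to \bfE^G|_{\%}$ out of an excisive functor $\bfF$ that is a weak equivalence on each $G/H$ with $H \in \calf$; one then extracts the desired equivalence by comparing $\bfT$ with $\bfA_{\bfE^G}$ in the square of Section~\ref{sec:The_universal_property}. Concretely this calls for a geometrically tractable model of $\EGF{G}{\calf}$ (whose existence is characterised by Theorem~\ref{the:G-homotopy_characterization_of_EGF(G)(calf)}), together with a transfer or descent technique adapted to the flavour of $\bfE^G$ — controlled topology in the Farrell–Jones setting, or Kasparov's bivariant $KK$-theory with a Dirac–dual-Dirac $\gamma$-element in the Baum–Connes setting. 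The first step is bookkeeping and succeeds in complete generality; the genuine obstacle is this second step, which is known to fail in full generality and is the reason Conjecture~\ref{con:Meta_Isomorphisms_Conjecture_for_Groupoids_spectra} is attacked one class of groups at a time rather than proved as a single theorem.
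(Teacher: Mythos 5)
The statement here is a conjecture (in effect a definition of when $G$ satisfies the Meta-Isomorphism Conjecture on the spectrum level), so there is no proof to supply; your first paragraph — that by Lemma~\ref{lem:extending_from_Or(G)_to_excisive_functor} and the definition of weak equivalence of spectra the condition is equivalent to Conjecture~\ref{con:Meta_Isomorphisms_Conjecture_for_G-homology_theories} for the associated $G$-homology theory $H_*^G(-;\bfE^G)$ — is exactly the remark the paper itself makes immediately after stating the conjecture. Your remaining paragraphs correctly describe how instances are attacked but are commentary rather than proof, which is appropriate since the general statement is open.
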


Notice that $(\bfE^G)_{\%}(\pr) \colon (\bfE^G)_{\%}(\EGF{G}{\calf}) \to (\bfE^G)_{\%}(G/G) = \bfE^G(G/G)$ 
is a weak homotopy equivalence
if and only if for every $n \in \IZ$ the map
\[
H_n^G(\pr;\bfE^G) \colon H^G_n(\EGF{G}{\calf};\bfE^G) \to H_n^G(G/G;\bfE^G)
\]
is a bijection, where $H_*^G(-;\bfE^G)$ is the $G$-homology theory associated to $\bfE^G$, see 
Lemma~\ref{lem:extending_from_Or(G)_to_excisive_functor}. In other words,
Conjecture~\ref{con:Meta_Isomorphisms_Conjecture_for_Groupoids_spectra} is equivalent
to Conjecture~\ref{con:Meta_Isomorphisms_Conjecture_for_G-homology_theories} if we take for 
$\calh^G_*$ the $G$-homology theory associated to $\bfE^G$.

%%%%%%%%%%%%%%%%%%%%%%%%%%%%%%%%%%%%%%%%%%%%%%%%%%%%%%%%%%%%%%%%%%%%%%%%%%%%%%%%%

\subsection{The assembly map in terms of homotopy colimits}
\label{subsec:The_assembly_map_in_terms_of_homotopy_colimits}

The assembly map appearing in
Conjecture~\ref{con:Meta_Isomorphisms_Conjecture_for_Groupoids_spectra}%
\index{assembly map!in terms of homotopy colimits}
can be interpreted in terms of homotopy colimits as follows.  Let
$\OrGF{G}{\calf}$ be the full subcategory of $\OrG$ consisting of those
objects $G/H$ for which $H$ belongs to $\calf$.  Let
$\bfE^G|_{\OrGF{G}{\calf}}$ be the restriction of $\bfE^G$ to
$\OrGF{G}{\calf}$.  Then we get from the inclusion
$\OrGF{G}{\calf} \to \OrG$ and the fact that $G/G$ is a terminal object
in $\OrG$ a map
\[
\hocolim_{\OrGF{G}{\calf}} \bfE^G|_{\OrGF{G}{\calf}} \to \hocolim_{\OrG} \bfE^G = \bfE^G(G/G).
\]
This map can be identified with
$(\bfE^G)_{\%}(\pr) \colon (\bfE^G)_{\%}(\EGF{G}{\calf}) \to (\bfE^G)_{\%}(G/G)$,
see~\cite[Section~5.2]{Davis-Lueck(1998)}. Again this explains the name assembly map:  we try
to put the values of $\bfE^G$ on homogeneous spaces $G/H$ for $H \in \calf$ together to
get its value at $G/G$.

%%%%%%%%%%%%%%%%%%%%%%%%%%%%%%%%%%%%%%%%%%%%%%%%%%%%%%%%%%%%%%%%%%%%%%%%%%%%%%%%%

\subsection{Spectra over $\Groupoids$}
\label{subsec:Spectra_over_Groupoids}

In all interesting cases we will obtain $\bfE^G$ as follows.
Let $\Groupoids$ be the  category  of small groupoids.
Consider a covariant functor 
\[
\bfE \colon \Groupoids \to \Spectra
\]
which \emph{respects equivalences}, i.e., it sends equivalences of groupoids to weak equivalences
of spectra.  Given a $G$-set $S$, its \emph{transport groupoid} $\calT^G(S)$ has $S$ as
set of objects and the set of morphism from $s_0$ to $s_1$ is
$\{g \in G \mid gs_0 = s_1\}$. Composition comes from the multiplication in $G$.  We get
for every group $G$ a functor
\[
\bfE^G  \colon \Or(G) \to \Spectra
\]
by composing $\bfE$ with the functor
$\Or(G) \to \Groupoids, \;G/H \mapsto \calT^G(G/H)$.  

Notice that a group $G$ can be
viewed as a groupoid with one object and $G$ as set of automorphisms of this object and
hence we can consider $\bfE(G)$. We have the obvious identifications
$\bfE(G) = \bfE^G(G/G) = (\bfE^G)_{\%}(G/G)$. Moreover, for every subgroup $H \subseteq G$ 
there is an equivalence of groupoids $H \to \calT^G(G/H)$ sending the unique object of $H$
to the object $eH$, which induces a weak homotopy equivalence  $\bfE(H) \to \bfE^G(G/H)$.

The various prominent Isomorphism Conjectures such as the one due to Farrell-Jones and Baum-Connes
are now obtained by specifying  $\bfE \colon \Groupoids \to \Spectra$, the group $G$  and the family $\calf$.

%%%%%%%%%%%%%%%%%%%%%%%%%%%%%%%%%%%%%%%%%%%%%%%%%%%%%%%%%%%%%%%%%%%%%%%%%%%%%%%%%
%%%%%%%%%%%%%%%%%%%%%%%%    The Farrell-Jones Conjecture              %%%%%%%%%%%%%%%%%%%%%%%%%%%%%
%%%%%%%%%%%%%%%%%%%%%%%%%%%%%%%%%%%%%%%%%%%%%%%%%%%%%%%%%%%%%%%%%%%%%%%%%%%%%%%%%

\typeout{------------------------- The Farrell-Jones Conjecture -------------------------------}

\section{The Farrell-Jones Conjectures}
\label{sec:The_Farrell-Jones_Conjecture}

%%%%%%%%%%%%%%%%%%%%%%%%%%%%%%%%%%%%%%%%%%%%%%%%%%%%%%%%%%%%%%%%%%%%%%%%%%%%%%%%%

\subsection{The Farrell-Jones Conjecture for $K$-and $L$-theory}
\label{subsec:The_Farrell-Jones_Conjecture_for_K-_and_L-theory}

Let $R$ be a ring (with involution). There exist covariant functors respecting equivalences
\begin{eqnarray}
\bfK_R%
\colon \Groupoids & \to & \Spectra;
\label{Kalg_GROUPOIDS}
\\
\bfL^{\langle -\infty \rangle}_R \colon \Groupoids & \to & \Spectra,
\label{LGROUPOIDS}
\end{eqnarray}
such that for every group $G$ and all $n \in \IZ$ we have
\begin{eqnarray*}
\pi_n(\bfK_R(G)) & \cong & K_n(RG);
\\
\pi_n(\bfL^{\langle -\infty \rangle}_R(G)) & \cong & L_n^{\langle -\infty \rangle}(RG).
\end{eqnarray*}
Here $K_n(RG)$ is the $n$-th algebraic $K$-group of the group ring $RG$ and
$L_n^{\langle -\infty \rangle}(RG)$ is the $n$th quadratic $L$-group with decoration
$\langle -\infty \rangle$ of the group ring $RG$ equipped with the involution sending
$\sum_{g \in G} r_g g$ to $\sum_{g \in G} \overline{r_g} g^{-1}$.

The details of this construction can be found  in~\cite[Section 2]{Davis-Lueck(1998)}. 
If we now take these functors and the family $\calvcyc$ of virtually cyclic subgroups,
we obtain

\begin{conjecture}[Farrell-Jones Conjecture]\label{con:FJC}
  A group $G$ satisfies the \emph{$K$-theoretic or $L$-theoretic Farrell-Jones Conjecture}%
\index{Farrell-Jones Conjecture!$K$- and $L$-theoretic}
 if for
  every ring (with involution) $R$ the assembly maps induced by the projection
  $\pr \colon \edub{G} \to G/G$
\begin{eqnarray*}
  H_n^G(\pr;\bfK_R) \colon H^G_n(\edub{G};\bfK_R) 
& \to & 
H^G_n(G/G;\bfK_R)  = K_n(RG);
  \\
  H_n^G(\pr;\bfL_R^{\langle -\infty  \rangle}) \colon H^G_n(\edub{G};\bfL_R^{\langle -\infty  \rangle}) 
& \to &  
H^G_n(G/G;\bfL_R^{\langle -\infty  \rangle}) = L_n^{\langle -\infty \rangle}(RG),
\end{eqnarray*}
are bijective for all $n \in \IZ$.
\end{conjecture}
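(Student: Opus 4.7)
The statement at hand is a famous open conjecture, so the plan cannot be to give a proof in general; it must be to outline the strategy along which the conjecture has actually been verified for large classes of groups (hyperbolic groups, $\CAT(0)$-groups, lattices in virtually connected Lie groups, solvable groups, and so on). First I would use the formalism developed above, in particular Lemma~\ref{lem:extending_from_Or(G)_to_excisive_functor} and the reinterpretation of the assembly map as $(\bfE^G)_{\%}(\pr)$, to reduce the statement to showing that the maps
\[
\hocolim_{\OrGF{G}{\calvcyc}} \bfK_R^G|_{\OrGF{G}{\calvcyc}} \;\longrightarrow\; \bfK_R^G(G/G)
\]
and its $\bfL_R^{\langle -\infty\rangle}$-analogue are weak equivalences of spectra. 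This trades a computation of the mysterious groups $K_n(RG)$ and $L_n^{\langle -\infty\rangle}(RG)$ for an equivariant homotopy-theoretic statement about the $G$-action on a concrete geometric model of $\edub{G}$; the source is accessible by spectral sequences and equivariant Chern characters, so only the assembly map itself needs to be understood.

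Second I would exploit the universal property from Section~\ref{sec:The_universal_property} to re-identify the Davis--L\"uck assembly maps with forget-control maps (in algebraic $K$-theory) and with the surgery-theoretic assembly map (in $L$-theory). Concretely, I would construct a proper isometric $G$-action on a suitable metric space $\Sigma$ — a $\CAT(0)$-space, a Mineyev-type flow space, or an open cone over a boundary at infinity, depending on the class of $G$ — and an associated category of $G$-equivariant geometric modules over $\Sigma$. The obstruction to the assembly map being a weak equivalence is then captured by the existence of long, thin, $G$-equivariant open covers of $\Sigma \times [1,N]$ whose members have isotropy in $\calvcyc$, packaged axiomatically as ``transfer reducibility'' or as $\calvcyc$-amenability of the $G$-action. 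One verifies this combinatorial-geometric property by hand for each class of groups under consideration.

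Third I would run the transfer argument: combining such a cover with a Wall-type finiteness result for $\edub{G}$ and with induction over $\calvcyc$, one produces, after an appropriate suspension or loop, a null-homotopy of the cofibre of the assembly map through a controlled chain homotopy, yielding the desired weak equivalence. Along the way one would invoke inheritance properties (passing to subgroups, overgroups of finite index, finite products, colimits of directed systems, and free products) so that, once a geometric input is available for a ``seed'' class of groups, the conjecture propagates to a much larger class; this is the step that turns geometric results about hyperbolic or $\CAT(0)$-groups into results about, e.g., lattices in Lie groups or $\mathrm{GL}_n(\IZ)$.

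The hard part, and the place where all current technology is concentrated, is the construction of the geometric flow space and of a long, thin $\calvcyc$-cover: one must find, for each candidate group $G$, a space on which $G$ acts nicely enough that a contracting flow with controlled virtually cyclic dynamics can be built. For groups outside the geometric universe — certain amenable groups beyond the elementary amenable range, Thompson-like groups, Burnside quotients, and generic finitely presented groups — no such $\Sigma$ is presently known, and this is precisely why the conjecture remains open in full generality despite the homotopy-theoretic machinery above being completely satisfactory.
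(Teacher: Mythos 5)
The statement you were asked about is a \emph{conjecture}, not a theorem: the paper states it without proof, and it remains open in general, so there is no proof in the paper to compare against. You correctly recognize this, and your outline of the known strategy --- reduction to the homotopy-colimit/excisive-approximation formulation, re-identification of the assembly map as a forget-control map via the universal property, transfer arguments over flow spaces with long thin $\calvcyc$-covers, and propagation via the inheritance properties --- faithfully matches what the paper itself sketches in its discussion of controlled topology and of the class $\calfj$.
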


It is crucial that we use  non-connective $K$-spectra and that
the decoration for the $L$-theory is $\langle -\infty \rangle$,
see~\cite{Farrell-Jones-Lueck(2002)}.

The original version of the Farrell-Jones Conjecture appeared in~\cite[1.6 on page~257]{Farrell-Jones(1993a)}. 
A detailed exposition on the Farrell-Jones Conjecture will be given in~\cite{Lueck(2020book)}, 
see also~\cite{Lueck-Reich(2005)}.

%%%%%%%%%%%%%%%%%%%%%%%%%%%%%%%%%%%%%%%%%%%%%%%%%%%%%%%%%%%%%%%%%%%%%%%%%%%%%%%%%

\subsection{Applications of the Farrell-Jones Conjecture}
\label{subsec:Applications_of_the_Farrell-Jones_Conjecture}

Here are some consequences of the Farrell-Jones Conjecture. For more information about
these and other applications we refer for instance to
\cite{Bartels-Lueck-Reich(2008appl),Lueck(2020book),Lueck-Reich(2005)}.

\subsubsection{Computations}
One can carry out \emph{explicite computations} of $K$ and $L$-groups of group rings
by applying methods from algebraic topology to the left side given by a $G$-homology theory and
by finding small models for the classifying spaces of families using the  topology and geometry of groups,
see Section~\ref{sec:Computationally_tools}.

\subsubsection{Vanishing of lower and middle $K$-groups}
 If $G$ is a torsionfree group satisfying the $K$-theoretic Farrell-Jones Conjecture~\ref{con:FJC}, 
then $K_n(\IZ G)$ for $n \le -1$, the reduced
projective class group $\widetilde{K}_0(\IZ G)$, and the Whitehead group $\Wh(G)$ vanish.

This has the following consequences. Every homotopy equivalence $f \colon X \to Y$ of
connected $CW$-complexes with $\pi_1(Y) \cong G$ is simple.  Every $h$-cobordism over a
closed manifold $M$ of dimension $\ge 5$ and $G \cong \pi_1(M)$ is trivial.  Every
finitely generated projective $\IZ G$-module is stably free. Every finitely dominated
connected $CW$-complex $X$ with $\pi_1(X) \cong G$ is homotopy equivalent to a finite
$CW$-complex.

\subsubsection{Kaplansky's Idempotent Conjecture}
If the torsionfree group $G$ satisfies the $K$-theoretic Farrell-Jones
  Conjecture~\ref{con:FJC}, then $G$ satisfies the \emph{Idempotent Conjecture}%
\index{Idempotent Conjecture}
  that for a commutative integral domain $R$ the only idempotents of $RG$ are $0$ and $1$.

\subsubsection{Novikov Conjecture}
If $G$ satisfies the $L$-theoretic Farrell-Jones Conjecture~\ref{con:FJC}, then $G$
  satisfies the \emph{Novikov Conjecture}%
\index{Novikov Conjecture}
 about the homotopy invariance of higher
  signatures. For more information about the Novikov Conjecture
 we refer for instance 
to~\cite{Ferry-Ranicki-Rosenberg(1995a),Ferry-Ranicki-Rosenberg(1995b),Kreck-Lueck(2005)}.

\subsubsection{Borel Conjecture}
If $G$ is a torsionfree group satisfying the $K$-theoretic and the $L$-theoretic
  Farrell-Jones Conjecture~\ref{con:FJC}, then $G$ satisfies the \emph{Borel Conjecture}%
\index{Borel Conjecture}
in dimensions $\ge 5$, i.e., if $M$ and $N$ are closed aspherical manifolds of dimension
$\ge 5$ with $\pi_1(M) \cong \pi_1(N) \cong G$, then $M$ and $N$ are homeomorphic and
every homotopy equivalence from $M$ to $N$ is homotopic to a homeomorphism.

\subsubsection{Bass Conjecture}
If $G$ satisfies the $K$-theoretic Farrell-Jones Conjecture~\ref{con:FJC}, then $G$
  satisfies the \emph{Bass Conjecture},%
\index{Bass Conjecture}
 see~\cite{Bartels-Lueck-Reich(2008appl), Bass(1976)}.

\subsubsection{Automorphism groups} The Farrell-Jones Conjecture~\ref{con:FJC} yields
rational computations of the homotopy groups and homology groups of the automorphisms
groups of an aspherical closed manifold in the topological, PL and smooth category, see
for instance instance~\cite{Farrell-Hsiang(1978)},~\cite[Section~2]{Farrell-Jones(1990b)}
and~\cite[Lecture~5]{Farrell(2002)}.  

For instance,
if $M$ is an aspherical orientable closed  (smooth) manifold of dimension $> 10$ with fundamental group $G$
such that $G$ satisfies the Farrell-Jones Conjecture~\ref{con:FJC}, then
we get for $1 \leq i \leq ( \dim M -7 ) / 3$
\[
\pi_i ( \operatorname{Top}( M ) ) \otimes_{\IZ} \IQ 
              = \left\{ \begin{array}{lll} \operatorname{center} ( G ) \otimes_{\IZ} \IQ  & \quad & \mbox{if} \; i=1 ;\\
                                                  0  & \quad & \mbox{if} \; i > 1,
                         \end{array}    
                \right.
\]
and
\[
\pi_i ( \operatorname{Diff} ( M ) ) \otimes_{\IZ} \IQ =
                \left\{ \begin{array}{lll} \operatorname{center} ( G ) \otimes_{\IZ} \IQ  &  & \mbox{if} \; i=1 ;\\
     \bigoplus_{j=1}^{\infty} H_{(i +1) - 4j} ( M ; \IQ ) &  & \mbox{if} \; i > 1 \; \mbox{and} \; \dim M \mbox{ odd} ;\\
                                                      0 & & \mbox{if} \; i > 1 \: \mbox{and} \; \dim M \; \mbox{even}.
                         \end{array}    
                \right.
\]

For a  survey on automorphisms of manifolds we refer to~\cite{Weiss-Williams(2001)}.

\subsubsection{Boundary of hyperbolic groups}
In~\cite{Bartels-Lueck-Weinberger(2010)} a proof of a conjecture of Gromov is given in
dimensions $n \ge 6$ using the Farrell-Jones Conjecture~\ref{con:FJC} that a torsionfree
hyperbolic group with $S^n$ as boundary is the fundamental group of an aspherical closed
topological manifold.  This manifold is unique to homeomorphism. The stable Cannon Conjecture
is treated in~\cite{Ferry-Lueck-Weinberger(2018)}.

\subsubsection{Poincar\'e duality groups}
If $G$ is a Poincar\'e duality group of dimension $n \ge 6$ and satisfies the
Farrell-Jones Conjecture~\ref{con:FJC}, then it is the fundamental group of an aspherical
closed homology ANR-manifold, see~\cite{Bartels-Lueck-Weinberger(2010)}.  It is unique up
to $s$-cobordism. Whether it can be chosen to be an aspherical closed topological
manifold, depends on its Quinn obstruction.

\subsubsection{Tautological classes and aspherical manifolds}
The vanishing of tautological classes for many bundles with fibre an aspherical manifold is proved
in~\cite{Hebestreit-Land-Lueck-Randel-Williams(2017)}.

\subsubsection{Fibering manifolds}
The problem when a map from some closed connected manifold to an
  aspherical closed manifold approximately fibers, i.e., is homotopic to Manifold
  Approximate Fibration, is analyzed in~\cite{Farrell-Lueck-Steimle(2018)}.

%%%%%%%%%%%%%%%%%%%%%%%%%%%%%%%%%%%%%%%%%%%%%%%%%%%%%%%%%%%%%%%%%%%%%%%%%%%%%%%%%

\subsection{The interpretation of the Farrell-Jones assembly map for $L$-theory in terms of surgery theory}
\label{subsec:The_interpretation_of_the_Farrell-Jones_assembly_map_for_L-theory_in_terms_of_surgery_theory}
\index{assembly map!in terms of surgery theory}
So far we have given a homotopy theoretic approach to the assembly map. This is the
easiest approach and well-suited for structural questions such as comparing the assembly
maps of various different theories, as explained below. For concrete applications it
is important to give geometric or analytic interpretations. For instance, one key
ingredient in the proof that the Borel Conjecture follows from the Farrell-Jones
Conjecture is a geometric interpretation of the assembly for the trivial family in terms
of surgery theory, notably the  surgery exact sequence, which we briefly sketch next.

\begin{definition}[The  structure set]\label{def:structure_set}
Let $N$ be a closed topological manifold of dimension $n$. We call two simple
homotopy equivalences $f_i\colon  M_i \to N$ from  closed
topological manifolds $M_i$ of dimension $n$ to $N$ for $i = 0,1$ equivalent if there
exists a  homeomorphism  $g\colon  M_0 \to M_1$
such that $f_1 \circ g$ is homotopic to $f_0$. 
\medskip

The \emph{structure set $\cals(N)$}%
\index{structure set} 
of $N$ is the set of equivalence classes of simple
homotopy equivalences $M \to X$ from closed topological manifolds of dimension $n$ to
$N$. This set has a preferred base point, namely, the class of the identity $\id\colon N \to N$.
\end{definition}

One easily checks that the Borel Conjecture holds for $G = \pi_1(N)$ for a closed
aspherical manifold $N$ if and only if $\cals(N)$ consists of precisely one element,
namely, the class of $\id_N \colon N \to N$. The surgery exact sequence, which we will
explain next, gives a way of calculating the structure set.

\begin{definition}[Normal map of degree one]
A \emph{normal map of degree one}%
\index{normal map of degree one}
with target the connected  closed manifold $N$ of dimension $n$  consists of:
\begin{itemize}

\item A   connected closed  $n$-dimensional manifold $M$;

\item A map of degree one $f \colon M \to N$;

\item A $(k+n)$-dimensional vector bundle $\xi$ over $N$;

\item A bundle map $\overline{f} \colon TM \oplus \underline{\IR^k} \to \xi$ covering $f$.

\end{itemize}
\end{definition}

There is an obvious normal bordism relation and we denote by $\caln(N)$ the set of bordism
classes of normal maps with target $N$. One can assign to a normal map $f \colon M \to N$
its surgery obstruction $\sigma(f) \in L_n^s(\IZ G)$ taking values in the $n$th quadratic
$L$-group with decoration $s$, where $G = \pi_1(N)$ and $n = \dim(N)$. If $n \ge 5$, the
surgery obstruction vanishes if and only if one can find (by doing surgery) a representative in the normal
bordisms class, whose underlying map $f$ is a simple homotopy equivalence. It yields a map
$\sigma \colon \caln(N) \to L_n^s(\IZ G)$.  There is a map
$\eta \colon \cals^{\topo}_n(N) \to \caln(N)$ which assigns to the class of  a simple homotopy equivalence
$f \colon M \to N$ with a  closed manifold $M$ as source 
the normal map given by $f$ itself  and the bundle data coming from $TM$ and
$\xi = (f^{-1})^*TM$ for some homotopy inverse $f^{-1}$ of $f$. We denote by
$\caln(N \times [0,1], N \times \partial [0,1])$ the normal bordism classes of normal maps
relative boundary.  Essentially these are normal maps
$(M,\partial M) \to (N \times [0,1], N \times \partial [0,1])$ of degree one which are simple
homotopy equivalences on the boundary. There is a surgery obstruction relative boundary
which yields a map
$\sigma \colon \caln(N \times [0,1], N \times \partial [0,1]) \to L_{n+1}^s(\IZ G)$. There
is a also a map $\partial \colon L_{n+1}^s(\IZ G) \to \cals^{\topo}_n(N)$ which sends an
element $x \in L_{n+1}^s(\IZ G)$ to the class of a simple homotopy equivalence $f \colon M \to N$
for which there exists a normal map relative boundary of triads
$(F,f_0,\id_N) \colon (W;M,N) \to (N \times [0,1]; N \times \{0\},N \times \{1\})$ whose
relative surgery obstruction is $x$.  If $n \ge 5$, then one obtains a long exact sequence of
abelian groups, the \emph{surgery exact sequence}%
\index{surgery exact sequence}
due to Browder, Novikov, Sullivan and Wall
\begin{equation}
\caln(N\times [0,1],N \times \partial [0,1]) 
\xrightarrow{\sigma_{n+1}}  L^s_{n+1}(\IZ G) 
\xrightarrow{\partial} \cals(N) \\
\xrightarrow{\eta} \caln(N) \xrightarrow{\sigma_n}
L_n^s(\IZ G).
\label{surgery_exact_sequence}
\end{equation}
If  we can show that $\sigma_{n+1}$ is surjective and $\sigma_n$ is injective, then the
Borel Conjecture holds for $G = \pi_1(N)$, if $N$ is an aspherical closed manifold of
dimension $n \ge 5$.

Let $\bfL$ be the $L$-theory spectrum. It has the property
$\pi_n(\bfL) = L_n^{\langle -\infty \rangle}(\IZ)$.
Denote by $\bfL\langle 1 \rangle$ its  \emph{$1$-connective cover}. 
It comes with a natural map of spectra $\bfL \langle 1 \rangle \to \bfL$,
which induces on $\pi_i$ an isomorphism for $i \ge 1$, and we have $\pi_i(\bfL\langle 1 \rangle) = 0$ for $i \le 0$.
There are natural identifications coming among other things from the Pontrjagin-Thom construction
\begin{eqnarray*}
u_n \colon \caln(N) 
& \xrightarrow{\cong}  & 
H_n(N;\bfL\langle 1 \rangle) = \pi_n(N_+ \wedge \bfL\langle 1 \rangle);
\\
u_{n+1} \colon \caln(N\times [0,1],N \times \{0,1\}) 
& \xrightarrow{\cong} & 
H_{n+1}(N;\bfL\langle 1 \rangle) = \pi_{n+1}\bigl(N_+ \wedge \bfL\langle 1 \rangle).
\end{eqnarray*}
An easy spectral sequence argument shows that the canonical map
\[
v_n \colon H_n(N;\bfL\langle 1 \rangle) \to H_n(N;\bfL)
\]
is injective and the canonical map
\[
v_{n+1} \colon H_{n+1}(N;\bfL\langle 1 \rangle) \to H_{n+1}(N;\bfL)
\]
is bijective for $n = \dim(N)$.

For the remainder of this subsection we assume additionally that $N$ is aspherical.
There is a natural identification for $m = n,n+1$, see
Definition~\ref{def:equivariant_homology_theory},
\[
w_m \colon H_m^G(EG;\bfL_{\IZ}^{\langle -\infty \rangle}) \xrightarrow{\cong}  H_m(BG;\bfL) = H_m(N;\bfL).
\]
The $K$-theoretic Farrell-Jones Conjecture applied to the torsionfree group $G$ implies
that $K_n(\IZ G)$ for $n \le -1$, the reduced
projective class group $\widetilde{K}_0(\IZ G)$, and the Whitehead group $\Wh(G)$ vanish.
One concludes from the so called \emph{Rothenberg sequences}, see~\cite[Theorem~17.2 on page~146]{Ranicki(1992a)},
that for $m = n,n+1$ the canonical map
\[
r_m \colon L_m^s(\IZ G) \xrightarrow{\cong} L_m^{\langle -\infty \rangle}(\IZ G)
\]
is bijective. The up to $G$-homotopy unique $G$-map
$i \colon EG = \EGF{G}{\caltr} \to \edub{G}$ induces for all $m \in \IZ$ an isomorphism, 
see Theorem~\ref{the:relative_assembly_maps}~\eqref{the:relative_assembly_maps:L-tor},
\[
H_m^G(i;\bfL_{\IZ}^{\langle -\infty \rangle})  \colon
H_m^G(EG;\bfL_{\IZ}^{\langle -\infty \rangle})  \xrightarrow{\cong}  H_m^G(\edub{G};\bfL_{\IZ}^{\langle -\infty \rangle}).
\]
The $L$-theoretic Farrell-Jones Conjecture predicts the bijectivity of the assembly map
\[
H_n^G(\pr,\bfL_{\IZ}^{\langle -\infty \rangle}) \colon H_n^G(\edub{G};\bfL_{\IZ}^{\langle -\infty \rangle}) 
\xrightarrow{\cong} H_n^G(G/G;\bfL_{\IZ}^{\langle -\infty \rangle}) = L_n^{\langle -\infty \rangle}(\IZ G).
\]
The following diagram commutes
\[
\xymatrix@!C=11em{%
\caln(N) \ar[rr]^-{\sigma_n} \ar[d]_{u_n}^{\cong}
& &
L_n^s(\IZ G)
\\
H_n(N;\bfL\langle 1 \rangle)
\ar[d]_{v_n} & & 
\\
H_n(N;\bfL)
& & 
\\
H_n^G(EG;\bfL_{\IZ}^{\langle -\infty \rangle}) \ar[u]_{w_n}^{\cong} 
\ar[r]^{\cong}_-{H_n^G(i,\bfL_{\IZ}^{\langle -\infty \rangle})}
&
H_n^G(\edub{G};\bfL_{\IZ}^{\langle -\infty \rangle}) 
\ar[r]^{\cong}_-{H_n^G(\pr,\bfL_{\IZ}^{\langle -\infty \rangle})}
&
L_n^{\langle -\infty \rangle}(\IZ G) \ar[uuu]_{r_n}^{\cong}
}
\]
If we replace everywhere $n$ by $n+1$ and in the upper left corner $\caln(N)$ by
$\caln(N\times [0,1],N \times \partial [0,1])$, we get the analogous commutative diagram.
The proof of the commutativity of these diagrams is rather involved and we refer for a
proof for instance to~\cite{Kuehl-Macko-Mole(2013)}.  We conclude from these two diagrams
that $\sigma_n$ is injective and $\sigma_{n+1}$ is bijective since $v_n$ is injective and
$v_{n+1}$ is bijective. Recall that this implies the vanishing of the structure set
$\cals(N)$.  Hence the Farrell-Jones Conjecture~\ref{con:FJC} implies the Borel Conjecture
in dimensions $\ge 5$.

For more information about $L$-groups and surgery theory and the arguments and facts above
we refer for instance 
to~\cite{Cappell-Ranicki-Rosenberg(2000),  Cappell-Ranicki-Rosenberg(2001), Crowley-Lueck-Macko(2019), 
Lueck(2002c), Ranicki(1992),
  Wall(1999)}.

%%%%%%%%%%%%%%%%%%%%%%%%%%%%%%%%%%%%%%%%%%%%%%%%%%%%%%%%%%%%%%%%%%%%%%%%%%%%%%%%%

\subsection{The interpretation of the Farrell-Jones assembly map in terms of controlled topology}
\label{subsec:The_interpretation_of_the_Farrell-Jones_assembly_map_in_terms_of_controlled_topology}

We have defined the assembly map appearing in the Farrell-Jones Conjecture as a map
induced by the projection $\edub{G} \to G/G$ for a $G$-homology theory
$ H_*^G(X;\bfE^G)$ or for the  functor $(\bfE^G)_{\%} \colon \GCW \to \Spectra$. We have also
given a homotopy theoretic interpretation in terms of  homotopy colimits and described its universal
property to be the best approximation from the left by an excisive functor.  This
interpretation is good for structural and computational aspects but it turns out that it
is not helpful for the proof that the assembly maps is a weak homotopy equivalence.
There is no direct homotopy theoretic construction of an inverse up to weak homotopy
equivalence known to the author.

For the actual proofs that the assembly maps are weak homotopy equivalences, the
interpretation of the assembly map as a \emph{forget control map}%
\index{assembly map!in terms of forgetting control} is crucial. This
fundamental idea is due to Quinn.

Roughly speaking, one attaches to a metric space certain categories, to these categories
spectra and then takes their homotopy groups, where everything depends on a choice of
certain control conditions which in some sense measure sizes of cycles.  If one requires
certain control conditions, one obtains the source of the assembly map.  If one requires
no control conditions, one obtains the target of the assembly map.  The assembly map
itself is forgetting the control condition.

One of the basic features of a homology theory is excision. It often comes from the fact
that a representing cycle can be arranged to have arbitrarily good control.  An example is the
technique of subdivision which allows to make the representing cycles for singular homology
arbitrarily controlled, i.e., the diameter of the image of a singular simplex
appearing in a singular chain with non-zero coefficient can be arranged to be arbitrarily small.  
This is the key ingredient in the proof that singular homology satisfies excision.
In general one may say that requiring control conditions amounts to implementing homological properties.

With this interpretation it is clear what the main task in the proof of
surjectivity of the assembly map is: \emph{achieve control}, i.e., manipulate
cycles without changing their homology class so that they become sufficiently
controlled.  There is a general principle that a proof of surjectivity also
gives injectivity. Namely, proving injectivity means that one must construct a
cycle whose boundary is a given cycle, i.e., one has to solve a surjectivity
problem in a relative situation. The actual  implementation of this idea
is rather technical. The proof that this forget control version of the assembly map agrees up to weak homotopy
equivalence with the homotopy theoretic one appearing in the Farrell-Jones Conjecture~\ref{con:FJC}
is a direct application of Section~\ref{sec:The_universal_property}. The same is true also for the version of the assembly map
appearing in~\cite[1.6 on page~257]{Farrell-Jones(1993a)}, as explained in~\cite[page~239]{Davis-Lueck(1998)}.

To achieve control one can now use geometric methods. The key ingredients are 
contracting maps and open coverings, transfers, flow spaces and the geometry of the group $G$.

For more information about the general strategy of proofs we refer for instance 
to~\cite{Bartels(2016),Lueck(2010asph),Lueck(2020book)}.

%%%%%%%%%%%%%%%%%%%%%%%%%%%%%%%%%%%%%%%%%%%%%%%%%%%%%%%%%%%%%%%%%%%%%%%%%%%%%%%%%

\subsection{The Farrell-Jones Conjecture for Waldhausen's $A$-theory}
\label{subsec:The_Farrell-Jones_Conjecture_for_Waldhausens_A-theory}

Waldhausen has defined for a $CW$-complex $X$ its
algebraic $K$-theory space $A(X)$ in~\cite[Chapter~2]{Waldhausen(1985)}.  
As in the case of algebraic $K$-theory of rings it
will be necessary  to consider a non-connective version. Vogell~\cite{Vogell(1990)}
has defined a delooping of $A(X)$ yielding a non-connective spectrum $\bfA (X)$ for
a $CW$-complex $X$.  This construction actually yields a covariant functor from the
category of topological spaces to the category of spectra. We can assign to a groupoid
$\calg$ its classifying space $B\calg$. Thus we obtain a covariant functor
\begin{equation}
  \bfA \colon \Groupoids \to \Spectra, \quad \calg \mapsto \bfA(B\calg),
\label{bfA_Groupoid_to_Spectra}
\end{equation}
denoted by $\bfA$ again. It respects equivalences, see~\cite[Proposition~2.1.7]{Waldhausen(1985)}  
and~\cite{Vogell(1990)}. If we now take this functor and the family $\calvcyc$ of virtually cyclic subgroups,
we obtain the $A$-theoretic Farrell-Jones Conjecture

\begin{conjecture}[$A$-theoretic Farrell-Jones Conjecture]\label{con:FJC_A-theory}%
\index{Farrell-Jones Conjecture!$A$-theoretic}
  A group $G$ satisfies the \emph{$A$-theoretic Farrell-Jones Conjecture} if 
  the assembly maps induced by the projection
  $\pr \colon \edub{G} \to G/G$
\[
  H_n^G(\pr;\bfA) \colon H^G_n(\edub{G};\bfA) 
\to 
H^G_n(G/G;\bfA)  = \pi_n(\bfA(BG))
\]
is bijective for all $n \in \IZ$.
\end{conjecture}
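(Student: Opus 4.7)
Since the statement is a conjecture (open in general), my plan would be to mirror the strategy that has been successfully used to verify the $K$- and $L$-theoretic Farrell-Jones Conjecture~\ref{con:FJC} for specific classes of groups (hyperbolic groups, $\operatorname{CAT}(0)$-groups, lattices in Lie groups, etc.), and transfer that machinery to Waldhausen's $A$-theory. The overall approach is to reinterpret $H^G_n(\edub{G};\bfA) \to \pi_n(\bfA(BG))$ as a forget-control map, as sketched in Subsection~\ref{subsec:The_interpretation_of_the_Farrell-Jones_assembly_map_in_terms_of_controlled_topology}, and then achieve control by geometric means.

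The first step is to produce a controlled model of $\bfA$. Concretely, one replaces the Waldhausen category of retractive spaces over $B\calg$ by a Waldhausen category of retractive spaces over a metric $G$-space $X$ (for instance $X = G \times Z$ with $Z$ an appropriate $EG$-type space), where morphisms and cells are required to satisfy metric control conditions. Taking $K$-theory of this category and varying the control conditions yields two functors on $G$-spaces; without control conditions one recovers $\bfA(BG)$, while with equivariant continuous control one recovers the $G$-homology theory $H^G_*(-;\bfA)$. By Section~\ref{sec:The_universal_property}, once one verifies that this controlled model is homotopy invariant and agrees with the homotopy-theoretic assembly on $G/H$, the universal property identifies the forget-control map with the assembly map of Conjecture~\ref{con:FJC_A-theory}.

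Second, with the forget-control description in hand, one proves surjectivity (and then injectivity via the relative trick) by the standard package: choose a flow space associated to the geometry of $G$, use transfers along bundles with hyperbolic/$\operatorname{CAT}(0)$-fibers to spread cycles out, then use contracting equivariant open covers of $G \times X$ of controlled dimension to squeeze the support down. This is exactly the input that is available for hyperbolic and $\operatorname{CAT}(0)$-groups in the $K$- and $L$-theory setting, and for many groups it has already been made to work for $\bfA$ in the literature.

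The hardest part will be adapting the algebraic manipulations that underpin the $K$-theory arguments, such as Eilenberg swindles, cofinality, and the fibration theorem, to $\bfA$-theory, where the inputs are retractive spaces rather than modules. In particular the transfer step is more delicate because one needs compatibility of the $\bfA$-theoretic transfer with the flow-space argument, and one must ensure that the Waldhausen fibration theorem applies to the relevant subcategories defined by control conditions. Once these technical ingredients are in place, the conclusion — bijectivity for all $n \in \IZ$ — follows formally from the geometric surjectivity argument together with the relative version giving injectivity, exactly as in the proof scheme for Conjecture~\ref{con:FJC}.
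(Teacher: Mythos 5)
The item you were asked to prove is a conjecture, not a theorem: the paper offers no proof of it, and none exists. Conjecture~\ref{con:FJC_A-theory} is obtained purely by specializing the Meta-Isomorphism Conjecture of Section~\ref{sec:The_Meta-Isomorphism_Conjecture} to the functor $\bfA$ of~\eqref{bfA_Groupoid_to_Spectra} and the family $\calvcyc$. The only substantive content the paper supplies at this point is the construction of $\bfA \colon \Groupoids \to \Spectra$ (Vogell's non-connective delooping of Waldhausen's $A(X)$ composed with the classifying-space functor on groupoids) and the remark that it respects equivalences, so that the assembly map in the statement is well defined. There is therefore no proof in the paper for your attempt to be compared against, and a ``proof'' of the statement as literally formulated --- bijectivity for every group $G$ --- cannot be expected.

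With that caveat, your outline is a fair description of the strategy used in the literature to \emph{verify} the conjecture for particular classes of groups: identify the homotopy-theoretic assembly map with a forget-control map via the universal property of Section~\ref{sec:The_universal_property}, then achieve control using transfers, flow spaces and contracting equivariant covers. This is exactly how the fibered $A$-theoretic Farrell--Jones Conjecture is established for the class $\calfj$ of Theorem~\ref{the:status_of_the_Full_Farrell-Jones_Conjecture}, see the cited work of Enkelmann--L\"uck--Pieper--Ullmann--Winges and Kasprowski--Ullmann--Wegner--Winges. But be precise about scope: this program yields the conjecture only for groups admitting the requisite geometry (hyperbolic, finite-dimensional $\CAT(0)$, virtually solvable, and the groups reachable by the inheritance properties), not for an arbitrary $G$. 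Presenting it as a proof of Conjecture~\ref{con:FJC_A-theory} itself would be a category error; at best it is a proof scheme for membership in $\calfj$, and the conjecture in general remains open.
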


The $A$-theoretic Farrell-Jones Conjecture~\ref{con:FJC_A-theory} is an
important ingredient in the computation of the group of selfhomeomorphisms of an
aspherical closed manifold in the stable range using the machinery of
Weiss-Williams~\cite{Weiss-Williams(2001)}. Moreover, it is related to Whitehead spaces, 
pseudo-isotopy spaces and spaces of h-cobordisms, see for 
instance~\cite{Dwyer-Weiss-Williams(2003), Waldhausen(1978), Waldhausen(1985),
Waldhausen(1987a), Waldhausen(1987b), Jahren-Rognes-Waldhausen(2013),  Weiss-Williams(2001)}.

%%%%%%%%%%%%%%%%%%%%%%%%%%%%%%%%%%%%%%%%%%%%%%%%%%%%%%%%%%%%%%%%%%%%%%%%%%%%%%%%%

\subsection{Relating the assembly maps for $K$-theory and for $A$-theory}
\label{subsec:Relating_the_assembly_maps_for_K-theory_and_for_A-theory}

Let $X$ be a connected $CW$-complex with fundamental group $\pi = \pi_1(X)$.
 Essentially by passing to the cellular $\IZ \pi$-chain complex of the universal covering
one obtains a natural map of (non-connective) spectra, natural in $X$,
called \emph{linearization map}
\begin{eqnarray}
\bfL(X) \colon \bfA(X) & \to & \bfK(\IZ \pi_1(X)).
\label{bfl_bfA_to_bfK}
\end{eqnarray}

The next result follows by combining~\cite[Section~4]{Vogell(1991)} 
and~\cite[Proposition~2.2 and Proposition~2.3]{Waldhausen(1978)}.

\begin{theorem}[Connectivity of the linearization map]%
\label{the:Connectivity_of_the_linearization_map}
\index{assembly map!and the linearization map}
  Let $X$ be a connected $CW$-complex. Then:

\begin{enumerate}

\item\label{the:Connectivity_of_the_linearization_map:2_connected}
The linearization map $\bfL(X)$ of
  $\eqref{bfl_bfA_to_bfK}$ is $2$-connected, i.e., the map
\[
L_n := \pi_n(\bfL(X)) \colon A_n(X) \to K_n(\IZ \pi_1(X))
\]
is bijective for $n \le 1$ and surjective for $n = 2$;

\item\label{the:Connectivity_of_the_linearization_map:rational} 
Rationally the map $L_n$ is bijective for all $n \in \IZ$, provided
that $X$ is aspherical.
\end{enumerate}
\end{theorem}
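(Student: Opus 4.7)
The plan is to exhibit $\bfL(X)$ explicitly at the level of Waldhausen categories and compute homotopy groups in low degrees by direct inspection. Set $\pi = \pi_1(X)$. I would model $\bfA(X)$ as the $K$-theory spectrum of the Waldhausen category $\calr^f(X)$ of retractive spaces $X \to Y \to X$ obtained from $X$ by attaching finitely many cells, with cofibrations the cellular inclusions and weak equivalences the homotopy equivalences over $X$. I would model $\bfK(\IZ\pi)$ via the $K$-theory of the Waldhausen category of bounded chain complexes of finitely generated free $\IZ\pi$-modules, with cofibrations the degree-wise split monomorphisms and weak equivalences the quasi-isomorphisms. The exact functor sending a retractive space $Y$ to the relative cellular $\IZ\pi$-chain complex $C_*(\widetilde{Y},\widetilde{X})$ of the universal cover induces $\bfL(X)$ on $K$-theory.

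For part (1), I would compute $L_n$ directly for $n \le 2$. The map $L_0$ is the identity $\IZ \to \IZ$, detected on both sides by the Euler characteristic relative to $X$. For $L_1$, every automorphism of an object of $\calr^f(X)$ has a well-defined Whitehead-torsion class in $K_1(\IZ\pi)$; one checks that $L_1$ coincides with this torsion map, and the standard identification of $K_1$ as stably free automorphisms modulo elementary relations then shows $L_1$ is bijective. Surjectivity of $L_2$ is the substantive step: I would represent an element of $K_2(\IZ\pi)$ by a Steinberg relation among elementary matrices in $E(\IZ\pi)$ and lift it to a $2$-simplex in Waldhausen's $S_{\bullet}$-construction for $\calr^f(X)$ by realizing each elementary matrix as a cellular automorphism of a standard retractive space built by attaching trivial cells to $X$, then checking that the defining relations of the Steinberg group are realized up to homotopy over $X$.

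For part (2), assume $X$ aspherical with $\pi_1(X) = G$. I would analyze the homotopy fiber of $\bfL(X)$ rationally by combining the Dennis trace from $\bfA(BG)$ with the known rational splitting of $\bfA(X)$: for aspherical targets, the \emph{nonlinear} contribution to $\bfA(X)$ is detected rationally by invariants already visible to $\bfK(\IZ G)$, so the homotopy fiber of $\bfL(X)$ becomes rationally contractible. Equivalently, one identifies the fiber up to rational equivalence with a stabilization built from $X$ whose rational homotopy groups are controlled by $H_*(X;\IQ)$ together with the rational stable homotopy of spheres; asphericity of $X$ together with $\pi_n^s(\pt) \otimes \IQ = 0$ for $n \ge 1$ then forces the fiber to vanish rationally in all degrees, giving bijectivity of every $L_n \otimes \IQ$.

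The main obstacle is the $\pi_2$-surjectivity in part (1) and the rational fiber analysis in part (2). The former requires a geometric realization of Steinberg relations by cellular automorphisms together with verification that the resulting $2$-simplex represents the prescribed element of $K_2(\IZ\pi)$; the latter requires a genuine rational computation of $\bfA$-theory, carried out in~\cite{Waldhausen(1978)} via a splitting of $\bfA(X)$ whose extra summand is controlled by the stable homotopy of an auxiliary space built from $X$ and vanishes rationally precisely because $X$ is aspherical.
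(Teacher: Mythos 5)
The paper does not prove this theorem; it only cites Vogell~\cite{Vogell(1991)} (Section~4) for the non-connective deloopings and Waldhausen~\cite{Waldhausen(1978)} (Propositions~2.2 and~2.3) for the connectivity and rational statements. Measured against what those references actually supply, your proposal has two genuine gaps. First, the theorem is about the \emph{non-connective} spectra (the paper stresses this and introduces Vogell's delooping precisely for this reason), so ``bijective for $n \le 1$'' includes all negative $n$, and $\pi_0$ of both sides is $K_0(\IZ\pi)$, not $\IZ$. Your model by finite retractive spaces and finitely generated free modules produces connective spectra with $\pi_0\bfA(X) \cong \IZ$, and your claim that ``$L_0$ is the identity $\IZ \to \IZ$'' cannot be reconciled with the assertion that $L_0 \colon A_0(X) \to K_0(\IZ\pi)$ is bijective when $\widetilde{K}_0(\IZ\pi) \neq 0$; one needs the finitely dominated variant (where $\pi_0$ is detected by the Wall finiteness obstruction) and Vogell's compatible delooping to handle $n = 0$ and $n \le -1$. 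You say nothing at all about negative degrees, which is exactly the content of the Vogell citation. Second, your mechanism for $\pi_2$-surjectivity does not work as described: elements of $K_2$ are not represented by $2$-simplices of the $S_{\bullet}$-construction but by $2$-spheres in $\Omega|wS_{\bullet}\calr^f(X)|$, and ``realizing Steinberg relations by cellular automorphisms up to homotopy over $X$'' is precisely the point that needs proof, not a reduction of it. The standard argument (Waldhausen's) is structural: $\bfA(X)$ is the $K$-theory of the spherical group ring $\IS[\Omega X]$, the linearization $\IS[\Omega X] \to \IZ[\pi_1(X)]$ is a $1$-connected map of connective ring spectra (isomorphism on $\pi_0$, surjection on $\pi_1$), and $K$-theory raises connectivity by one; no explicit manipulation of Steinberg symbols is needed.

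Your outline for part~(2) is essentially the right argument and matches Waldhausen's: for aspherical $X = BG$ the linearization $\IS[G] \to \IZ G$ is a rational equivalence of ring spectra because $\pi_n^s \otimes \IQ = 0$ for $n \ge 1$, and this forces the relative $K$-theory to vanish rationally; the negative degrees then come for free from part~(1). But as written, part~(2) leans on the phrase ``invariants already visible to $\bfK(\IZ G)$'' without justifying why a rational equivalence of connective ring spectra induces a rational equivalence on $K$-theory, which is the one nontrivial input there. So: correct strategy for~(2) modulo that citation, but part~(1) as proposed is incomplete for $n \le 0$ and unconvincing for $n = 2$.
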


Thus one obtain a transformation $\bfL \colon \bfA \to \bfK_{\IZ}$ of covariant functors
$\Groupoids \to \Spectra$, where $\bfK_{\IZ}$  and $\bfA$ have been defined in~\eqref{Kalg_GROUPOIDS}
and~\eqref{bfA_Groupoid_to_Spectra}. It induces a commutative diagram
\begin{equation}
\xymatrix@!C=13em{%
H^G_n(\edub{G};\bfA)  \ar[r]^-{H_n^G(\pr;\bfA)} \ar[d]_{H^G_n(\id_{\edub{G}};\bfL)}
&
H^G_n(G/G;\bfA)  = \pi_n(\bfA(BG)) \ar[d]^{\pi_n(\bfL(BG))}
\\
H^G_n(\edub{G};\bfK_{\IZ}) \ar[r]_-{H_n^G(\pr;\bfK_R)}
& 
K_n( \IZ G)
}
\label{diagram_relating_A_to_K}
\end{equation}
where the upper horizontal arrow is the assembly map appearing
in $A$-theoretic Farrell-Jones Conjecture~\ref{con:FJC_A-theory},
the lower  horizontal arrow is the assembly map appearing in the 
$K$-theoretic Farrell-Jones Conjecture~\ref{con:FJC}, 
and both vertical arrows are bijective for $n \le 1$, surjective for $n = 2$ and rationally bijective for
all $n \in \IZ$. In particular the $K$-theoretic Farrell-Jones Conjecture~\ref{con:FJC} for $R = \IZ$
and the $A$-theoretic Farrell-Jones Conjecture~\ref{con:FJC_A-theory} are  rationally equivalent.

%%%%%%%%%%%%%%%%%%%%%%%%%%%%%%%%%%%%%%%%%%%%%%%%%%%%%%%%%%%%%%%%%%%%%%%%%%%%%%%%%

\subsection{The status of the Farrell-Jones Conjecture}
\label{subsec:The_status_of_the_Farrell-Jones_Conjecture}

There is a more general version of the Farrell-Jones Conjecture, the so called \emph{Full Farrell-Jones Conjecture},
where one allows coefficients in additive categories and the passage to finite wreath products,
It implies  the Farrell-Jones Conjectures~\ref{con:FJC}. For $A$-theory there is a so called \emph{fiibered} version which
implies Conjecture~\ref{con:FJC_A-theory}.
Let $\calfj$ be the class of groups for which the Full Farrell-Jones Conjecture and the fibered $A$-theoretic
Farrell-Jones Conjecture holds. Notice that then any group in $\calfj$ satisfies in particular
Conjectures~\ref{con:FJC} and~\ref{con:FJC_A-theory}.

\begin{theorem}[The class $\calfj$]%
\index{Farrell-Jones Conjecture!status}
\label{the:status_of_the_Full_Farrell-Jones_Conjecture}\

\begin{enumerate}

\item\label{the:status_of_the_Full_Farrell-Jones_Conjecture:Classes_of_groups}
The following classes of  groups belong to $\calfj$:
\begin{enumerate}

\item\label{the:status_of_the_Full_Farrell-Jones_Conjecture:Classes_of_groups:hyperbolic_groups}
Hyperbolic groups;

\item\label{the:status_of_the_Full_Farrell-Jones_Conjecture:Classes_of_groups:CAT(0)-groups}
Finite dimensional $\CAT(0)$-groups;

\item\label{the:status_of_the_Full_Farrell-Jones_Conjecture:Classes_of_groups:solvable}
Virtually solvable groups;

\item\label{the:status_of_the_Full_Farrell-Jones_Conjecture:Classes_of_groups:lattices}
  (Not necessarily cocompact) lattices in second countable locally compact Hausdorff
  groups with finitely many path components;

\item\label{the:status_of_the_Full_Farrell-Jones_Conjecture:Classes_of_groups:pi_low_dimensional}
  Fundamental groups of (not necessarily compact) connected manifolds (possibly with
  boundary) of dimension $\le 3$;

\item\label{the:status_of_the_Full_Farrell-Jones_Conjecture:Classes_of_groups:GL}
The groups $GL_n(\IQ)$ and $GL_n(F(t))$ for $F(t)$ the function field over a finite field $F$;

\item\label{the:status_of_the_Full_Farrell-Jones_Conjecture:Classes_of_groups:S-arthmetic}
$S$-arithmetic groups;

\item\label{the:status_of_the_Full_Farrell-Jones_Conjecture:Classes_of_groups:mappping_class_groups}
mapping class groups;

\end{enumerate}

\item\label{the:status_of_the_Full_Farrell-Jones_Conjecture:inheritance}
The class $\calfj$ has the following inheritance properties:

\begin{enumerate}

\item \emph{Passing to subgroups}\\
\label{the:status_of_the_Full_Farrell-Jones_Conjecture:inheritance:passing_to_subgroups}
Let $H \subseteq G$ be an inclusion of groups. 
If $G$ belongs to $\calfj$, then $H$ belongs to $\calfj$;

\item \emph{Passing to finite direct products}\\
\label{the:status_of_the_Full_Farrell-Jones_Conjecture:inheritance:Passing_to_finite_direct_products}
If the groups $G_0$ and $G_1$ belong to $\calfj$, then also $G_0 \times G_1$ belongs to $\calfj$;

\item \emph{Group extensions}\\
\label{the:status_of_the_Full_Farrell-Jones_Conjecture:inheritance:group_extensions}
Let $ 1 \to K \to G \to Q \to 1$ be an extension of groups. Suppose that for any cyclic subgroup
$C \subseteq Q$ the group $p^{-1}(C)$ belongs to $\calfj$ 
and that the group $Q$ belongs to $\calfj$.

Then $G$ belongs to $\calfj$;

\item \emph{Directed colimits}\\
\label{the:status_of_the_Full_Farrell-Jones_Conjecture:inheritance:directed_colimits}
Let $\{G_i \mid i \in I\}$ be a direct system of groups indexed by the directed set $I$
(with arbitrary structure maps). Suppose that for each $i \in I$ the group $G_i$ belongs to $\calfj$.

Then the colimit $\colim_{i \in I} G_i$ belongs to $\calfj$;

\item \emph{Passing to finite free products}\\
\label{the:status_of_the_Full_Farrell-Jones_Conjecture:inheritance:Passing_to_finite_free_products}
If  the groups $G_0$ and $G_1$ belong to $\calfj$,  then $G_0 \ast G_1 $ belongs to $\calfj$;

\item \emph{Passing to overgroups of finite index}\\
\label{the:status_of_the_Full_Farrell-Jones_Conjecture:Passing_to_over_groups_of_finite_index}
Let $G$ be an overgroup of $H$ with finite index $[G:H]$.
If $H$ belongs to $\calfj$, then $G$ belongs to $\calfj$;

\end{enumerate}

\end{enumerate}

\end{theorem}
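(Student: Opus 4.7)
The statement is an omnibus theorem compiling essentially the entire body of work on the Full Farrell-Jones Conjecture, so my plan is not to give a unified proof but rather to organize the references and explain how each item reduces to a specific published result. The overall structure would be to first handle part~(1) case by case, then handle the inheritance statements in part~(2), and finally note how one combines items from (1) with (2) to conclude, for instance, that $S$-arithmetic groups and $GL_n(F(t))$ fit in by bootstrapping from the hyperbolic and $\CAT(0)$ cases via extensions and subgroups.

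For part~(1), the plan is: hyperbolic groups follow from the work of Bartels--L\"uck--Reich for $K$-theory and Bartels--L\"uck for $L$-theory (extended to the full version with coefficients and wreath products); finite-dimensional $\CAT(0)$-groups follow from Bartels--L\"uck together with Wegner's treatment of the $K$-theoretic case; virtually solvable groups follow from Wegner and Farrell--Wu via the inheritance properties applied to virtually poly-$\IZ$ subgroups and directed colimits; lattices in locally compact second countable groups with finitely many components are handled by Bartels--Farrell--L\"uck and Kammeyer--L\"uck--R\"uping; three-manifold groups reduce via geometrization to hyperbolic and Seifert fibered pieces, combined with extensions and free products; $GL_n(\IQ)$, $GL_n(F(t))$ and $S$-arithmetic groups are the theorems of R\"uping and Bartels--Echterhoff--L\"uck, using that they are $\CAT(0)$ or build from $\CAT(0)$ blocks by the inheritance items; mapping class groups are due to Bartels--Bestvina. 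The hard part in each case is an entirely geometric input (flow spaces, contracting maps, open covers of appropriate dimension), but for the purpose of this survey I would simply cite these inputs.

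For part~(2), the plan is to appeal to the abstract inheritance results for the Full Farrell-Jones Conjecture formulated in additive-category language. Subgroups and finite direct products are essentially formal once one has coefficients in additive categories, as shown in Bartels--Echterhoff--L\"uck. Group extensions require the transitivity principle together with the observation that every virtually cyclic subgroup of $G$ maps to a cyclic subgroup of $Q$; this is one of the central results of Bartels--L\"uck. Directed colimits are handled via the continuity of equivariant homology and the fact that classifying spaces for families can be chosen compatibly along the colimit system, following Bartels--Echterhoff--L\"uck and Farrell--Linnell. Finite free products follow from treating $G_0 \ast G_1$ as an amalgamated product over the trivial group and appealing to the corresponding result for graphs of groups. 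Finally, passing to finite-index overgroups follows formally from the wreath-product version of the conjecture by embedding $G$ into $H \wr (G/H)$.

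The expected main obstacle is purely expository: ensuring that the precise formulation of the Full Farrell-Jones Conjecture used in each cited paper matches the one adopted here (coefficients in additive categories, wreath products, fibered version for $A$-theory). Rather than reprove any of the geometric inputs, I would limit myself to reviewing why each citation yields a group in $\calfj$ in the present sense, and indicate that a detailed verification is carried out in \cite{Lueck(2020book)}.
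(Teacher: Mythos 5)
Your proposal is correct and takes essentially the same route as the paper: the paper's proof consists solely of a citation list (Bartels--Bestvina, Bartels--Echterhoff--L\"uck, Bartels--Farrell--L\"uck, Bartels--L\"uck, Bartels--L\"uck--Reich, Bartels--L\"uck--Reich--R\"uping, Enkelmann et al., Kammeyer--L\"uck--R\"uping, Kasprowski et al., R\"uping, Wegner), and your item-by-item organization of these same sources, together with the remark that the detailed matching of formulations is deferred to the book in preparation, is exactly what is intended here.
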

\begin{proof}
See~\cite{Bartels-Bestvina(2016), Bartels-Echterhoff-Lueck(2008colim), Bartels-Farrell-Lueck(2014), Bartels-Lueck(2012annals),
Bartels-Lueck-Reich(2008hyper),  Bartels-Lueck-Reich-Rueping(2014),  Enkelmann-Lueck-Malte-Ullmann-Winges(2018),
Kammeyer-Lueck-Rueping(2016), Kasprowski-Ullmann-Wegner-Winges(2018),
Rueping(2016_S-arithmetic), Wegner(2012),Wegner(2015)}.
\end{proof}

It is not known whether all amenable groups belong to $\calfj$.

%%%%%%%%%%%%%%%%%%%%%%%%%%%%%%%%%%%%%%%%%%%%%%%%%%%%%%%%%%%%%%%%%%%%%%%%%%%%%%%%%
%%%%%%%%%%%%%%%%%%%%%%%%    The Baum-Connes Conjecture              %%%%%%%%%%%%%%%%%%%%%%%%%%%%%
%%%%%%%%%%%%%%%%%%%%%%%%%%%%%%%%%%%%%%%%%%%%%%%%%%%%%%%%%%%%%%%%%%%%%%%%%%%%%%%%%

\typeout{------------------------- The Baum-Connes Conjecture -------------------------------}

\section{The Baum-Connes Conjecture}
\label{sec:The_Baum-Connes_Conjecture}

%%%%%%%%%%%%%%%%%%%%%%%%%%%%%%%%%%%%%%%%%%%%%%%%%%%%%%%%%%%%%%%%%%%%%%%%%%%%%%%%%

\subsection{The Baum-Connes Conjecture}
\label{subsec:The_Baum-Connes_Conjecture}

Recall that the \emph{reduced group $C^*$-algebra $C^*_r(G)$}
is a certain completion of the complex group ring $\IC G$. Namely, there is a canonical embedding of
$\IC G$ into the space $B(l^2(G))$ of bounded operators $L^2(G) \to L^2(G)$ equipped with the supremums norm 
given by the right regular representation, and $C^*_r(G)$ is the norm closure of $\IC G$ in $B(L^2(G))$.
There is a covariant functor respecting equivalences
\begin{equation}
\bfK^{\topo}
\colon \Groupoids \to  \Spectra,
\label{topo_K-theory_reduced}
\end{equation}
such that for every group $G$ and all $n \in \IZ$ we have
\[
\pi_n(\bfK^{\topo}(G))  \cong  K_n^{\topo}(C^*_r(G)),
\]
where $K_n^{\topo}(C^*_r(G))$ is the topological $K$-theory of the reduced group $C^*$-algebra $C^*_r(G)$,
see~\cite{Joachim(2003a)}.
 If we now take this functors and the family $\calfin$ of finite  subgroups,
we obtain

\begin{conjecture}[Baum-Connes Conjecture]\label{con:BCC}
  A group $G$ satisfies the \emph{Baum-Connes  Conjecture}%
\index{Baum-Connes Conjecture}
 if 
  the assembly maps induced by the projection
  $\pr \colon \eub{G} \to G/G$
\[
  H_n^G(\pr;\bfK^{\topo}) \colon H^G_n(\eub{G};\bfK^{\topo}) 
\to 
H^G_n(G/G;\bfK^{\topo})  = K_n^{\topo}(C^*_r(G)).
\]
is bijective for all $n \in \IZ$.
\end{conjecture}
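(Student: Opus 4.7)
The plan is to use the universal property established in Section~\ref{sec:The_universal_property} to translate the homotopy-theoretic assembly map into an analytic one, and then attack it using equivariant $KK$-theory via the Dirac/dual-Dirac method of Kasparov. The statement is a conjecture, so this is a strategy rather than a complete proof, and it is the strategy by which Baum-Connes is currently established for many classes of groups.

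First, I would identify the left-hand side with an analytic index group. For a finite proper $G$-$CW$-complex $X$, elements of $H^G_n(X;\bfK^{\topo})$ can be represented by $G$-equivariant elliptic operators, or more generally by $KK$-classes, and passing to the colimit over such $X$ recovers $H^G_n(\eub{G};\bfK^{\topo})$. Under this identification the assembly map becomes the equivariant analytic index, sending an operator to its higher $C^*$-index in $K_n^{\topo}(C^*_r(G))$. That this analytic assembly coincides with the homotopy-theoretic one of Conjecture~\ref{con:BCC} follows from Lemma~\ref{lem:transformation_of_G-homology_theories}: both sides, viewed as functors $\GCWpairs \to \Spectra$, are excisive and agree on $G/H$ for finite $H$ (on both sides the value is the $K$-theory of $C^*_r(H)$ by Green-Julg), so the universal property forces them to agree everywhere.

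The heart of the approach is Kasparov's Dirac/dual-Dirac method: construct a $\gamma$-element $\gamma \in KK^G(\IC,\IC)$ factoring through a suitable proper $G$-$C^*$-algebra $A$ as $\gamma = \beta \otimes_A \alpha$, where $\alpha \in KK^G(A,\IC)$ is the Dirac class (obtained, say, from an equivariant Dirac operator on $\eub{G}$ or on a $G$-Hilbert space) and $\beta \in KK^G(\IC,A)$ is the dual-Dirac (a Bott class). Kasparov's descent functor $j_r \colon KK^G \to KK(C^*_r(G),C^*_r(G))$ then yields an element $j_r(\gamma)$ acting on $K^{\topo}_*(C^*_r(G))$. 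A standard diagram shows that the assembly map factors through the image of this action, and that $\gamma$ acts as the identity on the source $H^G_*(\eub{G};\bfK^{\topo})$ because that source is built from proper actions, on which $\gamma$ is canonically trivial. Hence bijectivity of the assembly map reduces to the equation $j_r(\gamma) = 1 \in KK(C^*_r(G),C^*_r(G))$.

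The hard part --- and the reason the conjecture remains open in full generality --- is producing $\gamma$ and then proving $j_r(\gamma) = 1$. For a-T-menable (Haagerup) groups this is the Higson-Kasparov theorem, where $\gamma$ is built from an infinite-dimensional Bott element associated to a proper isometric affine action on a Hilbert space. For groups acting properly and isometrically on bolic spaces or complete $\CAT(0)$-spaces one uses Kasparov-Skandalis constructions of $\gamma$ via a suitable geodesic flow and rotation homotopy. The fundamental obstruction is illustrated by property (T) groups, where $j_r(\gamma)$ genuinely need not equal $1$, and indeed counterexamples to the Baum-Connes conjecture with coefficients are known (Higson-Lafforgue-Skandalis); any truly general proof must therefore go beyond the naive Dirac/dual-Dirac framework, perhaps via a refinement to Banach $KK$-theory in the spirit of Lafforgue or through the forget-control viewpoint indicated in Subsection~\ref{subsec:The_interpretation_of_the_Farrell-Jones_assembly_map_in_terms_of_controlled_topology}.
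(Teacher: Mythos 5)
There is nothing to compare here in the usual sense: the statement you were asked about is the Baum--Connes Conjecture itself, which the paper states as an open conjecture and does not prove (and which is not known in general). You correctly recognize this and offer a strategy survey rather than a proof, and your account of the Dirac/dual-Dirac method --- existence of a $\gamma$-element factored through a proper $G$-$C^*$-algebra, descent, split injectivity from $\gamma$, and surjectivity from $j_r(\gamma)=1$, with the Higson--Kasparov theorem for a-T-menable groups and Lafforgue's Banach $KK$-theory beyond that --- is an accurate description of how the cases listed in Theorem~\ref{the:status_of_the_Baum-Connes_Conjecture_with_coefficients} are actually established. This is consistent with the analytic picture the paper sketches in Subsection~\ref{subsec:The_interpretation_of_the_Baum-Connes_assembly_map_in_terms_of_index_theory}.

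One step in your outline is substantially harder than you make it sound. You assert that the identification of the analytic (index-theoretic) assembly map with the homotopy-theoretic one of Conjecture~\ref{con:BCC} ``follows from Lemma~\ref{lem:transformation_of_G-homology_theories}.'' To invoke that lemma (or the universal property of Section~\ref{sec:The_universal_property}) you must first realize the Baum--Connes--Higson assembly map as a natural transformation of excisive spectrum-valued functors on $\GCWpairs$; the $KK$-theoretic construction only hands you maps of $K$-groups for each proper cocompact $X$, not a map of spectra natural in $X$. This lifting to the spectrum level is exactly the delicate point the paper flags: it was attempted in the unfinished preprint of Carlsson--Pedersen--Roe, and complete identifications were only later given by Hambleton--Pedersen and Mitchener. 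So the comparison of the two assembly maps is a genuine theorem with nontrivial content, not a formal consequence of the comparison lemma. With that caveat, your proposal is a fair account of the state of the art on a statement that remains conjectural.
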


The original version of the Baum-Connes Conjecture is stated in~\cite[Conjecture~3.15 on
page~254]{Baum-Connes-Higson(1994)}.  There is also a version, where the ground field
$\IC$ is replaced by $\IR$.  The complex version of the Baum-Connes
Conjecture~\ref{con:BCC} implies automatically the real version,
see~\cite{Baum-Karoubi(2004),Schick(2004real_versus_complex)}.

%%%%%%%%%%%%%%%%%%%%%%%%%%%%%%%%%%%%%%%%%%%%%%%%%%%%%%%%%%%%%%%%%%%%%%%%%%%%%%%%%

\subsection{The Baum-Connes Conjecture with coefficients}
\label{subsec:The_Baum-Connes_Conjecture_with_coefficients}

There is also a more general version of the Baum-Connes
Conjecture~\ref{con:BCC}, where one allows twisted coefficients. However,
there are counterexamples to this more general version, see~\cite[Section
7]{Higson-Lafforgue-Skandalis(2002)}.  There is a new formulation of the Baum-Connes
Conjecture with coefficients in~\cite{Baum-Guentner-Willet(2016expanders)}, where these
counterexamples do not occur anymore. At the time of writing no counterexample to the
Baum-Connes Conjecture~\ref{con:BCC} or to the version 
of~\cite{Baum-Guentner-Willet(2016expanders)}  is known to the author.

%%%%%%%%%%%%%%%%%%%%%%%%%%%%%%%%%%%%%%%%%%%%%%%%%%%%%%%%%%%%%%%%%%%%%%%%%%%%%%%%%

\subsection{The interpretation of the Baum Connes assembly map in terms of index  theory}
\label{subsec:The_interpretation_of_the_Baum-Connes_assembly_map_in_terms_of_index_theory}

For applications of the Baum-Connes Conjecture~\ref{con:BCC} it is essential
that the Baum-Connes assembly maps can be interpreted in terms of indices of equivariant
operators with values in $C^*$-algebras. Namely, one assigns to a Kasparov cycle
representing an element in the equivariant $KK$-group $KK^G_n(C_0(X),\IC)$ in the sense of
Kasparov~\cite{Kasparov(1984), Kasparov(1988), Kasparov(1995)} its
$C^*$-valued index in $K_n(C^*_r(G))$ in the sense of
Mishchenko-Fomenko~\cite{Mishchenko-Fomenko(1980)}, thus defining a map
\[
KK_n^G(C_0(X),\IC) \to K_n^{\topo}(C^*_r(G)),
\]
provided that $X$ is proper and cocompact and $C_0(X)$ is the $C^*$-algebra (possibly
without unit) of continuous function $X \to \IC$ vanishing at infinity.  This is the
approach appearing in~\cite{Baum-Connes-Higson(1994)}.\index{assembly map!in terms of index theory}

The other equivalent approach is based on the Kasparov product.  Given a proper cocompact
$G$-$CW$-complex $X$, one can assign to it an element
$[p_X] \in KK^G_0(\IC,C_0(X) \rtimes_r G)$, where $C_0(X) \rtimes_r G$ denotes the
reduced crossed product $C^*$-algebra associated to the $G$-$C^*$-algebra $C_0(X)$. Now
define the Baum-Connes assembly map by the composition of a descent map and a map coming
from the Kasparov product
\begin{multline*}
KK_n^G(C_0(X),\IC)  \xrightarrow{j_r^G} KK_n(C_0(X) \rtimes_r G,C^*_r(G))
\\
\xrightarrow{[p_X] \widehat{\otimes}_{C_0(X) \rtimes_r G} \, -} KK_n(\IC,C^*_r(G)) = K_n^{\topo}(C^*_r(G)).
\end{multline*}
This extends to arbitrary proper $G$-$CW$-complexes $X$ by
defining the source by
\[
K_n^G(C_0(X),\IC) := \colim_{C \subseteq X} K_n^G(C_0(C),\IC),
\]
where $C$ runs through the finite $G$-$CW$-subcomplexes of $Y$ directed
by inclusion. Hence we can take $X = \eub{G}$ above without assuming any
finiteness conditions on $\eub{G}$. For some information about these two
approaches and their identification, at least for torsionfree $G$, we refer
to~\cite{Land(2015)}.

One can identify the original assembly map of~\cite{Baum-Connes-Higson(1994)}
with the assembly map appearing in Conjecture~\ref{con:BCC} using Section~\ref{con:BCC} 
and the fact that  
\[
\colim_{C \subseteq X} H_n^G(C;\bfK^{\topo})  \xrightarrow{\cong} H_n^G(X;\bfK^{\topo})
\]
is an isomorphism. This is explained in~\cite[page~247-248]{Davis-Lueck(1998)},
Unfortunately, the proof is based on an unfinished preprint by
Carlsson-Pedersen-Roe~\cite{Carlsson-Pedersen-Roe(1996)}, where the assembly map appearing
in~\cite[Conjecture~3.15 on page~254]{Baum-Connes-Higson(1994)} is implemented on the
spectrum level.  Another proof of the identification is given
in~\cite[Corollary~8.4]{Hambleton-Pedersen(2004)} and~\cite[Theorem~1.3]{Mitchener(2004)}.

%%%%%%%%%%%%%%%%%%%%%%%%%%%%%%%%%%%%%%%%%%%%%%%%%%%%%%%%%%%%%%%%%%%%%%%%%%%%%%%%%

\subsection{Applications of the Baum-Connes Conjecture}
\label{subsec:Applications_of_the_Baum-Connes_Conjecture}

\subsubsection{Computations}
One can carry out \emph{explicite computations} of topological $K$-groups of group $C^*$-algebras and related $C^*$-algebras
by applying methods from algebraic topology to the left side given by a $G$-homology theory and
by finding small models for the classifying spaces of families using the  topology and geometry of groups.
This leads to classification results about certain $C^*$-algebras, see for 
instance~\cite{Davis-Lueck(2013),Echterhoff-Lueck-Phillips-Walters(2010), Langer-Lueck(2012_K-theory),Li-Lueck(2012)}.

\subsubsection{(Modified) Trace Conjecture}
The Baum-Connes Conjecture~\ref{con:BCC} implies the \emph{Trace Conjecture for torsionfree groups}%
\index{Trace Conjecture}
that for a torsionfree group $G$ the image of
\[
\tr_{C^*_r(G)} \colon K_0(C^*_r(G)) \to \IR
\]
consists of the integers. If one drops the condition torsionfree, there is the so called \emph{Modified Trace Conjecture},
which is implied by  Baum-Connes Conjecture~\ref{con:BCC}, see~\cite{Lueck(2002d)}.

\subsubsection{Kadison Conjecture} 
The Baum-Connes Conjecture~\ref{con:BCC} implies the \emph{Kadison Conjecture}%
\index{Kadison Conjecture}
that for a torsionfree group $G$  the only idempotent elements in $C^*_r(G)$ are $0$ and $1$.

\subsubsection{Novikov Conjecture} 
The Baum-Connes Conjecture~\ref{con:BCC} implies the \emph{Novikov Conjecture}.

\subsubsection{The Zero-in-the-spectrum Conjecture} 
The \emph{Zero-in-the-spectrum Conjecture}%
\index{Zero-in-the-spectrum Conjecture}
 says,
if $\widetilde{M}$ is the universal covering of an
aspherical closed Riemannian manifold $M$, then  zero is  in the spectrum of the minimal closure
of the $p$th Laplacian on $\widetilde{M}$  for some $p \in \{  0, 1, \dots , \dim M \}$.
It is a consequence of the Strong Novikov Conjecture and hence of the 
Baum-Connes Conjecture~\ref{con:BCC}, see~\cite[Chapter~12]{Lueck(2002)}.

\subsubsection{The Stable Gromov-Lawson-Rosenberg Conjecture}
Let $\Omega^{\Spin}_n(BG)$ be the bordism group of closed
$\Spin$-manifolds $M$ of dimension n with a reference map to
$BG$. Given an element $[u \colon M \to BG] \in \Omega^{\Spin}_n(BG)$,
we can take the $C^*_r(G;\IR)$-valued index of the equivariant Dirac operator 
associated to the $G$-covering $\overline{M} \to M$ determined by $u$. Thus we get
a homomorphism
\begin{eqnarray} \ind_{C^*_r(G;\IR)} \colon \Omega^{\Spin}_n(BG) & \to & K^{\topo}_n(C^*_r(G;\IR)). 
\label{index_colon_OmegaSPin_n_to_Ktopo_n(Cast_r(G;bbR))}
\end{eqnarray}
A \emph{Bott manifold}%
\index{Bott manifold}
is any simply connected closed $\Spin$-manifold $B$ of dimension $8$ whose
$\widehat{A}$-genus $\widehat{A}(B)$ is $1$. 
We fix such a choice, the particular choice
does not matter for the sequel. Notice that 
$\ind_{C^*_r(\{1\};\IR)}(B) \in K^{\topo}_8(\IR) \cong \IZ$ is a generator
and the product with this element induces the Bott periodicity isomorphisms
$K^{\topo}_n(C^*_r(G;\IR)) \xrightarrow{\cong}  K^{\topo}_{n+8}(C^*_r(G;\IR))$. 
In particular 
\begin{eqnarray}
\ind_{C^*_r(G;\IR)}(M) & = & \ind_{C^*_r(G;\IR)}(M \times B),
\label{ind(M)_is_ind(M_times_B)}
\end{eqnarray}
if we identify $K^{\topo}_n(C^*_r(G;\IR)) = K^{\topo}_{n+8}(C^*_r(G;\IR))$ via Bott periodicity.

\begin{conjecture}[Stable Gromov-Lawson-Rosenberg Conjecture]
\label{con:Stable_Gromov-Lawson-Rosenberg_Conjecture}
\index{Gromov-Lawson-Rosenberg Conjecture}
Let $M$ be a closed connected $\Spin$-manifold of dimension $n \ge 5$. 
Let $u_M \colon M \to B\pi_1(M)$ be the classifying
map of its universal covering.
Then $M \times B^k$ carries for some integer $k \ge
0$ a Riemannian metric with positive scalar curvature if and only if 
\[
\ind_{C^*_r(\pi_1(M);\IR)}([M,u_M])  =  0 \quad  \in K^{\topo}_n(C^*_r(\pi_1(M);\IR)).
\]
\end{conjecture}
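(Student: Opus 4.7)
The plan is to establish the two implications of Conjecture~\ref{con:Stable_Gromov-Lawson-Rosenberg_Conjecture} separately and to reduce the nontrivial direction to injectivity of the real Baum-Connes assembly map. The ``only if'' direction is classical and unconditional: if $M \times B^k$ admits a metric of positive scalar curvature, then by the Lichnerowicz-Rosenberg argument the Dirac operator on the universal covering, viewed as an unbounded operator on a Hilbert $C^*_r(\pi_1(M); \IR)$-module, is invertible by the Weitzenb\"ock formula, so its Mishchenko-Fomenko index vanishes. Combining this with the Bott-periodicity identity~\eqref{ind(M)_is_ind(M_times_B)} immediately yields $\ind_{C^*_r(\pi_1(M); \IR)}([M, u_M]) = 0$ and disposes of the easy half.

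For the converse I would set $G = \pi_1(M)$ and factor the index homomorphism~\eqref{index_colon_OmegaSPin_n_to_Ktopo_n(Cast_r(G;bbR))} as
\[
\Omega^{\Spin}_n(BG) \xrightarrow{D} KO_n(BG) \xrightarrow{\asmb} K^{\topo}_n(C^*_r(G; \IR)),
\]
where $D$ sends $[N,u]$ to $u_*([N]_{KO})$, the pushforward of the $KO$-theoretic fundamental class produced by the Atiyah-Bott-Shapiro orientation, and $\asmb$ is the assembly map of the real version of the Baum-Connes Conjecture~\ref{con:BCC}. The key geometric input is then a theorem of Stolz, which asserts that for $n \ge 5$ the class $[M, u_M]$ lies in the kernel of $D$ if and only if $M \times B^k$ admits a metric of positive scalar curvature for some $k \ge 0$. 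Proved via Gromov-Lawson surgery together with a careful analysis of the map $\Omega^{\Spin}_*(BG) \to ko_*(BG)$, this is the crucial bridge converting a $KO$-homological obstruction into honest scalar curvature geometry.

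Assuming that $G$ satisfies the Baum-Connes Conjecture~\ref{con:BCC}, or even merely the Strong Novikov Conjecture (which guarantees the injectivity of $\asmb$), the hypothesis $\ind_{C^*_r(G; \IR)}([M, u_M]) = \asmb(D([M, u_M])) = 0$ forces $D([M, u_M]) = 0$, and Stolz' theorem then produces the $k$ and the psc metric on $M \times B^k$. In this way Conjecture~\ref{con:Stable_Gromov-Lawson-Rosenberg_Conjecture} follows for all groups in the classes for which injectivity of the real Baum-Connes assembly map has been established. The main obstacle will be Stolz' theorem itself: identifying the analytic index with the homotopy-theoretic assembly map via the universal property of Section~\ref{sec:The_universal_property} and then exploiting injectivity of $\asmb$ are formal, but passing from the vanishing of a class in $KO_n(BG)$ to the existence of an actual psc metric on $M \times B^k$ requires substantial surgery-theoretic machinery and a careful use of the Bott manifold to absorb the $8$-periodicity of real topological $K$-theory.
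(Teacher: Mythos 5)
What you were asked to prove is a \emph{conjecture}, and the paper offers no proof of it: Conjecture~\ref{con:Stable_Gromov-Lawson-Rosenberg_Conjecture} is open in general (and its unstable analogue is actually false by Schick's counterexample, as the paper notes). What the paper does record --- citing Stolz for a sketch of the argument --- is the conditional statement Theorem~\ref{the:BCC_implies_SGLR}: \emph{if} the real Baum-Connes assembly map is injective for $G$, then the stable Gromov-Lawson-Rosenberg Conjecture holds for all closed $\Spin$-manifolds of dimension $\ge 5$ with fundamental group $G$. Your proposal is, in substance, a correct outline of exactly that conditional implication: the easy direction via Lichnerowicz plus the Bott-periodicity identity~\eqref{ind(M)_is_ind(M_times_B)}, the factorization of the index map~\eqref{index_colon_OmegaSPin_n_to_Ktopo_n(Cast_r(G;bbR))} through $KO_n(BG)$ followed by the assembly map, Stolz's surgery-theoretic theorem identifying the kernel of the orientation map with the stably-psc bordism classes, and finally injectivity of assembly to pull the vanishing back from $K^{\topo}_n(C^*_r(G;\IR))$ to $KO_n(BG)$. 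This matches the route the paper points to.

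The gap, then, is not in the mechanics of your argument but in what it establishes: you have not proved the Conjecture, only its reduction to injectivity of the real Baum-Connes assembly map, which is an unproven hypothesis for a general group $G = \pi_1(M)$. You acknowledge this in your last paragraph, but it should be stated up front that the unconditional statement is out of reach and that your text is a proof of Theorem~\ref{the:BCC_implies_SGLR} rather than of Conjecture~\ref{con:Stable_Gromov-Lawson-Rosenberg_Conjecture}. Two smaller points worth tightening if you write this up as a proof of the conditional theorem: Stolz's theorem is naturally stated for the connective real $K$-homology $ko_n(BG)$ with the Bott class inverted, and passing from there to the periodic group $KO_n(BG)$ (especially when $BG$ is not a finite complex) needs a word; and the identification of the analytic Mishchenko-Fomenko index with the homotopy-theoretic assembly map of Conjecture~\ref{con:BCC} is exactly the kind of comparison the universal property of Section~\ref{sec:The_universal_property} is designed for, but it is not automatic and deserves an explicit citation.
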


If $M$ carries a Riemannian metric with positive scalar curvature, then the index of the
Dirac operator must vanish by the Bochner-Lichnerowicz formula~\cite {Rosenberg(1986b)}.
The converse statement that the vanishing of the index implies the existence of a
Riemannian metric with positive scalar curvature is the hard part of the conjecture. The unstable
version of Conjecture~\ref{con:Stable_Gromov-Lawson-Rosenberg_Conjecture}, where one does
not stabilize with $B^k$, is not true in general, see~\cite{Schick(1998e)}.

 A sketch of the proof of the following result can be found in Stolz~\cite[Section~3]{Stolz(2002)}.  

\begin{theorem}[The Baum-Connes Conjecture implies the Stable Gromov-Lawson-Rosenberg Conjecture]
\label{the:BCC_implies_SGLR}
If the assembly map for the real version of the Baum-Connes Conjecture~\ref{con:BCC}
is injective for the group $G$, then
the Stable Gromov-Lawson-Rosenberg Conjecture~\ref{con:Stable_Gromov-Lawson-Rosenberg_Conjecture} 
is true for all closed $\Spin$-manifolds of dimension $\ge 5$ with $\pi_1(M) \cong G$.
\end{theorem}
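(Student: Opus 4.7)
The plan is to combine a factorization of the $C^*_r(G;\IR)$-valued Dirac index through the real Baum--Connes assembly map with Stolz's bordism-theoretic conversion of algebraic vanishing into geometric positive scalar curvature via equivariant surgery.

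First, for the easy direction, I would observe that if $M \times B^k$ admits a PSC metric, then the Bochner-Lichnerowicz-Weitzenb\"ock formula, extended to $C^*_r$-valued coefficients by Mishchenko-Fomenko and Rosenberg, forces $\ind_{C^*_r(G;\IR)}([M \times B^k]) = 0$; combined with~\eqref{ind(M)_is_ind(M_times_B)} and Bott periodicity this yields $\ind_{C^*_r(G;\IR)}([M,u_M]) = 0$.

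For the converse, assuming both that the real Baum--Connes assembly map for $G$ is injective and that $\ind_{C^*_r(G;\IR)}([M,u_M]) = 0$, the crucial first step I would carry out is to express the bordism-level index~\eqref{index_colon_OmegaSPin_n_to_Ktopo_n(Cast_r(G;bbR))} as the composition
\[
\Omega_n^{\Spin}(BG) \xrightarrow{\alpha^G} H_n^G\bigl(\eub{G}; \bfE\bigr) \xrightarrow{H_n^G(\pr;\bfE)} K_n^{\topo}(C^*_r(G;\IR)),
\]
where $\bfE \colon \OrG \to \Spectra$ models real topological $K$-theory of reduced groupoid $C^*$-algebras, the right-hand arrow is the real Baum--Connes assembly map, and $\alpha^G$ sends a class $[N, v \colon N \to BG]$ to the pushforward of the equivariant real $K$-homological fundamental class of the free proper Spin $G$-manifold $\widetilde N$ along the essentially unique $G$-map $\widetilde N \to \eub{G}$. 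Commutativity of this diagram is the Kasparov product interpretation of the assembly map recalled in Subsection~\ref{subsec:The_interpretation_of_the_Baum-Connes_assembly_map_in_terms_of_index_theory}, so injectivity of the assembly map upgrades the vanishing of the $C^*$-index to $\alpha^G([M,u_M]) = 0$.

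The main obstacle, which is geometric and independent of Baum--Connes, is Stolz's bordism-theoretic theorem from~\cite{Stolz(2002)}: for $n \ge 5$, a class $[N,v] \in \Omega_n^{\Spin}(BG)$ lies in $\ker(\alpha^G)$ if and only if $N \times B^k$ admits a PSC metric for some $k \ge 0$. Its proof uses equivariant surgery together with the Gromov-Lawson-Schoen-Yau construction of PSC metrics under codimension $\ge 3$ surgery, traversing a $\Spin$-bordism witnessing the vanishing of $\alpha^G$ and transporting the PSC property back to $M \times B^k$. Granting this theorem, applying it to our class $[M,u_M]$, which by the previous step lies in $\ker(\alpha^G)$, yields the desired PSC metric on $M \times B^k$ for some $k \ge 0$ and finishes the proof.
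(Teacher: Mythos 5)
The paper offers no proof of this statement beyond the citation of Stolz~\cite[Section~3]{Stolz(2002)}, and your outline is a faithful reconstruction of exactly that argument: the Lichnerowicz vanishing for the easy direction, the factorization of the Mishchenko--Fomenko index through the real assembly map via the pushed-forward equivariant $KO$-fundamental class on $\eub{G}$, injectivity to pull the vanishing back to $H_n^G(\eub{G};\bfK^{\topo}_{\IR})$, and Stolz's geometric theorem (Gromov--Lawson--Schoen--Yau surgery, Jung's bordism argument, the $\IH\IP^2$-bundle transfer, and the identification of stabilization by $B$ with inverting the Bott class, i.e.\ passing from $ko$ to $KO$) to produce the metric. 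One small caution: you state the geometric input as an unconditional equivalence between membership in $\ker(\alpha^G)$ and stable existence of positive scalar curvature, whereas only the implication $\ker(\alpha^G)\Rightarrow$ stably PSC is what Stolz proves and what your argument uses; the converse at the level of $H_n^G(\eub{G};\bfK^{\topo}_{\IR})$ is not obviously independent of Baum--Connes, since the Lichnerowicz argument only yields vanishing after composing with the assembly map into $K_n^{\topo}(C^*_r(G;\IR))$.
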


% \subsubsection{Homotopy invariance of $L^2$-Rho-invariants and $L^2$-signature theorems}
% Results about the homotopy invariance of $L^2$-Rho-invariants and $L^2$-signature theorems 
% based on the Baum-Connes Conjecture can be found in~\cite{Chang-Weinberger(2003b), Keswani(1998a), Keswani(2000),
% Lueck-Schick(2003), Lueck-Schick(2005)}.

\subsubsection{Knot theory} Cochran-Orr-Teichner give in~\cite{Cochran-Orr-Teichner(2003)} new obstructions
for a knot to be slice which are sharper than the Casson-Gordon
invariants. They use $L^2$-signatures and the Baum-Connes Conjecture~\ref{con:BCC}. 
We also refer to the survey article~\cite{Cochran(2004)} about non-commutative geometry and knot theory.

%%%%%%%%%%%%%%%%%%%%%%%%%%%%%%%%%%%%%%%%%%%%%%%%%%%%%%%%%%%%%%%%%%%%%%%%%%%%%%%%%

\subsection{The status of the Baum-Connes Conjecture}
\label{subsec:The_status_of_the_Baum-Connes_Conjecture}

Let $\calbc$ be the class of groups for which the
Baum-Connes Conjecture with coefficients, which implies
the Baum-Connes Conjecture~\ref{con:BCC}, is true.
\index{Baum-Connes Conjecture!status}

\begin{theorem}[Status of the  Baum-Connes Conjecture~\ref{con:BCC}]
\label{the:status_of_the_Baum-Connes_Conjecture_with_coefficients}\

\begin{enumerate}

\item\label{the:status_of_the_Baum-Connes_Conjecture_with_coefficients:groups_in_calbc}
The following classes of groups belong to $\calbc$.

\begin{enumerate}

\item\label{the:status_of_the_Baum-Connes_Conjecture_with_coefficients:groups_in_calbc:a-T-menable}
A-T-menable groups;

\item\label{the:status_of_the_Baum-Connes_Conjecture_with_coefficients:groups_in_calbc:hyperbolic}
Hyperbolic groups;

\item\label{the:status_of_the_Baum-Connes_Conjecture_with_coefficients:groups_in_calbc:one-relator_groups}
One-relator groups;

\item\label{the:status_of_the_Baum-Connes_Conjecture_with_coefficients:groups_in_calbc:3-manifolds}
Fundamental groups of  compact $3$-manifolds (possibly with boundary);

\end{enumerate}

\item 
\label{the:status_of_the_Baum-Connes_Conjecture_with_coefficients:inheritance}
The class $\calbc$ has the following inheritance properties:

\begin{enumerate}

\item \emph{Passing to subgroups}\\
\label{the:status_of_the_Baum-Connes_Conjecture_with_coefficients:inheritance:subgroups}
Let $H \subseteq G$ be an inclusion of groups. 
If $G$ belongs to $\calbc$, then $H$ belongs to $\calbc$;

\item \emph{Passing to finite direct products}\\
\label{the:status_of_the_Baum-Connes_Conjecture_with_coefficients:inheritance:direct_products}
If the groups $G_0$ and $G_1$ belong to $\calbc$, the also $G_0 \times G_1$ belongs to $\calbc$;

\item \emph{Group extensions}\\
\label{the:status_of_the_Baum-Connes_Conjecture_with_coefficients:inheritance:group_extensions}
Let $ 1 \to K \to G \to Q \to 1$ be an extension of groups. Suppose that for any finite subgroup
$F \subseteq Q$ the group $p^{-1}(F)$ belongs to $\calbc$ 
and that the group $Q$ belongs to $\calbc$.

Then $G$ belongs to $\calbc$;

\item \emph{Directed unions}\\
\label{the:status_of_the_Baum-Connes_Conjecture_with_coefficients:inheritance:directed_unions}
Let $\{G_i \mid i \in I\}$ be a direct system of subgroups of $G$ indexed by the directed set $I$
such that $G = \bigcup_{i \in I} G_i$. Suppose that $G_i$ belongs to $\calbc$ for every $i \in I$.

Then $G$ belongs to $\calbc$;

\item \emph{Actions on trees}\\
\label{the:status_of_the_Baum-Connes_Conjecture_with_coefficients:inheritance:actions_on_trees}
Let $G$ be a countable discrete group acting without inversion on a tree $T$. 
Then $G$ belongs to $\calbc$
if and only if  the stabilizers of each of the  vertices of $T$ belong to $\calbc$. 

In particular $\calbc$ is closed  under amalgamated products and HNN-extensions. 

\end{enumerate}

\end{enumerate}
\end{theorem}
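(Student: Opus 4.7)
The plan is to treat this as a compilation theorem, since each clause reflects a separate deep result in the literature. My overall strategy is to first dispose of part (1), the list of groups in $\calbc$, by invoking the appropriate analytic proofs, and then treat part (2), the inheritance properties, by a combination of elementary reductions and the continuous-field machinery of Chabert--Echterhoff.

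For part (1)(a), I would invoke the Dirac-dual Dirac method of Higson--Kasparov, which constructs an explicit Fredholm module using the Haagerup property and proves the conjecture with arbitrary coefficients; note that amenable groups and countable subgroups of $SO(n,1)$ are automatically A-T-menable. For (1)(b), the hyperbolic case, the relevant ingredients are Lafforgue's work on Banach $KK$-theory (which handles property (T) obstructions) together with Mineyev--Yu's construction of an equivariant bicombing, extended to the coefficient version in work of Lafforgue and later authors. For (1)(c), one uses Bass--Serre theory: a one-relator group acts on a tree with a-T-menable vertex stabilizers, so that (1)(a) combined with the action-on-trees permanence property (2)(e) applies. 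For (1)(d), geometrization reduces closed $3$-manifold groups to fundamental groups of Seifert fibered or hyperbolic pieces and their amalgams along tori, which fall under the previous cases and the tree-action principle; the boundary case is reduced to the free-product case.

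For part (2), the inheritance properties are largely formal once one has a good model for the assembly map. Clause (a) is immediate: any proper $G$-$CW$-complex restricts to a proper $H$-$CW$-complex, and one has to check compatibility of the assembly map under restriction of scalars for coefficient $C^*$-algebras. Clause (b) is a special case of (c), proved using the descent and induction formalism of Chabert--Echterhoff for crossed products, which shows that if the fibers over cyclic (in fact over a sufficient family of) subgroups satisfy the conjecture and the quotient does, then so does the total group. Clause (d) follows from the continuity of topological $K$-theory of $C^*$-algebras under colimits, combined with the continuity of the $G$-homology source $H^G_\ast(\eub{G};\bfK^{\topo})$ under directed unions; here one has to be careful that directed unions, not arbitrary directed colimits, preserve the class $\calbc$, and this matches the asymmetry with the corresponding clause in Theorem~\ref{the:status_of_the_Full_Farrell-Jones_Conjecture}. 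Clause (e) is Oyono-Oyono's theorem on groups acting on trees, whose proof rests on an equivariant Mayer--Vietoris sequence built from the Bass--Serre decomposition.

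The main obstacle, and the reason this "proof" is really a guide through the literature, is that essentially every original reference is formulated in Kasparov's analytic $KK$-framework, whereas the statement here is phrased in the homotopy-theoretic language of assembly maps associated to the $\Or(G)$-spectrum $\bfK^{\topo}$. To bridge this gap one must invoke the identification sketched in Subsection~\ref{subsec:The_interpretation_of_the_Baum-Connes_assembly_map_in_terms_of_index_theory}, using the universal property of Section~\ref{sec:The_universal_property} to guarantee that the analytically defined index map agrees with the homotopy-theoretic assembly map up to natural weak equivalence. Once this identification is in place, each clause above reduces to citing the corresponding original paper.
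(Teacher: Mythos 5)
Your proposal is correct and matches the paper's approach: the paper's ``proof'' of this theorem is nothing more than a list of citations (Higson--Kasparov, Lafforgue, Mineyev--Yu, Chabert--Echterhoff, Oyono-Oyono, Bartels--Echterhoff--L\"uck), and your roadmap assigns essentially these same references to the corresponding clauses, with the identification of the analytic and homotopy-theoretic assembly maps handled exactly as the paper does in its index-theory subsection. Your write-up is in fact more detailed than the paper's own justification.
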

\begin{proof}See~\cite{Bartels-Echterhoff-Lueck(2008colim),Chabert-Echterhoff(2001b),
Higson-Kasparov(2001), Lafforgue(2012),Mineyev-Yu(2002),Oyono-Oyono(2001), Oyono-Oyono(2001b)}.
\end{proof}

It is not known whether finite-dimensional CAT(0)-groups  and $SL_n(\IZ)$ for $n \ge 3$ belong to $\calbc$.

For more information about the Baum-Connes Conjecture and its applications we refer for instance 
to~\cite{{Baum-Connes-Higson(1994)}, Higson(1998a), Higson-Roe(2005surgeryI),Higson-Roe(2005surgeryII),Higson-Roe(2005surgeryIII),
Lueck(2020book), Lueck-Reich(2005), Mislin-Valette(2003), Piazza-Schick(2007rho), Rosenberg(2016),Schick(2004), Valette(2002)}.

%%%%%%%%%%%%%%%%%%%%%%%%%%%%%%%%%%%%%%%%%%%%%%%%%%%%%%%%%%%%%%%%%%%%%%%%%%%%%%%%%

\subsection{Relating the assembly maps of Farrell-Jones to the one of Baum-Connes}
\label{subsec:Ralating_the_assembly_maps_of_Farrell-Jones_to_the_one_of_Baum-Connes}

One can construct 
the following commutative diagram
\index{assembly map!relating the Farrell-Jones assembly map to the Baum-Connes assembly map}

\begin{equation}
\xymatrix{%
H_n^G(\eub{G};\bfL^{\langle - \infty \rangle} _{\IZ}) [1/2] \ar[r]  \ar[d]_l^{\cong}
& 
L^{\langle -\infty \rangle}_n(\IZ G)[1/2] \ar[d]_{\id}^{\cong}
\\
H_n^G(\eub{G};\bfL^{\langle - \infty \rangle} _{\IZ} [1/2]) \ar[r]
& 
L^{\langle -\infty \rangle}_n(\IZ G)[1/2] 
\\
H_n^G(\eub{G};\bfL^p_{\IZ} [1/2]) \ar[d]_{i_1}^{\cong}  \ar[u]^{i_0}_{\cong} \ar[r] 
& 
L^p_n(\IZ G)[1/2] \ar[d]^{j_1}_{\cong} \ar[u]_{j_0}^{\cong}
\\
H_n^G(\eub{G};\bfL^p_{\IQ} [1/2])\ar[d]_{i_2}^{\cong}   \ar[r] 
& 
L^p_n(\IQ G)[1/2] \ar[d]^{j_2}
\\
H_n^G(\eub{G};\bfL^p_{\IR} [1/2]) \ar[d]_{i_3}^{\cong}  \ar[r] 
& 
L^p_n(\IR G)[1/2] \ar[d]^{j_3}
\\
H_n^G(\eub{G};\bfL^p_{C_r^*(?;\IR )} [1/2])  \ar[r] 
&  
L^p_n(C_r^*(G;\IR))[1/2] 
\\
H_n^G(\eub{G};\bfK^{\topo}_{\IR} [1/2])  \ar[r] \ar[u]^{i_4}_{\cong}  
& 
K^{\topo}_n (C_r^*(G;\IR))[1/2] \ar[u]_{j_4}^{\cong}  
\\
H_n^G(\eub{G};\bfK^{\topo}_{\IR})[1/2] \ar@{>->}[d]_{i_5}  \ar[u]_l^{\cong} 
& 
K^{\topo}_n (C_r^*(G;\IR))[1/2] \ar@{>->}[d]^{j_5}  \ar[u]_{\cong}^{\id}
\\
H_n^G(\eub{G};\bfK^{\topo}_{\IC})[1/2] \ar[r] 
&
K_n (C_r^*(G))[1/2] 
}
\label{really_big_diagram}
\end{equation}
where all horizontal maps are assembly maps and the vertical arrows are induced by
transformations of functors $\Groupoids \to \Spectra$. These transformations are induced by change
of rings maps except the one from $\bfK^{\topo}_{\IR}[1/2]$  to $\bfL^p_{C_r^*(?;\IR )} [1/2]$ 
which is much more complicated and carried out in~\cite{Land-Nikolaus(2018)}.
This sophisticated and key ingredient was missing in~\cite[Lemma~22.13~on page~196]{Kreck-Lueck(2005)},
where the existence of such a dagram was claimed. The same remark applies also to~\cite[Theorem~2.7]{Rosenberg(1995)},
see~\cite[Subsection~1.1]{Land-Nikolaus(2018)}.
Actually, it does not exist without inverting two on the spectrum
level. Since it is a weak equivalence, the maps $i_4$ and $j_4$ are bijections.

For any finite group $H$ each of the following maps is known to be a bijection
because of~\cite[Proposition 22.34 on page 252]{Ranicki(1992)} and $\IR H = C_r^*(H;\IR)$
\[
L^p_n(\IZ H)[1/2] \xrightarrow{\cong} L^p_n(\IQ H)[1/2] \xrightarrow{\cong}  L^p_n(\IR H)[1/2]  
\xrightarrow{\cong} L^p_n(C_r^*(H;\IR)).
\]
The natural map $L_n^p(RG)[1/2] \to L_n^{\langle -\infty \rangle}(RG)[1/2]$ is an isomorphism for any $n \in \IZ$,
group $G$ and ring with involution $R$ by the Rothenberg sequence, see~\cite[Theorem~17.2 on page~146]{Ranicki(1992a)}.
Hence we  conclude from the equivariant Atiyah Hirzebruch spectral sequence that the vertical
arrows $i_1$, $i_2$, and $i_3$  are  isomorphisms.  The arrow
$j_1$ is bijective by~\cite[page~376]{Ranicki(1981)}. The maps $l$  are isomorphisms
for general results about localizations.

The lowermost vertical arrows $i_5$ and $j_5$ are known to be split injective because the
inclusion $C_r^*(G;\IR) \to C_r^*(G;\IC)$ induces an isomorphism $C_r^*(G;\IR) \to
C_r^*(G;\IC)^{\IZ/2}$ for the $\IZ/2$-operation coming from complex conjugation $\IC \to
\IC$.  The following conjecture is already raised as a question 
in~\cite[Remark~23.14 on page~197]{Kreck-Lueck(2005)},
see also~\cite[Completion Conjecture in Subsection~5.2]{Land-Nikolaus(2018)}.

\begin{conjecture}[Passage for $L$-theory from  $\IQ G$ to $\IR G$ to $C^*_r(G;\IR)$]
\index{passage for $L$-theory from $\IQ G$ to $\IR G$ to $C^*_r(G;\IR)$}
\label{con:Passage_for_L-theory_from_QG_to_RG_to_Cast_r(G;IR)}
The maps $j_2$ and $j_3$ appearing in diagram~\eqref{really_big_diagram} are bijective.
\end{conjecture}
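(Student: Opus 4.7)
The plan is to establish $j_2$ and $j_3$ as weak equivalences of spectra by lifting them to the level of functors $\Groupoids \to \Spectra$: since $\pi_n(\bfL^p_R(G)) = L^p_n(RG)$ when $G$ is regarded as a one-object groupoid, verifying that the natural transformations $\bfL^p_{\IQ}[1/2] \to \bfL^p_{\IR}[1/2] \to \bfL^p_{C_r^*(?;\IR)}[1/2]$ are weak equivalences on every object of $\Groupoids$ would in particular give the bijectivity of $j_2$ and $j_3$. Moreover, such a spectrum-level statement would simultaneously reprove the already-established bijectivity of $i_2$ and $i_3$ as a consistency check, via Lemma~\ref{lem:extending_from_Or(G)_to_excisive_functor} and Lemma~\ref{lem:transformation_of_G-homology_theories}.

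For $j_2$, I would attempt to show that $\bfL^p_{\IQ}[1/2] \to \bfL^p_{\IR}[1/2]$ is a weak equivalence of $\Spectra$-valued functors on $\Groupoids$. The finite-groupoid case is precisely~\cite[Proposition~22.34 on page~252]{Ranicki(1992)}. The strategy for arbitrary groupoids is algebraic: one would exploit flatness of $\IR$ over $\IQ$ together with a Witt-class density argument, aiming to show that base change along $\IQ G \to \IR G$ induces a bijection of Witt equivalence classes of symmetric forms on finitely generated projective modules after inverting $2$. The principal technical difficulty is that $\IR G$ is not obtained from $\IQ G$ by an extension of scalars interacting nicely with the $G$-equivariant structure on hermitian forms, so one must carefully control how nondegenerate forms lift and how degeneracy behaves under the completion, without a free-module crutch.

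For $j_3$, the deeper map $\bfL^p_{\IR}[1/2] \to \bfL^p_{C_r^*(?;\IR)}[1/2]$, the plan is to factor through topological $K$-theory by exploiting the Land--Nikolaus identification $\bfK^{\topo}_{\IR}[1/2] \xrightarrow{\simeq} \bfL^p_{C_r^*(?;\IR)}[1/2]$ responsible for the bijections $i_4$ and $j_4$ in diagram~\eqref{really_big_diagram}, thereby reducing $j_3$ to comparing $L^p_n(\IR G)[1/2]$ with $K^{\topo}_n(C_r^*(G;\IR))[1/2]$. One would then attempt a Karoubi-type density argument constructing a natural transformation $\bfL^p_{\IR}[1/2] \to \bfK^{\topo}_{\IR}[1/2]$ of functors on $\Groupoids$, coinciding in the finite case with the isomorphism $L^p_n(\IR H)[1/2] \cong K^{\topo}_n(\IR H)[1/2]$. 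This is the expected main obstacle: it demands a genuinely functional-analytic density or continuity property linking algebraic $L$-theory of the discrete group ring $\IR G$ with topological $K$-theory of its reduced $C^*$-completion, a delicate interplay of algebra, homotopy theory and operator algebras. A full solution likely requires a spectrum-level refinement of the Karoubi density theorem, combined with the real Bott-periodicity geometry already embedded in~\cite{Land-Nikolaus(2018)}, and this is precisely the step where the conjecture remains genuinely open.
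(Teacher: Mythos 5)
The statement you are trying to prove is Conjecture~\ref{con:Passage_for_L-theory_from_QG_to_RG_to_Cast_r(G;IR)}: it is an \emph{open conjecture}, not a theorem, and the paper contains no proof of it. It goes back to a question in~\cite[Remark~23.14 on page~197]{Kreck-Lueck(2005)} and appears as the Completion Conjecture in~\cite[Subsection~5.2]{Land-Nikolaus(2018)}. So there is nothing in the paper for your argument to be measured against, and to your credit you concede in your last sentence that the decisive step ``remains genuinely open.'' What you have written is a strategy sketch, not a proof, and it should not be presented as establishing the statement.

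Two concrete points about the sketch itself. First, your opening reduction is circular: $j_2$ and $j_3$ are exactly the evaluations at the one-object groupoid $G$ of the transformations $\bfL^p_{\IQ}[1/2] \to \bfL^p_{\IR}[1/2] \to \bfL^p_{C_r^*(?;\IR)}[1/2]$, so ``verify the transformations are weak equivalences on every object of $\Groupoids$'' is a restatement of the conjecture (for all groups simultaneously), not a step towards it. The genuine content lies in passing from finite groups, where~\cite[Proposition 22.34 on page 252]{Ranicki(1992)} applies and which is all that is needed for the left-hand column $i_1,i_2,i_3$ over $\eub{G}$, to an arbitrary infinite $G$; no mechanism in your sketch accomplishes that passage. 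Second, your stated obstacle for $j_2$ is misplaced: $\IR G$ \emph{is} literally the extension of scalars $\IR \otimes_{\IQ} \IQ G$ as a ring with involution. The real difficulty is that $L$-theory is not known to commute with this infinite field extension when $\IQ G$ is not semisimple, and for $j_3$ that $\IR G$ is not a dense holomorphically closed subalgebra of $C^*_r(G;\IR)$, so Karoubi-density arguments do not apply; this is precisely why the conjecture is open. If you want a true statement in this direction, prove instead Lemma~\ref{lem:FJC_and_BC}~\eqref{lem:FJC_and_BC:FJC_and_BC_imply_Passage}: the conjecture follows from the Farrell--Jones and Baum--Connes Conjectures by a diagram chase in~\eqref{really_big_diagram}, since then all horizontal arrows and all of $i_0,\dots,i_5$, $j_0$, $j_1$, $j_4$ are isomorphisms.
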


One easily checks

\begin{lemma}\label{lem:FJC_and_BC} Let $G$ be a group.

  \begin{enumerate}

  \item\label{lem:FJC_and_BC:FJC_and_BC_imply_Passage} Suppose that $G$ satisfies the
    $L$-theoretic Farrell-Jones
    Conjecture~\ref{con:FJC}
    with coefficients in the ring $R$ for $R = \IQ$ and $R = \IR$ and 
    the Baum-Connes Conjecture~\ref{con:BCC}. Then $G$ satisfies
    Conjecture~\ref{con:Passage_for_L-theory_from_QG_to_RG_to_Cast_r(G;IR)};

  \item\label{lem:FJC_and_BC_FJ:equivalent} Suppose that $G$ satisfies
    Conjecture~\ref{con:Passage_for_L-theory_from_QG_to_RG_to_Cast_r(G;IR)}.
    Then $G$ satisfies the $L$-theoretic Farrell-Jones
    Conjecture~\ref{con:FJC}     for the ring $\IZ$ after inverting $2$, if and only if $G$ satisfies the real version
    of the Baum-Connes Conjecture~\ref{con:BCC} after inverting $2$;

  \item\label{lem:FJC_and_BC_FJ:injectivity} Suppose that the assembly map appearing in
    the  Baum-Connes Conjecture~\ref{con:BCC} is (split) injective after
    inverting $2$. Then the assembly map appearing in $L$-theoretic Farrell-Jones
    Conjecture~\ref{con:FJC} with coefficients in the ring for $R = \IZ$ is (split) injective after inverting $2$.

  \end{enumerate}
\end{lemma}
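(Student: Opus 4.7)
All three parts reduce to diagram chases on the commutative diagram~\eqref{really_big_diagram}, once two translation steps are in place. First I would identify the three topmost rows with the $L$-theoretic Farrell-Jones assembly maps for $R = \IZ, \IQ, \IR$ after inverting $2$: these rows use $\eub{G}$ rather than $\edub{G}$, but Theorem~\ref{the:relative_assembly_maps}~\eqref{the:relative_assembly_maps:L-tor} says the relative assembly map $H_n^G(\eub{G};\bfL_R^{\langle -\infty \rangle})[1/2] \to H_n^G(\edub{G};\bfL_R^{\langle -\infty \rangle})[1/2]$ is bijective. Second, the complex Baum-Connes Conjecture~\ref{con:BCC} implies its real version (see Subsection~\ref{subsec:The_Baum-Connes_Conjecture}), so the row-8 assembly becomes a bijection whenever complex Baum-Connes holds.

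For part~\eqref{lem:FJC_and_BC:FJC_and_BC_imply_Passage}, the hypotheses make the horizontal arrows in rows~4, 5, and~9 bijective, and the vertical bijections $l$, $i_4$, $j_4$ then force bijectivity of rows~6, 7, and~8 as well. In the square between rows~4 and~5 all three arrows except $j_2$ are now bijective, forcing $j_2$ to be bijective; the analogous chase in the square between rows~5 and~6 yields bijectivity of $j_3$.

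For part~\eqref{lem:FJC_and_BC_FJ:equivalent}, bijectivity of $j_2$ and $j_3$ makes every vertical arrow between rows~1 and~8 a bijection, so bijectivity of the top horizontal arrow is equivalent to bijectivity of the row-8 horizontal arrow. By the translations above, this is exactly the equivalence claimed between the $L$-theoretic Farrell-Jones Conjecture for $\IZ$ after inverting $2$ and the real Baum-Connes Conjecture after inverting $2$.

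For part~\eqref{lem:FJC_and_BC_FJ:injectivity}, the strategy is to propagate (split) injectivity upward through the diagram. In the square between rows~9 and~8 one uses that $i_5, j_5$ are split injective: choosing retractions $\rho_A, \rho_B$ of $i_5, j_5$ and a retraction $r_9$ of the row-9 assembly, the composite $\rho_A \circ r_9 \circ j_5$ retracts the row-8 assembly. The bijections $l$, $i_4$, and $j_4$ then transfer (split) injectivity up to rows~7 and~6. The delicate steps are the squares involving $j_2$ and $j_3$, which are not known to be injective; here the asymmetry that $i_2, i_3$ are bijective suffices, since a retraction $r_{\mathrm{low}}$ of the lower assembly map induces the retraction $i^{-1} \circ r_{\mathrm{low}} \circ j$ of the upper one by commutativity. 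The remaining bijective verticals then propagate (split) injectivity up to the top row. \textbf{The main obstacle} is organizational rather than conceptual: one must be careful that the split retractions compose compatibly and that the asymmetry between the $L$-theory and topological $K$-theory sides of the diagram is exploited correctly at each step; the genuinely deep input, the comparison transformation from~\cite{Land-Nikolaus(2018)}, enters only as a black box through the very existence of diagram~\eqref{really_big_diagram}.
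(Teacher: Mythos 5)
Your argument is correct and is precisely the diagram chase on~\eqref{really_big_diagram} that the paper intends (the paper itself offers nothing beyond ``one easily checks''), including the two key points: deducing bijectivity of row~8 from the real version of Baum--Connes, and exploiting the asymmetry that $i_2,i_3$ are bijective while $j_2,j_3$ need not be when propagating (split) injectivity upward in part~\eqref{lem:FJC_and_BC_FJ:injectivity}. One correction: the passage from $\edub{G}$ to $\eub{G}$ after inverting $2$ is Theorem~\ref{the:relative_assembly_maps}~\eqref{the:relative_assembly_maps:L-2_inverted}, not~\eqref{the:relative_assembly_maps:L-tor}, which concerns $\caltr\subseteq\calvcyc$ for torsionfree groups; and note that rows~4 and~5 carry the decoration $p$ rather than $\langle-\infty\rangle$, so identifying them with the Farrell--Jones assembly maps for $\IQ G$ and $\IR G$ also uses the Rothenberg-sequence isomorphism $L^p_n(RG)[1/2]\cong L^{\langle-\infty\rangle}_n(RG)[1/2]$ recorded in the paper's discussion of the diagram.
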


%%%%%%%%%%%%%%%%%%%%%%%%%%%%%%%%%%%%%%%%%%%%%%%%%%%%%%%%%%%%%%%%%%%%%%%%%%%%%%%%%
%%%%%%%%%%%%%%%%%%%%%%%%    Topological cyclic homology             %%%%%%%%%%%%%%%%%%%%%%%%%%%%%
%%%%%%%%%%%%%%%%%%%%%%%%%%%%%%%%%%%%%%%%%%%%%%%%%%%%%%%%%%%%%%%%%%%%%%%%%%%%%%%%%

\typeout{------------------------- Topological cyclic homology -------------------------------}

\section{Topological cyclic homology}
\label{sec:Topological_cyclic_homology}

Let $\bfR$ be a (well-pointed connective) symmetric ring spectrum and $p$ be a prime.
 There are  covariant functors respecting equivalences
\begin{eqnarray*}
\bfTHH_{\bfR}
\colon \Groupoids & \to  &\Spectra;
\\
\bfTC_{\bfR,p}
\colon \Groupoids & \to  & \Spectra,
\end{eqnarray*}
such that for every group $G$ and all $n \in \IZ$ we have
\begin{eqnarray*}
\pi_n(\bfTHH_{\bfR}(G))  & \cong & \pi_n(\bfTHH(\bfR[G]));
\\
\pi_n(\bfTC_{\bfR;p}(G))  & \cong & \pi_n(\bfTC(\bfR[G];p)),
\end{eqnarray*}
where $\bfTHH(\bfR[G])$ is the topological Hochschild homology 
and $\bfTHH(\bfR[G];p)$ is the topological cyclic homology  of the group ring spectrum $\bfR[G]$.

%%%%%%%%%%%%%%%%%%%%%%%%%%%%%%%%%%%%%%%%%%%%%%%%%%%%%%%%%%%%%%%%%%%%%%%%%%%%%%%%%

\subsection{Topological Hochschild homology}
\label{subsec:Topological_Hochschild_homology}

If we now take the functor $\bfTHH_{\bfR}$ and the family $\calcyc$ of cyclic 
subgroups, we obtain from~\cite[Theorem~1.19]{Lueck-Reich-Rognes-Varisco(2017)}
that the Farrell-Jones Conjecture for topological Hochschild homology is true for all groups.%
\index{Farrell-Jones Conjecture!for topological Hochschild homology}

\begin{theorem}[Topological Hochschild homology] 
\label{con:BCC_maximal}
 The assembly maps induced by the projection
  $\pr \colon \EGF{G}{\calcyc} \to G/G$
\[
  H_n^G(\pr;\bfTHH_{\bfR}) \colon H^G_n(\EGF{G}{\calcyc};\bfTHH_{\bfR}) 
\to 
H^G_n(G/G;\bfTHH_{\bfR})  = \pi_n(\bfTHH(\bfR[G]))
\]
is bijective for all $n \in \IZ$.
\end{theorem}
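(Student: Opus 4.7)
The approach is to exploit the classical splitting of topological Hochschild homology of a group ring along conjugacy classes of group elements. For a well-pointed connective symmetric ring spectrum $\bfR$ and any group $G$, there is a natural equivalence
\[
\bfTHH(\bfR[G]) \simeq \bigvee_{[g] \in \conhom(G)} \bfTHH(\bfR) \wedge BC_G(g)_+,
\]
where the wedge runs over conjugacy classes of elements of $G$ and $C_G(g)$ denotes the centraliser of $g$ in $G$. More canonically, the same holds for any groupoid $\calg$ with $\pi_0$ of the free loop space $\Lambda B\calg$ in place of $\conhom(G)$, and this naturality on groupoids is the essential feature the plan would exploit.

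First I would transport this splitting to the covariant functor $\bfTHH_{\bfR}^G \colon \Or(G) \to \Spectra$ via the transport groupoid. Since $\calT^G(G/H)$ is equivalent to $H$, each spectrum $\bfTHH_{\bfR}^G(G/H) \simeq \bfTHH(\bfR[H])$ decomposes over $\conhom(H)$; summing along the morphisms of $\Or(G)$ then packages these into a wedge decomposition of $\bfTHH_{\bfR}^G$ indexed by $\conhom(G)$. Applying Lemma~\ref{lem:extending_from_Or(G)_to_excisive_functor}, both the source and the target of the assembly map split as a direct sum indexed by $[g] \in \conhom(G)$, and the assembly map respects this splitting. It therefore suffices to treat each summand individually.

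The key step is to identify the $[g]$-summand on both sides. The target summand is $\pi_n(\bfTHH(\bfR) \wedge BC_G(g)_+)$. For the source, Theorem~\ref{the:G-homotopy_characterization_of_EGF(G)(calf)} provides the decisive input: the fixed-point set $(\EGF{G}{\calcyc})^{\langle g \rangle}$ is weakly contractible because $\langle g \rangle$ is cyclic, while fixed sets under non-cyclic subgroups are empty. Performing a Borel construction over the normaliser $N_G(\langle g \rangle)$ acting on this contractible fixed-point set, together with the passage from $N_G(\langle g \rangle)/\langle g \rangle$ back to $C_G(g)$, identifies the $[g]$-summand of $(\bfTHH_{\bfR}^G)_{\%}(\EGF{G}{\calcyc})$ with $\bfTHH(\bfR) \wedge BC_G(g)_+$, matching the target.

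The hard part will be arranging the conjugacy-class splitting of $\bfTHH_{\bfR}^G$ so that it is strictly natural in morphisms of $\Or(G)$ and compatible with the cyclic $S^1$-structure on $\bfTHH$, ensuring that the assembly map really does decompose summandwise. Care is also needed in the distinction between finite- and infinite-order elements of $G$, since the geometric interpretation of the cyclic nerve component at $g$ differs in the two cases; however, the cyclic subgroup $\langle g \rangle$ generated by any element of $G$ always lies in $\calcyc$, which is precisely why this family is the correct choice. The cleanest realisation, as in L\"uck--Reich--Rognes--Varisco, works throughout with the cyclic nerve functor on groupoids and reduces the final check to the standard properties of classifying spaces for families recalled in Section~\ref{sec:Classifying_spaces_for_families_of_subgroups}.
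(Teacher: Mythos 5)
The paper does not prove this theorem itself; it simply quotes it from L\"uck--Reich--Rognes--Varisco \cite[Theorem~1.19]{Lueck-Reich-Rognes-Varisco(2017)}, and your sketch --- splitting $\bfTHH(\bfR[\calg])$ over conjugacy classes via the cyclic nerve, decomposing both sides of the assembly map accordingly, and identifying the $[g]$-summands using the contractibility of $(\EGF{G}{\calcyc})^{\langle g \rangle}$ guaranteed by Theorem~\ref{the:G-homotopy_characterization_of_EGF(G)(calf)} --- is precisely the strategy carried out in that reference. Your proposal is therefore correct in outline and coincides with the proof behind the paper's citation; the remaining work you rightly flag (making the conjugacy-class decomposition strictly functorial over $\Or(G)$, taking into account fusion of conjugacy classes under the morphisms of the orbit category) is exactly what that reference supplies, while the compatibility with the $S^1$-action you mention is only needed for the $\bfTC$ statements, not for $\bfTHH$.
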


%%%%%%%%%%%%%%%%%%%%%%%%%%%%%%%%%%%%%%%%%%%%%%%%%%%%%%%%%%%%%%%%%%%%%%%%%%%%%%%%%

\subsection{Topological cyclic homology}
\label{subsec:Topological_cyclic_homology}

If we  take the functor $\bfTC_{\bfR;p}$ and the family $\calfin$ of cyclic 
subgroups, we obtain from~\cite[Theorem~1.5]{Lueck-Reich-Rognes-Varisco(2016assembly)}
that the injectivity part of the Farrell-Jones Conjecture for topological cyclic homology 
is true under certain finiteness assumptions
\index{Farrell-Jones Conjecture!for topological cyclic homology}

\begin{theorem}[Split injectivity for topological cyclic homology] 
\label{the:TC_split_injective}
Assume that one for the following conditions hold for the family $\calf$:
\begin{enumerate}
\item\label{the:TC_split_injective:Fin} 
We  have $\calf = \calfin$ and there is a model for $\eub{G}$ of finite type;

\item\label{the:TC_split_injective:calvycy}
We  have $\calf = \calvcyc$ and $G$ is hyperbolic or virtually abelian.

\end{enumerate}

 Then the assembly maps induced by the projection
  $\pr \colon \eub{G} \to G/G$
\[
  H_n^G(\pr;\bfTC_{\bfR;p}) \colon H^G_n(\eub{G};\bfTC_{\bfR;p}) 
\to 
H^G_n(G/G;\bfTHH_{\bfR;p})  = \pi_n(\bfTHH(\bfR[G];p))
\]
is split injective for all $n \in \IZ$.
\end{theorem}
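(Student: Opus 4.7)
The plan is to reduce split injectivity of the $\bfTC_{\bfR;p}$-assembly map to the isomorphism statement for $\bfTHH_{\bfR}$ given by Theorem~\ref{con:BCC_maximal}, exploiting the structural description of $\bfTC_{\bfR;p}$ as a homotopy limit built from $\bfTHH_{\bfR}$ via its cyclotomic structure, together with the Bökstedt-Hsiang-Madsen style cyclotomic trace machinery.

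First, I would fix a model of $\bfTC_{\bfR;p}$ as a homotopy limit in the cyclotomic tower of genuine $C_{p^n}$-fixed points of $\bfTHH_{\bfR}$, with structure maps coming from Frobenius and restriction. This yields, for each $n$, natural transformations $\bfTC_{\bfR;p} \to \bfTHH_{\bfR}^{C_{p^n}}$ of functors $\Groupoids \to \Spectra$, and hence a commutative ladder whose rungs compare the assembly map in question with the assembly maps for the various fixed-point spectra. At the bottom of the tower sits the assembly map for $\bfTHH_{\bfR}$ with respect to the family $\calcyc$, which is an isomorphism by Theorem~\ref{con:BCC_maximal}, and restriction along the inclusion $\calfcyc \subseteq \calcyc$ transfers this isomorphism information to the finite cyclic part of $\calfin$.

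Second, I would construct the splitting by combining this ladder with an induction/transfer argument of Artin-type to propagate the information from finite cyclic subgroups to arbitrary finite subgroups, which is where the finite-type hypothesis on $\eub{G}$ enters case~\eqref{the:TC_split_injective:Fin}: it ensures that the homotopy limit defining $\bfTC_{\bfR;p}$ is compatible with the $G$-equivariant homology functor up to a controlled $\lim^1$-error, so that the isomorphism on the THH level together with the transfer produces a retraction up to this error, and the $\lim^1$-term is seen to vanish on the relevant bidegrees. In case~\eqref{the:TC_split_injective:calvycy}, the additional geometric hypotheses on $G$ (Rips-complex models in the hyperbolic case, explicit nilpotent models in the virtually abelian case) supply small finite-dimensional models for $\edub{G}$ obtained from $\eub{G}$ by adding cells with virtually cyclic stabilizers, and on each such stabilizer one applies the already-established finite-type result together with a direct analysis of the infinite cyclic quotient.

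The main obstacle is the interchange of a homotopy limit (defining $\bfTC_{\bfR;p}$) with the homotopy colimit describing $G$-equivariant homology, since these do not commute in general. The finiteness hypotheses in~\eqref{the:TC_split_injective:Fin} and~\eqref{the:TC_split_injective:calvycy} are exactly what is required to make the relevant tower pro-isomorphisms concentrated in bounded degree and to force the obstructing $\lim^1$-terms to vanish; carrying this out rigorously—tracking the cyclotomic structure through the extension from the orbit category to $\GCWpairs$ via Lemma~\ref{lem:extending_from_Or(G)_to_excisive_functor}—is the technical heart of the argument and is executed in Lück-Reich-Rognes-Varisco (2016).
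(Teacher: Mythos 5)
The paper offers no argument for this theorem beyond the citation to~\cite[Theorem~1.5]{Lueck-Reich-Rognes-Varisco(2016assembly)}, and your outline is a faithful sketch of what that reference actually does: exhibit $\bfTC_{\bfR;p}$ as a homotopy limit built from the cyclotomic structure on $\bfTHH_{\bfR}$, feed in the $\calcyc$-assembly equivalence for $\bfTHH_{\bfR}$ (Theorem~\ref{con:BCC_maximal}), and control the interchange of this homotopy limit with the equivariant homology construction via the finiteness hypotheses, using~\cite{Lueck-Reich-Varisco(2003)}. Since you, like the paper, defer the technical heart to the same source, this is essentially the same proof; the only caveat is that your appeal to an ``Artin-type induction'' for the passage from finite cyclic to finite subgroups is not quite how the reference proceeds (it uses the Transitivity Principle together with the finite-group case, Theorem~\ref{con:TC_finite}, and the retraction ultimately comes from the conjugacy-class decomposition of $\bfTHH$ and its fixed points rather than from a transfer).
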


Moreover, we also have, see~\cite[Theorem~1.2]{Lueck-Reich-Rognes-Varisco(2016assembly)}

\begin{theorem}[Topological cyclic homology and finite groups] 
\label{con:TC_finite} 
If $G$ is a finite group,  then the 
assembly map for the family $\calcyc$ of cyclic subgroups 
\[
  H_n^G(\pr;\bfTC_{\bfR;p}) \colon H^G_n(\EGF{G}{\calcyc};\bfTC_{\bfR;p}) 
\to 
H^G_n(G/G;\bfTC_{\bfR;p})  = \pi_n(\bfTC(\bfR[G];p))
\]
is bijective for all $n \in \IZ$.
\end{theorem}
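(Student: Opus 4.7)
The plan is to derive the theorem from Theorem~\ref{con:BCC_maximal}, which gives the analogous equivalence for $\bfTHH_{\bfR}$ for every group, by exploiting additional structure available when $G$ is finite. First I would rewrite both sides of the assembly map via the homotopy colimit description from Subsection~\ref{subsec:The_assembly_map_in_terms_of_homotopy_colimits}: the source becomes $\hocolim_{\OrGF{G}{\calcyc}} \bfTC_{\bfR;p}^G|_{\OrGF{G}{\calcyc}}$, and using the equivalence of groupoids $H \to \calT^G(G/H)$ from Subsection~\ref{subsec:Spectra_over_Groupoids}, each value $\bfTC_{\bfR;p}^G(G/H)$ is weakly equivalent to $\bfTC(\bfR[H];p)$ for the cyclic subgroup $H$. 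Thus the statement reduces to showing that the values $\bfTC(\bfR[H];p)$ for cyclic $H \subseteq G$ assemble to $\bfTC(\bfR[G];p)$.

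Next I would invoke a splitting theorem for $\bfTHH(\bfR[G])$ that is available for finite $G$. As an $S^1$-equivariant spectrum, $\bfTHH(\bfR[G])$ decomposes as a wedge indexed by the $G$-conjugacy classes of elements $g \in G$, with the summand indexed by $[g]$ built from the centralizer $C_G(g)$ and carrying an $S^1$-action induced by the cyclic group $\langle g \rangle$. The key technical input I would need is that this decomposition is compatible with the cyclotomic structure, so that passage to $C_{p^n}$-fixed points, to the Tate construction $(-)^{tC_p}$, to the cyclotomic Frobenius, and ultimately to $\bfTC(-;p)$ all respect the wedge. This would yield a decomposition of $\bfTC(\bfR[G];p)$ whose summands are controlled by cyclic subgroups of $G$, precisely matching the source of the assembly map after a standard orbit-category reindexing that identifies a sum over conjugacy classes $[g]$ weighted by $C_G(g)$-data with a homotopy colimit over $\OrGF{G}{\calcyc}$.

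To finish, I would compare the two sides summand by summand. On the summand indexed by the cyclic subgroup $H = \langle g \rangle$, the restricted assembly map is an equivalence by construction, so the full map is a weak equivalence, which is exactly the claimed bijectivity on homotopy groups.

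The hardest part of this plan will be the cyclotomic compatibility of the wedge decomposition of $\bfTHH(\bfR[G])$. One must verify that the cyclotomic Frobenius $\varphi_p \colon \bfTHH(\bfR[G]) \to \bfTHH(\bfR[G])^{tC_p}$ respects the splitting over conjugacy classes, which hinges on tracking how the restricted $C_p$-action permutes the wedge summands via the $p$-th power map $[g] \mapsto [g^p]$ on conjugacy classes. Because $G$ is finite, this $p$-power map acts on a finite set and the resulting Tate computations are tractable; the finiteness of $G$ is precisely what prevents an analogous direct argument for general groups whose cyclic subgroups may be infinite, explaining why the integral bijectivity in this form is expected only for finite $G$.
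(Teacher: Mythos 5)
The paper offers no proof of this theorem at all: it is stated with a bare reference to~\cite[Theorem~1.2]{Lueck-Reich-Rognes-Varisco(2016assembly)}, so there is no in-text argument to compare yours against. Judged against the strategy of that reference, your outline is essentially the right one: the conjugacy-class splitting of $\bfTHH(\bfR[G])$ into summands built from classifying spaces of centralizers, the compatibility of this splitting with the cyclotomic structure via the $p$-th power map on conjugacy classes, and a reindexing that matches the wedge decomposition of the target with the homotopy colimit over $\OrGF{G}{\calcyc}$ defining the source. You have also correctly located where finiteness of $G$ enters, although I would sharpen the formulation: the decisive point is not merely that the Tate computations are ``tractable'' but that for finite $G$ the wedge over conjugacy classes is \emph{finite}, hence agrees with the corresponding finite product and therefore commutes with the homotopy inverse limits (over the $R$- and $F$-maps, respectively the Frobenius and canonical maps) defining $\bfTC(-;p)$. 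This is exactly the commutation of smash products with homotopy inverse limits that fails for infinite groups, as flagged in Remark~\ref{rem:FJC_for_TC_fails}, and it is what breaks the analogous argument in general.

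That said, your text is a plan rather than a proof, and the three steps you defer are precisely the ones carrying all the content: (i) the cyclotomic compatibility of the splitting, including the correct bookkeeping of how edgewise subdivision identifies the $C_p$-fixed components indexed by $[g]$ with components indexed by $p$-th powers or roots; (ii) the verification that under the reindexing the assembly map really is the inclusion of wedge summands (this is a genuine induction/double-coset argument over $\OrGF{G}{\calcyc}$, not a formality); and (iii) the passage from the $\bfTHH$-level splitting through fixed points and limits to $\bfTC$. Also, your opening suggestion that the theorem is ``derived from'' Theorem~\ref{con:BCC_maximal} should be read with care: $\bfTC$ is not a functor of $\bfTHH$ in a way that is formally compatible with assembly maps, so the $\bfTHH$ statement is an ingredient and a template, not a premise from which the $\bfTC$ statement follows by naturality. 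None of this makes your approach wrong --- it is the approach of the cited source --- but as written it would not yet constitute a proof.
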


\begin{remark}[The Farrell Jones Conjecture for topological cyclic homology is not true in general]
\label{rem:FJC_for_TC_fails}
There are examples, where the assembly map
\[
H_n^G(\pr;\bfTC_{\bfR;p}) \colon H^G_n(\edub{G};\bfTC_{\bfR;p}) \to \pi_n(\bfTHH(\bfR[G];p))
\]
not surjective,
see~\cite[Theorem~1.6]{Lueck-Reich-Rognes-Varisco(2016assembly)}.  At
least there is a pro-isomorphism for $\bfTC_{\bfR;p}$ with respect to the
family $\calcyc$,
see~\cite[Theorem~1.4]{Lueck-Reich-Rognes-Varisco(2016assembly)}.  The
complications occurring with topological cyclic homology are due to the
fact that smash products and homotopy inverse limit do not commute in
general, see~\cite{Lueck-Reich-Varisco(2003)}.
\end{remark}

%%%%%%%%%%%%%%%%%%%%%%%%%%%%%%%%%%%%%%%%%%%%%%%%%%%%%%%%%%%%%%%%%%%%%%%%%%%%%%%%%

\subsection{Relating the assembly maps of Farrell-Jones to the one for topological 
cyclic homology via the cyclotomic trace}
\label{subsec:Relating_the_assembly_maps_of_Farrell-Jones_to_the_one_the_one_for_topological_cyclic_homology}
\index{assembly map!and the cyclotomic trace}
There is an important transformation from algebraic $K$-theory to topological cyclic
homology, the so called \emph{cyclotomic trace}.  It relates the assembly maps for the algebraic
$K$-theory of $\IZ G$ to the cyclic topological homology of the spherical group ring of $G$
and is a key ingredient in proving the rational injectivity of $K$-theoretic assembly maps.
The construction of the cyclotomic trace and the proof of the $K$-theoretic Novikov
conjecture is carried out in the celebrated paper by
Boekstedt-Hsiang-Madsen~\cite{Boekstedt-Hsiang-Madsen(1993)}. The passage from $\caltr$ to
$\calfin$, thus detecting a much larger portion of the algebraic $K$-theory of $\IZ G$ and
proving new results about the Whitehead group $\Wh(G)$, is presented
in~\cite{Lueck-Reich-Rognes-Varisco(2017)}.

For more information about topological cyclic homology we refer for instance 
to~\cite{Boekstedt-Hsiang-Madsen(1993),Dundas-Goodwillie-McCarthy(2013),Nikolaus-Scholze(2017)}.

%%%%%%%%%%%%%%%%%%%%%%%%%%%%%%%%%%%%%%%%%%%%%%%%%%%%%%%%%%%%%%%%%%%%%%%%%%%%%%%%%
%%%%%%%%%%%%%%%%%%%%%%%%   The global point of views          %%%%%%%%%%%%%%%%%%%%%%%%%%%%%%%%%%
%%%%%%%%%%%%%%%%%%%%%%%%%%%%%%%%%%%%%%%%%%%%%%%%%%%%%%%%%%%%%%%%%%%%%%%%%%%%%%%%%

\typeout{------------------------- The global point of view -------------------------------}

\section{The global point of view}
\label{sec:The_global_point_of_view}

At various occasions it has turned out that one should take a global point of view, i.e.,
one should not consider each group separately, but take into account that in general there
is a theory which can be applied to every group and the values for the various groups are
linked.  This appears for instance in the following definition taken
from~\cite[Section~1]{Lueck(2002b)}.

Let $\alpha\colon H \to G$ be a group homomorphism.  Given an $H$-space $X$, define the
\emph{induction of $X$ with $\alpha$} to be the $G$-space $\ind_{\alpha} X := G
\times_{\alpha} X$, which is the quotient of $G \times X$ by the right $H$-action $(g,x)
\cdot h := (g\alpha(h),h^{-1} x)$ for $h \in H$ and $(g,x) \in G \times X$.

\begin{definition}[Equivariant homology theory]
\label{def:equivariant_homology_theory}
An \emph{equivariant homology theory with values in $\Lambda$-modules}%
\index{equivariant homology theory}
$
\calh^?_n
$
assigns to each group $G$  a $G$-homology theory $\calh^G_*$ 
with values in $\Lambda$-modules 
(in the sense of Definition~\ref{def:G-homology_theory})
together with the following so called \emph{induction structure}:

Given a group homomorphism $\alpha\colon  H \to G$ and  a $H$-$CW$-pair
$(X,A)$, there are for every $n \in \IZ$ natural homomorphisms
\begin{eqnarray}
&\ind_{\alpha} \colon  \calh_n^H(X,A) \to \calh_n^G(\ind_{\alpha}(X,A)) &
\label{induction_structure}
\end{eqnarray}
satisfying:

\begin{itemize}

\item \emph{Compatibility with the boundary homomorphisms}\\[1mm]
$\partial_n^G \circ \ind_{\alpha} = \ind_{\alpha} \circ \partial_n^H$;

\item \emph{Functoriality}\\[1mm]
Let $\beta\colon G \to K$ be another group homomorphism.
Then we have for $n \in \IZ$
\[
\ind_{\beta \circ \alpha}  = \calh^K_n(f_1)\circ\ind_{\beta} \circ \ind_{\alpha} \colon
\calh^H_n(X,A) \to \calh_n^K(\ind_{\beta\circ\alpha}(X,A)),
\]
where $f_1\colon  \ind_{\beta}\ind_{\alpha}(X,A)
\xrightarrow{\cong} \ind_{\beta\circ \alpha}(X,A),
\quad (k,g,x) \mapsto (k\beta(g),x)$
is the natural $K$-homeo\-mor\-phism;

\item \emph{Compatibility with conjugation}\\[1mm]
For $n \in \IZ$, $g \in G$ and a (proper) $G$-$CW$-pair $(X,A)$
the homomorphism $\ind_{c(g)\colon  G \to G}\colon \calh^G_n(X,A)\to
\calh^G_n(\ind_{c(g)\colon G \to G}(X,A))$ agrees with
$\calh_n^G(f_2)$ for the $G$-homeomorphism
$f_2\colon  (X,A) \to \ind_{c(g)\colon G \to G} (X,A)$ which sends
$x$ to $(1,g^{-1}x)$ in $G\times_{c(g)} (X,A)$;

\item \emph{Bijectivity}\\[1mm]
If $\ker(\alpha)$ acts freely on $X\setminus A$, then 
$\ind_{\alpha}\colon  \calh_n^H(X,A) \to \calh_n^G(\ind_{\alpha}(X,A))$
is bijective for all $n \in \IZ$.

\end{itemize}
\end{definition}

Because of the following theorem it will pay off that in Subsection~\ref{subsec:Spectra_over_Groupoids}
we considered  functors defined on $\Groupoids$ and not only on $\OrG$.

\begin{theorem}[Constructing equivariant homology theories using spectra]
\label{the:GROUPOID-spectra_and_equivariant_homology_theories}
Consider a covariant functor $\bfE\colon \Groupoids\to \Spectra$
respecting equivalences.

Then there is  an equivariant homology theory $H_*^?(-;\bfE)$
satisfying 
\[
H^G_n(G/H; \bfE)  \cong  H^H_n(\pt; \bfE)  \cong  \pi_n(\bfE(H))
\]
for every subgroup $H \subseteq G$ of every group $G$ and every $n \in \IZ$.
\end{theorem}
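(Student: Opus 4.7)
The plan is to build the equivariant homology theory pointwise from the orbit-category construction already set up in Subsection on Spectra over Groupoids, and then supply the extra induction structure using functoriality of the transport groupoid.

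First, for a fixed group $G$ I would apply the recipe of Subsection~\ref{subsec:Spectra_over_Groupoids}: compose $\bfE$ with the transport groupoid functor $\OrG\to\Groupoids,\ G/H\mapsto \calT^G(G/H)$ to obtain a covariant $\Or(G)$-spectrum $\bfE^G$. Lemma~\ref{lem:extending_from_Or(G)_to_excisive_functor} then yields an excisive functor $(\bfE^G)_{\%}\colon\GCWpairs\to\Spectra$ and a $G$-homology theory $H^G_n(-;\bfE)=\pi_n\bigl((\bfE^G)_{\%}(-)\bigr)$ with $H^G_n(G/H;\bfE)=\pi_n(\bfE^G(G/H))$. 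The canonical equivalence of groupoids $H\to\calT^G(G/H)$ together with the assumption that $\bfE$ respects equivalences yields $\pi_n(\bfE^G(G/H))\cong\pi_n(\bfE(H))=H^H_n(\pt;\bfE)$, establishing the identification claimed in the theorem.

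Next I would construct the induction structure. Given a homomorphism $\alpha\colon H\to G$, induction $\ind_\alpha$ sends $H$-$CW$-complexes to $G$-$CW$-complexes and is compatible with cellular structure. On orbit categories it is induced by the functor $\alpha_*\colon\Or(H)\to\Or(G),\ H/L\mapsto G/\alpha(L)$, and there is a natural transformation of groupoids $\calT^H(S)\to\calT^G(\ind_\alpha S)$ sending an object $s$ to $(e,s)$ and a morphism $h$ to $\alpha(h)$. Applying $\bfE$ yields a map of $\Or(H)$-spectra $\bfE^H(-)\to\bfE^G(\ind_\alpha(-))$. Smashing with the orbit spaces and using naturality of the $\wedge_{\Or(?)}$-construction in Davis--L\"uck, one obtains a map of spectra $(\bfE^H)_{\%}(X,A)\to (\bfE^G)_{\%}(\ind_\alpha(X,A))$; passage to $\pi_n$ defines $\ind_\alpha$ as required by~\eqref{induction_structure}.

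Finally I would verify the four axioms: compatibility with the connecting homomorphism and functoriality in $\alpha$ are immediate since the whole construction is functorial in $\alpha$; compatibility with conjugation follows because for $g\in G$ the groupoid map $\calT^G(G/H)\to\calT^G(\ind_{c(g)}G/H)$ induced by conjugation agrees, up to the canonical isomorphism $f_2$, with the identity of $\bfE^G(G/H)$ (an inner automorphism of a groupoid is naturally isomorphic to the identity, and $\bfE$ respects this equivalence). The main obstacle is the bijectivity axiom: when $\ker(\alpha)$ acts freely on $X\setminus A$, one needs to see that $\ind_\alpha\colon H^H_n(X,A;\bfE)\to H^G_n(\ind_\alpha(X,A);\bfE)$ is an isomorphism. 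Using excision (Lemma~\ref{lem:extending_from_Or(G)_to_excisive_functor}) and the cellular filtration, one reduces to pairs of the form $(H/L\times D^p,H/L\times S^{p-1})$, and further by homotopy invariance to the case $X=H/L$ with $\ker(\alpha)\cap L=\{1\}$. Then $\ind_\alpha(H/L)=G/\alpha(L)$, the restriction $\alpha|_L\colon L\to\alpha(L)$ is an isomorphism, and the induced map $\bfE(L)\to\bfE(\alpha(L))$ is a weak equivalence because $\bfE$ respects equivalences. Lemma~\ref{lem:transformation_of_G-homology_theories} then promotes this pointwise equivalence to the equivalence on all $G$-$CW$-pairs, completing the construction. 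Details of the full verification are given in~\cite[Proposition~6.8]{Davis-Lueck(1998)}.
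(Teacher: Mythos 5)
Your construction is correct and is essentially the proof the paper outsources: the paper's ``proof'' is just the citation \cite[Proposition~5.6]{Lueck-Reich(2005)}, and what you describe --- $\bfE^G = \bfE\circ\calT^G$ plus Lemma~\ref{lem:extending_from_Or(G)_to_excisive_functor} for each fixed $G$, the induction structure from the functor $\calT^H(S)\to\calT^G(\ind_{\alpha}S)$, and the cell-by-cell verification of the bijectivity axiom --- is exactly the argument given there. One small imprecision: Lemma~\ref{lem:transformation_of_G-homology_theories} as stated requires a weak equivalence on \emph{all} orbits of a single group, whereas you only have one on orbits $H/L$ with $L\cap\ker(\alpha)=\{1\}$; this is harmless because the relative cellular induction you already set up only ever meets such orbits, but the final step should appeal to that induction (or to the evident relative variant of the lemma) rather than to the lemma verbatim.
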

\begin{proof} See~\cite[Proposition~5.6 on page~793]{Lueck-Reich(2005)}.
\end{proof}

The global point of view has been taken up and pursued by Stefan Schwede on the level of spectra in his
book~\cite{Schwede(2018book)}, where global equivariant homotopy theory for compact
Lie groups is developed. To deal with spectra is much more advanced and sophisticated
than with equivariant homology.

%%%%%%%%%%%%%%%%%%%%%%%%%%%%%%%%%%%%%%%%%%%%%%%%%%%%%%%%%%%%%%%%%%%%%%%%%%%%%%%%%
%%%%%%%%%%%%%%%%%%%%%%%%    Relative assembly maps             %%%%%%%%%%%%%%%%%%%%%%%%%%%%%%%%%
%%%%%%%%%%%%%%%%%%%%%%%%%%%%%%%%%%%%%%%%%%%%%%%%%%%%%%%%%%%%%%%%%%%%%%%%%%%%%%%%%

\typeout{------------------------- Relative assembly maps -------------------------------}

%%%%%%%%%%%%%%%%%%%%%%%%%%%%%%%%%%%%%%%%%%%%%%%%%%%%%%%%%%%%%%%%%%%%%%%%%%%%%%%%%

\section{Relative assembly maps}
\label{sec:Relative_assembly_maps}
In the formulations of the Isomorphism Conjectures above such as the one due to
Farrell-Jones and Baum-Connes it is important to make the family $\calf$ as small as
possible. The largest family we encounter is $\calvcyc$, but there are special cases,
where one can get smaller families. In particular it is desirable to get away with
$\calfin$, since there are often finite models for $\eub{G} =\EGF{G}{\calfin}$, whereas
conjecturally there is a finite model  for $\edub{G} = \EGF{G}{\calvcyc}$ only if $G$ itself
is virtually cyclic, see~\cite[Conjecture~1]{Juan-Pineda-Leary(2006)}.

The general problem is to study and hopefully to prove bijectivity of relative assembly
map associated to two families $\calf \subseteq \calf'$, i.e., of the map induced by the up
to $G$-homotopy unique $G$-map $\EGF{G}{\calf} \to \EGF{G}{\calf'}$
\[
\asmb_{\calf \subseteq \calf'} \colon H_n^G(\EGF{G}{\calf}) \to H_n^G(\EGF{G}{\calg})
\]
\index{assembly map!relative}
for a $G$-homology theory $H_*^G$ with values in $\Lambda$-modules.  
In studying this the global point of view becomes useful.

The main technical result is the so called Transitivity Principle, which we explain next.
For a family $\calf$ of subgroups of $G$ and a subgroup $H \subset G$ we define
a family of subgroups of $H$
\[
\calf|_H = \{ K \cap H \; | \; K \in \calf \}.
\]

\begin{theorem}[Transitivity Principle]\label{the:transitivity}%
\index{Transitivity Principle}
Let $\calh_{\ast}^? ( - )$ be an equivariant homology theory with values in $\Lambda$-modules.
Suppose $\calf \subset \calf'$ are two families of subgroups of $G$.
If for every $H \in \calf^{\prime}$ and every $n \in \IZ$ the assembly map 
\[
\asmb_{\calf|_H \subseteq  \calall} \colon \calh_n^H ( \EGF{H}{\calf|_H} ) \to \calh_n^H ( \pt )
\]
is an isomorphism, then for every $n \in \IZ$ the relative assembly map
\[
\asmb_{\calf \subseteq  \calf^{\prime}} \colon \calh_n^G ( \EGF{G}{\calf} ) \to \calh_n^G ( \EGF{G}{\calf^{\prime}} )
\]
is an isomorphism.
\end{theorem}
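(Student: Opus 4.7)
The plan is to replace $\EGF{G}{\calf}$ by the $G$-space $Y := \EGF{G}{\calf} \times \EGF{G}{\calf'}$ with diagonal action, show that $Y$ is still a model for $\EGF{G}{\calf}$, and then prove that the second projection $\pi\colon Y \to \EGF{G}{\calf'}$ induces an isomorphism on $\calh^G_\ast$ by a cellular induction over the target. Because $\EGF{G}{\calf} \simeq_G Y$ canonically, this will identify $\calh^G_n(\pi)$ with $\asmb_{\calf \subseteq \calf'}$. For the replacement: for $L \subseteq G$ one has $Y^L = (\EGF{G}{\calf})^L \times (\EGF{G}{\calf'})^L$, which is weakly contractible for $L \in \calf$ (since then also $L \in \calf'$) and empty otherwise, so Theorem~\ref{the:G-homotopy_characterization_of_EGF(G)(calf)} applies. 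I would also record the auxiliary observation that for any $H \subseteq G$ the restriction $\res^G_H \EGF{G}{\calf}$ is a model for $\EGF{H}{\calf|_H}$, since closure of $\calf$ under subgroups gives $\calf|_H = \{L \subseteq H \mid L \in \calf\}$, and the fixed-point criterion then matches.

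The key local input is the following. For each $H \in \calf'$, the preimage $\pi^{-1}(G/H)$ is $G$-homeomorphic to $G \times_H \res^G_H \EGF{G}{\calf}$ via $[g,x] \mapsto (gH, gx)$. Invoking the Bijectivity axiom of the induction structure from Definition~\ref{def:equivariant_homology_theory} for the inclusion $\alpha\colon H \hookrightarrow G$ (whose kernel is trivial, so the freeness hypothesis is vacuous), together with the Compatibility with conjugation and Functoriality axioms to verify naturality of the comparison square, one identifies $\calh^G_n(\pi|_{\pi^{-1}(G/H)})$ with $\calh^H_n(\EGF{H}{\calf|_H}) \to \calh^H_n(\pt)$, which is an isomorphism by hypothesis.

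With the local input in hand, I would run the induction over the skeletal filtration $X_n$ of $X := \EGF{G}{\calf'}$, writing $Y_n := \pi^{-1}(X_n)$. The base case $n=0$ reduces via the disjoint union axiom to the local input on each orbit appearing in $X_0$. The step from $X_n$ to $X_{n+1}$ uses the $G$-pushout attaching equivariant $(n+1)$-cells; pulling back under $\pi$ yields a compatible $G$-pushout for $Y_{n+1}$, and applying the long exact sequence of each pair, excision, and the Five Lemma propagates the isomorphism, the ``cell'' input being the local statement above applied to each attaching map. Finally one passes to the colimit $X = \colim_n X_n$ by a standard mapping telescope argument, using the disjoint union axiom and excision. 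The main obstacle will be naturality bookkeeping in the inductive step: one must certify that the identifications supplied by the induction structure glue across attaching maps into a genuine natural transformation of long exact sequences, so that the Five Lemma is legitimately applicable. Once this naturality is in place, the Transitivity Principle follows.
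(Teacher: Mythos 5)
Your argument is correct and is essentially the proof given in the reference that the paper cites for this theorem (L\"uck--Reich, Theorem~65): the same replacement of $\EGF{G}{\calf}$ by the product $\EGF{G}{\calf}\times\EGF{G}{\calf'}$ justified by the fixed-point criterion, the same use of the induction structure to reduce each orbit $G/H$ with $H\in\calf'$ to the hypothesis for $H$, and the same skeletal induction with the disjoint union axiom, five lemma, and colimit argument. The paper itself gives no proof beyond that citation, so there is nothing further to compare.
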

\begin{proof} See~\cite[Theorem~65 on page~742]{Lueck-Reich(2005)}.
\end{proof}

One has the following results about diminishing the family of subgroups.  Denote by
$\calvcyc_I$ the family of subgroups of $G$ which are either finite or admit an
epimorphism onto $\IZ$ with finite kernel.  Obviously
$\calfin \subseteq \calvcyc_I \subseteq \calvcyc$.

\begin{theorem}[Relative assembly maps]\label{the:relative_assembly_maps}
\
\begin{enumerate}

\item\label{the:relative_assembly_maps:K-tor_reg}
The relative assembly map for $K$-theory
\[
\asmb_{\caltr \subseteq  \calvcyc} \colon H_n^G ( \EGF{G}{\caltr} ; \bfK_R ) 
\to H_n^G ( \EGF{G}{\calvcyc} ; \bfK_R ) 
\]
is bijective for all $n \in \IZ$, provided that $G$ is torsionfree and $R$ is regular;

\item\label{the:relative_assembly_maps:K-reg}
The relative assembly map for $K$-theory
\[
\asmb_{\calfin \subseteq \calvcyc} \colon H_n^G ( \EGF{G}{\calfin} ; \bfK_R )
\to H_n^G ( \EGF{G}{\calvcyc} ; \bfK_R)
\]
is bijective for all $n \in \IZ$, provided that $R$ is a regular ring containing $\IQ$;

\item\label{the:relative_assembly_maps:K-vcyc_I} 
The relative assembly map for $K$-theory
\[
\asmb_{\calvcyc_I \subseteq \calvcyc} \colon H_n^G(\EGF{G}{\calvcyc_I};\bfK_R) 
\xrightarrow{\cong} H_n(\EGF{G}{\calvcyc};\bfK_R)
\]
is bijective for all $n \in \IZ$;

\item\label{the:relative_assembly_maps:K-fin_to_vycy_rationally} 
The relative assembly map for $K$-theory
\[
\quad \quad \quad \quad \quad \asmb_{\calfin \subseteq \calvcyc} \otimes_{\IZ} \id_{\IQ}
\colon H_n^G ( \EGF{G}{\calfin} ; \bfK_R) \otimes_{\IZ} \IQ \to
H_n^G ( \EGF{G}{\calvcyc} ; \bfK_R) \otimes_{\IZ} \IQ
\]
is bijective for all $n \in \IZ$, provided that $R$ is regular;

\item\label{the:relative_assembly_maps:L-tor} The relative assembly map for $L$-theory
\[
\quad \asmb_{\caltr \subseteq \calvcyc} \colon H_n^G ( \EGF{G}{\caltr} ; \bfL_R^{\langle - \infty \rangle} ) 
\to H_n^G ( \EGF{G}{\calvcyc} ; \bfL_R^{\langle - \infty \rangle}) 
\]
is an isomorphism for all $n \in \IZ$, provided that $G$ is torsionfree;

\item\label{the:relative_assembly_maps:L-vycy_to_vcyc_I} There relative assembly map for $L$-theory
  \[
  \quad \quad \quad \asmb_{\calfin\subseteq \calvcyc_I} \colon 
  H_n^G\bigl(\EGF{G}{\calfin};\bfL_R^{\langle -    \infty\rangle}\bigr) \to
  H_n^G\bigl(\EGF{G}{\calvcyc_I};\bfL_R^{\langle - \infty\rangle}\bigr)
  \] 
  is bijective for all $n \in \IZ$;

\item\label{the:relative_assembly_maps:L-2_inverted}
The relative assembly map  for $L$-theory
\[
\quad \quad \quad  \quad \quad \asmb_{\calfin \subseteq \calvcyc}[1/2] \colon H_n^G ( \EGF{G}{\calfin} ; \bfL_R^{\langle -\infty \rangle} )[1/2]
\to
H_n^G ( \EGF{G}{\calvcyc} ; \bfL_R^{\langle -\infty \rangle} ) [1/2]
\]
is bijective  for all $n \in \IZ$;

\item\label{the:relative_assembly_maps:BC}
The relative assembly map for topological $K$-theory
\[
\quad \asmb_{\calfcyc \subseteq \calfin} \colon H_n^G ( \EGF{G}{\calfcyc} ; \bfK^{\topo} ) \to H_n^G ( \EGF{G}{\calfin} ; \bfK^{\topo} )
\]
is bijective  for all $n \in \IZ$.  This is also true for the real version;

\item\label{the:relative_assembly_maps:fin_vycy_split_injective}
The relative assembly maps for $K$-theory and $L$-theory
\begin{eqnarray*}
\asmb_{\calfin \subseteq  \calvcyc} \colon H_n^G ( \EGF{G}{\calfin} ; \bfK_R )
& \to & 
H_n^G ( \EGF{G}{\calvcyc} ; \bfK_R );
\\
\asmb_{\calfin \subseteq \calvcyc} \colon H_n^G ( \EGF{G}{\calfin} ; \bfL_R^{\langle -\infty \rangle} )
& \to &
H_n^G ( \EGF{G}{\calvcyc} ; \bfL_R^{\langle -\infty \rangle} ),
\end{eqnarray*}
are split injective for all $n \in \IZ$;

\end{enumerate}
\end{theorem}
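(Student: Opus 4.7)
The overarching strategy for parts~\eqref{the:relative_assembly_maps:K-tor_reg}--\eqref{the:relative_assembly_maps:BC} is to invoke the Transitivity Principle (Theorem~\ref{the:transitivity}). For any of the relevant equivariant homology theories $\calh^?_* \in \{H_*^?(-;\bfK_R),\,H_*^?(-;\bfL_R^{\langle -\infty\rangle}),\,H_*^?(-;\bfK^{\topo})\}$, supplied by Theorem~\ref{the:GROUPOID-spectra_and_equivariant_homology_theories}, the bijectivity of $\asmb_{\calf\subseteq\calf'}$ reduces to verifying, for every $H\in\calf'$ and every $n\in\IZ$, that the absolute assembly map
\[
\asmb_{\calf|_H\subseteq\calall}\colon \calh_n^H(\EGF{H}{\calf|_H}) \to \calh_n^H(\pt)
\]
is an isomorphism. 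When $H\in\calf$ already, the family $\calf|_H$ contains $H$ itself, so $\EGF{H}{\calf|_H}\simeq_H \pt$ and the conclusion is trivial. The content in each statement is therefore to analyze the groups $H\in\calf'\setminus\calf$.

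For the torsionfree cases~\eqref{the:relative_assembly_maps:K-tor_reg} and~\eqref{the:relative_assembly_maps:L-tor}, virtually cyclic subgroups of $G$ are either trivial or infinite cyclic, and the required statement is the absolute assembly isomorphism for $\IZ$. For $K$-theory with $R$ regular this is the Bass--Heller--Swan splitting $K_n(R[\IZ])\cong K_n(R)\oplus K_{n-1}(R)$ combined with the vanishing of $NK_n(R)$; for $L$-theory it is the Shaneson splitting, where the decoration $\langle -\infty\rangle$ ensures that the correction terms disappear. For statement~\eqref{the:relative_assembly_maps:K-reg}, when $R$ is regular and contains $\IQ$, I would classify virtually cyclic groups $H$ as being of type $I$ (an extension $1\to F\to H\to \IZ\to 1$) or type $II$ (an extension $1\to F\to H\to D_\infty\to 1$), check that $\calfin|_H$ equals the family of finite subgroups of $H$ in either case, and then invoke the vanishing of the relevant Nil-terms for regular $\IQ$-algebras (Waldhausen) to conclude the single-subgroup assembly map is an isomorphism. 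Statement~\eqref{the:relative_assembly_maps:K-vcyc_I} is handled by the same classification: the only $H\in\calvcyc\setminus\calvcyc_I$ are of type $II$, and one must check that $\calvcyc_I|_H\subseteq\calall$ is an isomorphism on this specific class of groups, which is a direct calculation. For~\eqref{the:relative_assembly_maps:K-fin_to_vycy_rationally} the rational version uses Weibel's rational vanishing of Nil for regular rings. Statements~\eqref{the:relative_assembly_maps:L-vycy_to_vcyc_I} and~\eqref{the:relative_assembly_maps:L-2_inverted} follow from Cappell's analysis showing that the $\UNil$-contributions vanish after inverting $2$ or are already handled by the $\langle -\infty\rangle$-decoration for type $I$ virtually cyclic groups. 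For the Baum--Connes case~\eqref{the:relative_assembly_maps:BC}, the relevant check for $H$ finite is the statement that complex topological $K$-theory of a finite group is assembled from its finite cyclic subgroups; this is a consequence of Artin-type induction for complex representation rings combined with the identification $K_0^{\topo}(C^*_r(H))\cong R_\IC(H)$.

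For the split injectivity statement~\eqref{the:relative_assembly_maps:fin_vycy_split_injective} the Transitivity Principle alone is insufficient, because genuine surjectivity fails (Nil and $\UNil$ contributions are real). Here the plan is to construct a splitting on the source side. One realizes the relative assembly map as a natural transformation of equivariant homology theories applied to the cofiber of $\EGF{G}{\calfin}_+\to\EGF{G}{\calvcyc}_+$, decomposes this cofiber via the Davis--Quinn--Reich splitting indexed by the set of conjugacy classes of maximal infinite virtually cyclic subgroups of $G$, and uses the induction functors from these subgroups to produce a retraction at the level of spectra.

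The main technical obstacle is the type $II$ virtually cyclic case appearing in~\eqref{the:relative_assembly_maps:K-reg}--\eqref{the:relative_assembly_maps:L-2_inverted}, where one cannot simply invoke Bass--Heller--Swan or Shaneson but must invoke a detailed analysis of twisted Nil and $\UNil$ groups; the hypotheses ($R$ regular containing $\IQ$; inversion of $2$; restriction to $\calvcyc_I$) are precisely those under which these obstructions are known to vanish. For~\eqref{the:relative_assembly_maps:fin_vycy_split_injective}, the hard step is the construction of the spectrum-level retraction, which relies on the combinatorics of maximal virtually cyclic subgroups and a careful compatibility of induction maps with the assembly functor.
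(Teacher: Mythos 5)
Your overall strategy --- reduce every statement except the last to single virtually cyclic groups via the Transitivity Principle (Theorem~\ref{the:transitivity}) applied to the equivariant homology theories of Theorem~\ref{the:GROUPOID-spectra_and_equivariant_homology_theories}, and then feed in the (twisted) Bass--Heller--Swan and Shaneson/Ranicki splittings together with vanishing or torsion properties of the relevant $\NK$- and $\UNil$-terms --- is exactly the strategy of the references the paper cites; the paper itself offers no argument beyond that citation list. Your treatment of parts~\eqref{the:relative_assembly_maps:K-tor_reg}, \eqref{the:relative_assembly_maps:K-reg}, \eqref{the:relative_assembly_maps:L-tor}, \eqref{the:relative_assembly_maps:L-vycy_to_vcyc_I} and~\eqref{the:relative_assembly_maps:L-2_inverted} is sound as a sketch. (One small correction to~\eqref{the:relative_assembly_maps:K-fin_to_vycy_rationally}: for regular $R$ one has $\NK_n(R)=0$ integrally; what must be shown to be rationally trivial are the twisted Farrell and Waldhausen Nil-terms of $RF$ for finite $F$, and $RF$ need not be regular, so the input is the torsion results for Nil-groups of finite groups rather than a statement about $\NK_n(R)$ itself.)

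Three places are substantially harder than you make them sound. For~\eqref{the:relative_assembly_maps:K-vcyc_I}, the passage from $\calvcyc_I$ to $\calvcyc$ for a type~II group is not ``a direct calculation'': after transitivity one must identify the Waldhausen Nil-term of the amalgamated-product decomposition over $D_\infty$ with the Farrell Nil-term of the index-two type~I subgroup, and this comparison is the main theorem of~\cite{Davis-Quinn-Reich(2011)}, proved there by a controlled-topology argument. For~\eqref{the:relative_assembly_maps:BC}, Artin induction for $R_{\IC}$ is only a statement after inverting the group order, whereas the claim is an integral isomorphism in all degrees; the integral statement for $K^H_*(\EGF{H}{\calcyc}) \to K^H_*(\pt)$ for finite $H$ is precisely the content of~\cite{Matthey-Mislin(2003)} and needs induction-theoretic input beyond Artin's theorem. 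For~\eqref{the:relative_assembly_maps:fin_vycy_split_injective}, your proposed indexing of the cofibre of $\eub{G}_+ \to \edub{G}_+$ by conjugacy classes of \emph{maximal} infinite virtually cyclic subgroups does not exist for a general $G$ (maximal elements need not exist); the correct decomposition, due to L\"uck--Weiermann, is indexed by commensurability classes of infinite virtually cyclic subgroups, and the retraction is then supplied by the natural splittings of the Bass--Heller--Swan type decompositions constructed in~\cite[Theorem~0.1 and Theorem~0.3]{Lueck-Steimle(2016splitasmb)}; the original proof of split injectivity in~\cite{Bartels(2003b)} instead proceeds by controlled algebra. With these repairs your outline matches the cited proofs.
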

\begin{proof} 
See~\cite[Theorem~1.3]{Bartels(2003b)},~\cite[Theorem~0.5]{Bartels-Lueck(2007ind)},~\cite{Davis-Quinn-Reich(2011)},~%
\cite[Lemma~4.2]{Lueck(2005heis)},~\cite[Section~2.5]{Lueck-Reich(2005)},%
~\cite[Theorem~0.1 and Theorem~0.3]{Lueck-Steimle(2016splitasmb)},
and~\cite{Matthey-Mislin(2003)}.
\end{proof}

\begin{remark}[Torsionfree groups]
\label{rem:torsionfree_groups}
A typical application is that for a torsionfree group $G$ and a regular ring the
$K$-theoretic Farrell-Jones Conjecture~\ref{con:FJC} implies together with
Theorem~\ref{the:relative_assembly_maps}~\eqref{the:relative_assembly_maps:K-tor_reg} that
the assembly map
\[
H_n(BG;\bfK(R)) = \pi_n(BG_+ \wedge \bfK(R) \to K_n(RG)
\]
is bijective for all $n \in \IZ$. Analogously, for a torsionfree group $G$ the
$L$-theoretic Farrell-Jones Conjecture~\ref{con:FJC} implies together with
Theorem~\ref{the:relative_assembly_maps}~\eqref{the:relative_assembly_maps:L-tor} that
the assembly map
\[
H_n(BG;\bfL^{\-\langle -\infty \rangle}(R)) = \pi_n(BG_+ \wedge \bfL^{\-\langle -\infty \rangle}) \to L^{\-\langle -\infty \rangle}_n(RG)
\]
is bijective for all $n \in \IZ$.
\end{remark}

%%%%%%%%%%%%%%%%%%%%%%%%%%%%%%%%%%%%%%%%%%%%%%%%%%%%%%%%%%%%%%%%%%%%%%%%%%%%%%%%%
%%%%%%%%%%%%%%%%%%%%%%%%   Computationally tools          %%%%%%%%%%%%%%%%%%%%%%%%%%%%%%%%%%
%%%%%%%%%%%%%%%%%%%%%%%%%%%%%%%%%%%%%%%%%%%%%%%%%%%%%%%%%%%%%%%%%%%%%%%%%%%%%%%%%

\typeout{------------------------- Computationally tools -------------------------------}

\section{Computationally tools}
\label{sec:Computationally_tools}

Most computations of $K$- and $L$-groups of group rings are done using the Farrell-Jones
Conjecture~\ref{con:FJC} and the Baum-Connes Conjecture~\ref{con:BCC}. The situation in
the Farrell-Jones Conjecture~\ref{con:FJC} is more complicated than in the Baum-Connes
setting, since the family $\calvcyc$ is much harder to handle than the family $\calfin$.
One can consider $H_n^G(\eub{G};\bfK_R)$ and $H_n^G(\edub{G},\eub{G};\bfK_R)$ separately
because of
Theorem~\ref{the:relative_assembly_maps}~\eqref{the:relative_assembly_maps:fin_vycy_split_injective},
where one considers $\eub{G}$ as a $G$-$CW$-subcomplex of $\edub{G}$.  The term
$H_n^G(\edub{G},\eub{G};\bfK_R)$ involves Nil-terms and UNil-terms, which are hard to
determine. For $H_n^G(\eub{G};\bfK_R)$, $H_n^G(\eub{G};\bfL_R^{\langle -\infty \rangle})$
and $H_n^G(\eub{G};\bfK^{\topo})$ one can use the equivariant Atyiah-Hirzebruch spectral
sequence or the $p$-chain spectral sequence, see Davis-Lueck~\cite{Davis-Lueck(2003)}.
Rationally these groups can often be computed explicitly using equivariant Chern
characters, see~\cite[Section~1]{Lueck(2002b)}. Notice that these can only be constructed
since we take on the global point of view as explained in
Section~\ref{sec:The_global_point_of_view}. Often an important input is 
that one obtains from the geometry of the underlying group nice models for $\eub{G}$ and can
construct $\edub{G}$ from $\eub{G}$ by attaching a tractable family of equivariant cells.

Here are two example, where these ideas lead to a explicite computation, whose outcome is as simple
as one can hope.

\begin{theorem}[Farrell-Jones Conjecture for torsionfree hyperbolic groups for $K$-theory]%
\label{the:FJC_torsionfree_hyperbolic}
  Let $G$ be a torsionfree hyperbolic group.

  \begin{enumerate}

  \item\label{the:FJC_torsionfree_hyperbolic:higher_K-theory}
  We obtain for all $n \in \IZ$ an isomorphism
  \[
  H_n(BG;\bfK (R)) \oplus \bigoplus_C \bigl(\NK_n(R) \oplus
  \NK_n(R)\bigr) \xrightarrow{\cong} K_n(RG),
  \]
  where $C$ runs through a complete system of representatives of the
  conjugacy classes of maximal infinite cyclic subgroups. 
  If $R$ is regular, we have $\NK_n(R) = 0$ for all $n \in \IZ$;

  \item\label{the:FJC_torsionfree_hyperbolic:middle_and_lower_K-theory}
  The  abelian groups $K_n(\IZ G)$ for $n \le -1$, $\widetilde{K}_0(\IZ G)$, and $\Wh(G)$ vanish;

  \item\label{the:FJC_torsionfree_hyperbolic:L-theory}
  We get for every ring $R$ with involution and $n \in \IZ$ an isomorphism
  \[
  H_n(BG;\bfL^{\langle - \infty \rangle}(R))  \xrightarrow{\cong}  L_n^{\langle - \infty \rangle}(RG).
  \]
  For every $j \in \IZ, j \le 2$, and $n \in \IZ$, the natural map
  \[
   L_n^{\langle j  \rangle}(\IZ G) \xrightarrow{\cong}  L_n^{\langle - \infty \rangle}(\IZ G)
  \]
  is bijective;
  
  \item\label{the:FJC_torsionfree_hyperbolic:topological_K-theory}
  We get for every $n \in \IZ$ an isomorphism
 \[
 K_n^{\topo}(BG)  \xrightarrow{\cong}  K_n^{\topo}(C^*_r(G)).
 \]
\end{enumerate}
\end{theorem}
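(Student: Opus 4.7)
The plan is to deduce all four statements from the Farrell-Jones Conjecture and the Baum-Connes Conjecture. A torsionfree hyperbolic group $G$ belongs to $\calfj$ by Theorem~\ref{the:status_of_the_Full_Farrell-Jones_Conjecture}~\eqref{the:status_of_the_Full_Farrell-Jones_Conjecture:Classes_of_groups:hyperbolic_groups} and to $\calbc$ by Theorem~\ref{the:status_of_the_Baum-Connes_Conjecture_with_coefficients}~\eqref{the:status_of_the_Baum-Connes_Conjecture_with_coefficients:groups_in_calbc:hyperbolic}, so both the $K$- and $L$-theoretic Farrell-Jones Conjecture~\ref{con:FJC} and the Baum-Connes Conjecture~\ref{con:BCC} hold for $G$. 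Two geometric facts will be used throughout: first, every finite subgroup of $G$ is trivial, so $\eub{G}$ is a model for $EG$ and $H_n^G(\eub{G};\bfE^G) = H_n(BG;\bfE)$; second, every non-trivial virtually cyclic subgroup of $G$ is infinite cyclic, and every maximal infinite cyclic $C \subseteq G$ is self-normalizing, $N_G(C) = C$.

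For part~\eqref{the:FJC_torsionfree_hyperbolic:higher_K-theory} I would use the Farrell-Jones Conjecture to rewrite $K_n(RG) \cong H_n^G(\edub{G};\bfK_R)$, and then split the long exact sequence of the $G$-$CW$-pair $(\edub{G},\eub{G})$ via Theorem~\ref{the:relative_assembly_maps}~\eqref{the:relative_assembly_maps:fin_vycy_split_injective} into
\[
K_n(RG) \;\cong\; H_n(BG;\bfK(R)) \;\oplus\; H_n^G(\edub{G},\eub{G};\bfK_R).
\]
Next I would pick a model of $\edub{G}$ built from $\eub{G}$ by attaching, for each conjugacy class $(C)$ of maximal infinite cyclic subgroups, a cell of type $G/C \times D^k$. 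The self-normalizing property $N_G(C) = C$ guarantees that the singular part decomposes as a $G$-disjoint union indexed by such classes, so by equivariant excision and the induction structure of $H_*^?(-;\bfK)$ one obtains
\[
H_n^G(\edub{G},\eub{G};\bfK_R) \;\cong\; \bigoplus_{(C)} H_n^C(\edub{C},\eub{C};\bfK_R).
\]
For $C \cong \IZ$ each summand is the cokernel of the classical assembly $H_n(S^1;\bfK(R)) = K_n(R) \oplus K_{n-1}(R) \to K_n(R[t,t^{-1}])$, which by Bass-Heller-Swan equals $\NK_n(R) \oplus \NK_n(R)$. The vanishing of $\NK_n(R)$ for regular $R$ is the classical Bass-Heller-Swan-Quillen result.

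Part~\eqref{the:FJC_torsionfree_hyperbolic:middle_and_lower_K-theory} follows by specializing to the regular ring $R = \IZ$, so that the $\NK$-terms vanish and $K_n(\IZ G) \cong H_n(BG;\bfK(\IZ))$; the equivariant Atiyah-Hirzebruch spectral sequence and the classical vanishing of $K_n(\IZ)$ for $n \le -1$, of $\widetilde{K}_0(\IZ)$ and of $\Wh(1)$ then kill the Whitehead-type quotients in the relevant degrees. For part~\eqref{the:FJC_torsionfree_hyperbolic:L-theory}, the $L$-theoretic Farrell-Jones Conjecture combined with Theorem~\ref{the:relative_assembly_maps}~\eqref{the:relative_assembly_maps:L-tor} yields $L_n^{\langle -\infty \rangle}(RG) \cong H_n(BG;\bfL^{\langle -\infty \rangle}(R))$ with no relative contribution, and the independence of the decoration reduces via the Rothenberg sequences to the vanishing of Tate cohomology groups such as $\widehat{H}^*(\IZ/2;\Wh(G))$ and $\widehat{H}^*(\IZ/2;\widetilde{K}_0(\IZ G))$, which is immediate from part~\eqref{the:FJC_torsionfree_hyperbolic:middle_and_lower_K-theory}. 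Part~\eqref{the:FJC_torsionfree_hyperbolic:topological_K-theory} is immediate from the Baum-Connes Conjecture once we use $\eub{G} = EG$.

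The main obstacle is the geometric bookkeeping behind the splitting in part~\eqref{the:FJC_torsionfree_hyperbolic:higher_K-theory}: one has to produce an explicit $G$-pushout presenting $\edub{G}$ over $\eub{G}$ whose singular cells are induced up from maximal infinite cyclic subgroups, and then verify that under excision the resulting description of the relative term matches the Bass-Heller-Swan decomposition in such a way that precisely the two Nil-summands survive, not the full $K_n(R[t,t^{-1}])$. This hinges on the self-normalizing property of maximal infinite cyclic subgroups of torsionfree hyperbolic groups and on the naturality of Bass-Heller-Swan under induction along $\{1\} \hookrightarrow C$.
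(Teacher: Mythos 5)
Your argument is correct and is essentially the proof of the cited reference (the paper itself only points to \cite[Theorem~1.2]{Lueck-Rosenthal(2014)}): combine the Farrell--Jones and Baum--Connes Conjectures for hyperbolic groups with the splitting of Theorem~\ref{the:relative_assembly_maps}~\eqref{the:relative_assembly_maps:fin_vycy_split_injective}, the identification $\eub{G}=EG$ for torsionfree $G$, the construction of $\edub{G}$ from $\eub{G}$ by attaching cells induced from the self-normalizing maximal infinite cyclic subgroups, and Bass--Heller--Swan to identify the relative term with the Nil-summands. You also correctly single out the one genuinely technical point, namely the $G$-pushout presenting $\edub{G}$ over $\eub{G}$ and the compatibility of the resulting excision isomorphism with the Bass--Heller--Swan splitting.
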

\begin{proof} 
See~\cite[Theorem~1.2]{Lueck-Rosenthal(2014)}.
\end{proof}

\begin{theorem}\label{the:compexified_version}
Suppose that $G$ satisfies the Baum-Connes Conjecture~\ref{con:BCC}
and $K$-theoretic Farrell-Jones 
Conjecture~\ref{con:FJC}  
with coefficients in the ring $\IC$. 
Let $\con(G)_f$ be the set of conjugacy
classes $(g)$ of elements $g \in G$ of finite order. We denote for $g \in G$ by
$C_G\langle g \rangle$ the centralizer in $G$ of the cyclic subgroup generated by $g$.

Then we get the following commutative square, whose horizontal maps are isomorphisms and
complexifications of assembly maps, and whose vertical maps are induced by the obvious
change of theory homomorphisms
\[
\xymatrix{%
\bigoplus_{p+q=n} \bigoplus_{(g) \in \con(G)_f}
H_p(C_G\langle g \rangle;\IC) \otimes_{\IZ}  K_q (\IC)
\ar[r]^-{\cong} \ar[d]
& 
K_n(\IC G) \otimes_{\IZ} \IC
\ar[d]
\\
\bigoplus_{p+q= n} \bigoplus_{(g) \in \con(G)_f}
H_p(C_G\langle g \rangle;\IC) \otimes_{\IZ}  K_q^{\topo}(\IC)
\ar[r]^-{\cong}
&
K_n^{\topo}(C_r^*(G)) \otimes_{\IZ} \IC
}
\]
\end{theorem}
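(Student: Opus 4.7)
The plan is to combine the two Isomorphism Conjectures at our disposal with the reduction from $\calvcyc$ to $\calfin$ and then apply the equivariant Chern character. The two horizontal isomorphisms are obtained by identifying the targets with the equivariant homology of $\eub{G}$, after which the direct sum decomposition is produced by the Chern character. The vertical square then commutes by naturality in the coefficient functor.

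More concretely, I would proceed as follows. First, the $K$-theoretic Farrell-Jones Conjecture~\ref{con:FJC} applied to $R = \IC$ identifies $K_n(\IC G)$ with $H_n^G(\edub{G};\bfK_{\IC})$. Since $\IC$ is regular, Theorem~\ref{the:relative_assembly_maps}~\eqref{the:relative_assembly_maps:K-fin_to_vycy_rationally} tells us that the relative assembly map $H_n^G(\eub{G};\bfK_{\IC}) \to H_n^G(\edub{G};\bfK_{\IC})$ becomes an isomorphism after $\otimes_{\IZ}\IQ$, and therefore also after $\otimes_{\IZ}\IC$. Hence the composite assembly map yields a natural isomorphism $H_n^G(\eub{G};\bfK_{\IC}) \otimes_{\IZ}\IC \xrightarrow{\cong} K_n(\IC G) \otimes_{\IZ}\IC$. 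On the topological side, the Baum-Connes Conjecture~\ref{con:BCC} directly delivers an isomorphism $H_n^G(\eub{G};\bfK^{\topo}) \to K_n^{\topo}(C^*_r(G))$, whose complexification is still an isomorphism.

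Next I would invoke the equivariant Chern character of~\cite[Section~1]{Lueck(2002b)}, available precisely because both functors $\bfK_{\IC}$ and $\bfK^{\topo}$ arise via the global construction in Subsection~\ref{subsec:Spectra_over_Groupoids} from covariant functors $\Groupoids \to \Spectra$ respecting equivalences, so that by Theorem~\ref{the:GROUPOID-spectra_and_equivariant_homology_theories} they extend to equivariant homology theories with an induction structure. For any proper $G$-$CW$-complex $X$ (in particular $X = \eub{G}$) and any such $\bfE$ with $\pi_q(\bfE(1)) = \calk_q$, the Chern character provides a natural isomorphism
\[
\ch^G_n(X) \colon \bigoplus_{p+q=n}\bigoplus_{(g)\in\con(G)_f} H_p\bigl(C_G\langle g\rangle;\IC\bigr) \otimes_{\IZ} \calk_q \xrightarrow{\cong} H^G_n(X;\bfE) \otimes_{\IZ} \IC.
\]
Applied once with $\bfE = \bfK_{\IC}$ (where $\calk_q = K_q(\IC)$) and once with $\bfE = \bfK^{\topo}$ (where $\calk_q = K_q^{\topo}(\IC)$), together with the previous step, this produces the two horizontal isomorphisms of the diagram.

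For the commutativity of the square, I would use naturality: there is a natural transformation of covariant functors $\Groupoids \to \Spectra$ from $\bfK_{\IC}$ to $\bfK^{\topo}$ (induced by the comparison of algebraic and topological $K$-theory of $C^*$-algebras, factoring $K(\IC G) \to K^{\topo}(C^*_r(G))$), which induces the right vertical arrow on the targets and, on the sources, the coefficient-change map $K_q(\IC) \to K_q^{\topo}(\IC)$ smashed with $H_p(C_G\langle g\rangle;\IC)$. The Chern character and the assembly map are both natural in the coefficient functor, so the square commutes. The main obstacle I anticipate is the bookkeeping for this naturality: one must verify that the particular natural transformation $\bfK_{\IC} \to \bfK^{\topo}$ used to define the right-hand vertical map is compatible with the one implicit in the statement, and that the Chern character is functorial in transformations of $\Or(G)$-spectra and not merely in the underlying homology theories, so that the left-hand vertical map really is induced by the same transformation. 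Once this compatibility is pinned down, the conclusion is formal.
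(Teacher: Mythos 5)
Your proposal is correct and follows essentially the same route as the paper, which for this theorem simply cites \cite[Theorem~0.5]{Lueck(2002b)}: combine the two Isomorphism Conjectures with the passage from $\calvcyc$ to $\calfin$ (for $R=\IC$, which is regular and contains $\IQ$, item~\eqref{the:relative_assembly_maps:K-reg} of Theorem~\ref{the:relative_assembly_maps} even gives integral bijectivity, so you need not pass through the rational statement) and then apply the equivariant Chern character to $H^G_*(\eub{G};-)$ for both coefficient functors. The one imprecision worth flagging is your stated form of the Chern character: for a general coefficient functor $\bfE$ its target is indexed by conjugacy classes of finite cyclic subgroups with coefficients in certain cokernels of induction maps built from the Mackey structure of $\pi_q(\bfE(?))$, not by $\con(G)_f$ with coefficients $\pi_q(\bfE(1))$; the form you quote requires the additional computation that $K_q(\IC H)\otimes_{\IZ}\IC$ and $K_q^{\topo}(C^*_r(H))\otimes_{\IZ}\IC$ decompose, compatibly with induction and conjugation, as $\bigoplus_{(h)\in\con(H)}K_q(\IC)\otimes_{\IZ}\IC$ for finite $H$ --- this is precisely where the specific theories $\bfK_{\IC}$ and $\bfK^{\topo}$ enter and where the compatibility needed for the commutativity of the square is actually verified.
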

\begin{proof}
See~\cite[Theorem~0.5]{Lueck(2002b)}).
\end{proof}

%%%%%%%%%%%%%%%%%%%%%%%%%%%%%%%%%%%%%%%%%%%%%%%%%%%%%%%%%%%%%%%%%%%%%%%%%%%%%%%%%
%%%%%%%%%%   The challenge of extending equivariant homotopy theory to infinite groups          %%%%%%%%%%%%%
%%%%%%%%%%%%%%%%%%%%%%%%%%%%%%%%%%%%%%%%%%%%%%%%%%%%%%%%%%%%%%%%%%%%%%%%%%%%%%%%%

\typeout{------------ The challenge of extending equivariant homotopy theory to infinite groups ------------}

\section{The challenge of extending equivariant homotopy theory to infinite groups}
\label{sec:The_challenge_of_extending_equivariant_homotopy_to_infinite_groups}

We have seen that it is important to study equivariant homology and homotopy also for
groups which are not necessarily finite.  In particular equivariant KK-theory has
developed into a whole industry, which, however, does not really take the point of view of
spectra instead of $K$-groups and cycles into account. So we encounter

\begin{problem}\label{pro:Equivariant_homotopy} Extend equivariant
  homotopy theory for finite groups to infinite groups, at least in the case of proper
  $G$-actions.
\end{problem}

A few first steps are already in the literature.  We have already explained the notion of
an equivariant homology and the existence of equivariant Chern characters,
see~\cite{Lueck(2002b)}, where the global point of view enters.  There is also a
cohomological version, see~\cite{Lueck(2005c)}.  Topological $K$-theory has systematically
been studied in~\cite{Lueck-Oliver(2001b), Lueck-Oliver(2001a)}, and an attempt of
defining Burnside rings and equivariant cohomotopy for proper $G$-spaces is presented
in~\cite{Lueck(2005r)}.  These do include multiplicative structures. 

An important ingredient in equivariant homotopy theory for finite groups is to stabilize
with unit spheres in finite-dimensional orthogonal representations. However, there are
infinite groups such that any finite-dimensional representation is trivial and therefore
one has to stabilize with equivariant vector bundles,
see~\cite[Remark~6.17]{Lueck(2005r)}. Or one may have to pass even to Hilbert bundles and
equivariant Fredholm operator between these, see~\cite{Phillips(1989)} and 
also~\cite{Lueck-Oliver(2001a)}.

There are various interesting pairings on the group level in the literature, such as
Kasparov products, the action of Swan groups on algebraic K-theory and so on. They all
should be implemented on the spectrum level.  So a systematical study of higher structures for
equivariant spectra over infinite groups has to be carried out and one has to find the
right equivariant homotopy category. This applies also to multiplicative structures and
smash products.  First steps will be presented
in~\cite{Degrijse-Hausmann-Lueck-Patchkoria-Schwede(2019)} using orthogonal spectra.  This
seems to work well for topological $K$-theory, but is probably not adequate for algebraic
$K$-theory. This remark also holds for global equivariant homotopy theory.

A general description of Mackey structures and induction theorems in the sense of Dress is
described in~\cite{Bartels-Lueck(2007ind)}. There are more sophisticated Mackey structure
and transfers in the equivariant homotopy of finite groups, but it is not at all clear
whether and how they extend to infinite groups.

Topological $K$-theory and the Baum-Connes Conjecture make sense and are studied also for
topological groups, e.g., reductive $p$-adic groups and Lie groups. It is conceivable that
also the Farrell-Jones Conjecture has an analogue for Hecke algebras of totally
disconnected groups, see~\cite[Conjecture~119 on page~773]{Lueck-Reich(2005)}. So one can
ask Problem~\ref{pro:Equivariant_homotopy} also for (not necessarily compact) topological
groups instead of infinite (discrete) groups.

%%%%%%%%%%%%%%%%%%%%%%%%%%%%%%%%%%%%%%%%%%%%%%%%%%%%%%%%%%%%%%%%%%%%%%%%%%%%%%%%% 
%%%%%%%%%%%%%%%%%%%%%%%%%%%%%%%%%%%%%%%%%%%%%%%%%%%%%%%%%%%%%%%%%%%%%%%%%%%%%%%%% 
%%%%%%%%%%%%%%%%%%%%%%%%%%%%%%%%%%%%%%%%%%%%%%%%%%%%%%%%%%%%%%%%%%%%%%%%%%%%%%%%% 

\typeout{-------------------------------------- References  ---------------------r------------------}

% \bibliographystyle{abbrv}
% \bibliography{dbpub,dbpre}

\begin{thebibliography}{100}

\bibitem{Adams(1974)}
J.~F. Adams.
\newblock {\em Stable homotopy and generalised homology}.
\newblock University of Chicago Press, Chicago, Ill., 1974.
\newblock Chicago Lectures in Mathematics.

\bibitem{Bartels(2016)}
A.~Bartels.
\newblock On proofs of the {F}arrell-{J}ones conjecture.
\newblock In {\em Topology and geometric group theory}, volume 184 of {\em
  Springer Proc. Math. Stat.}, pages 1--31. Springer, [Cham], 2016.

\bibitem{Bartels-Bestvina(2016)}
A.~Bartels and M.~Bestvina.
\newblock The {F}arrell-{J}ones {C}onjecture for mapping class groups.
\newblock Preprint, arXiv:1606.02844 [math.GT], 2016.

\bibitem{Bartels-Echterhoff-Lueck(2008colim)}
A.~Bartels, S.~Echterhoff, and W.~L\"uck.
\newblock Inheritance of isomorphism conjectures under colimits.
\newblock In Cortinaz, Cuntz, Karoubi, Nest, and Weibel, editors, {\em
  {K}-Theory and noncommutative geometry}, EMS-Series of Congress Reports,
  pages 41--70. European Mathematical Society, 2008.

\bibitem{Bartels-Farrell-Lueck(2014)}
A.~Bartels, F.~T. Farrell, and W.~L{\"u}ck.
\newblock The {F}arrell-{J}ones {C}onjecture for cocompact lattices in
  virtually connected {L}ie groups.
\newblock {\em J. Amer. Math. Soc.}, 27(2):339--388, 2014.

\bibitem{Bartels-Lueck(2007ind)}
A.~Bartels and W.~L{\"u}ck.
\newblock Induction theorems and isomorphism conjectures for {$K$}- and
  {$L$}-theory.
\newblock {\em Forum Math.}, 19:379--406, 2007.

\bibitem{Bartels-Lueck(2012annals)}
A.~Bartels and W.~L{\"u}ck.
\newblock The {B}orel conjecture for hyperbolic and {CAT(0)}-groups.
\newblock {\em Ann. of Math. (2)}, 175:631--689, 2012.

\bibitem{Bartels-Lueck-Reich(2008hyper)}
A.~Bartels, W.~L{\"u}ck, and H.~Reich.
\newblock The {$K$}-theoretic {F}arrell-{J}ones conjecture for hyperbolic
  groups.
\newblock {\em Invent. Math.}, 172(1):29--70, 2008.

\bibitem{Bartels-Lueck-Reich(2008appl)}
A.~Bartels, W.~L\"uck, and H.~Reich.
\newblock On the {F}arrell-{J}ones {C}onjecture and its applications.
\newblock {\em Journal of Topology}, 1:57--86, 2008.

\bibitem{Bartels-Lueck-Reich-Rueping(2014)}
A.~{Bartels}, W.~{L\"uck}, H.~{Reich}, and H.~{R\"uping}.
\newblock {K- and L-theory of group rings over $\mathrm{GL}_n(\mathbf Z)$.}
\newblock {\em {Publ. Math., Inst. Hautes \'Etud. Sci.}}, 119:97--125, 2014.

\bibitem{Bartels-Lueck-Weinberger(2010)}
A.~Bartels, W.~L\"uck, and S.~Weinberger.
\newblock On hyperbolic groups with spheres as boundary.
\newblock {\em Journal of Differential Geometry}, 86(1):1--16, 2010.

\bibitem{Bartels(2003b)}
A.~C. Bartels.
\newblock On the domain of the assembly map in algebraic {$K$}-theory.
\newblock {\em Algebr. Geom. Topol.}, 3:1037--1050 (electronic), 2003.

\bibitem{Bass(1976)}
H.~Bass.
\newblock Euler characteristics and characters of discrete groups.
\newblock {\em Invent. Math.}, 35:155--196, 1976.

\bibitem{Baum-Connes-Higson(1994)}
P.~Baum, A.~Connes, and N.~Higson.
\newblock Classifying space for proper actions and ${K}$-theory of group
  ${C}\sp \ast$-algebras.
\newblock In {\em $C\sp \ast$-algebras: 1943--1993 (San Antonio, TX, 1993)},
  pages 240--291. Amer. Math. Soc., Providence, RI, 1994.

\bibitem{Baum-Guentner-Willet(2016expanders)}
P.~Baum, E.~Guentner, and R.~Willett.
\newblock Expanders, exact crossed products, and the {B}aum-{C}onnes
  conjecture.
\newblock {\em Ann. K-Theory}, 1(2):155--208, 2016.

\bibitem{Baum-Karoubi(2004)}
P.~Baum and M.~Karoubi.
\newblock On the {B}aum-{C}onnes conjecture in the real case.
\newblock {\em Q. J. Math.}, 55(3):231--235, 2004.

\bibitem{Boekstedt-Hsiang-Madsen(1993)}
M.~B{\"o}kstedt, W.~C. Hsiang, and I.~Madsen.
\newblock The cyclotomic trace and algebraic ${K}$-theory of spaces.
\newblock {\em Invent. Math.}, 111(3):465--539, 1993.

\bibitem{Cappell-Ranicki-Rosenberg(2000)}
S.~Cappell, A.~Ranicki, and J.~Rosenberg, editors.
\newblock {\em Surveys on surgery theory. {V}ol. 1}.
\newblock Princeton University Press, Princeton, NJ, 2000.
\newblock Papers dedicated to C. T. C. Wall.

\bibitem{Cappell-Ranicki-Rosenberg(2001)}
S.~Cappell, A.~Ranicki, and J.~Rosenberg, editors.
\newblock {\em Surveys on surgery theory. {V}ol. 2}.
\newblock Princeton University Press, Princeton, NJ, 2001.
\newblock Papers dedicated to C. T. C. Wall.

\bibitem{Carlsson-Pedersen-Roe(1996)}
G.~Carlsson and E.~Pedersen.
\newblock Controlled algebra and the baum-connes conjecture.
\newblock in preparation, 1996.

\bibitem{Chabert-Echterhoff(2001b)}
J.~Chabert and S.~Echterhoff.
\newblock Permanence properties of the {B}aum-{C}onnes conjecture.
\newblock {\em Doc. Math.}, 6:127--183 (electronic), 2001.

\bibitem{Cochran(2004)}
T.~D. Cochran.
\newblock Noncommutative knot theory.
\newblock {\em Algebr. Geom. Topol.}, 4:347--398, 2004.

\bibitem{Cochran-Orr-Teichner(2003)}
T.~D. Cochran, K.~E. Orr, and P.~Teichner.
\newblock Knot concordance, {W}hitney towers and {$L\sp 2$}-signatures.
\newblock {\em Ann. of Math. (2)}, 157(2):433--519, 2003.

\bibitem{Crowley-Lueck-Macko(2019)}
D.~Crowley, W.~L\"uck, and T.~Macko.
\newblock {S}urgery {T}heory: {F}oundations.
\newblock book, in preparation, 2019.

\bibitem{Davis-Lueck(1998)}
J.~F. Davis and W.~L{\"u}ck.
\newblock Spaces over a category and assembly maps in isomorphism conjectures
  in ${K}$- and ${L}$-theory.
\newblock {\em $K$-Theory}, 15(3):201--252, 1998.

\bibitem{Davis-Lueck(2003)}
J.~F. Davis and W.~L{\"u}ck.
\newblock The {$p$}-chain spectral sequence.
\newblock {\em $K$-Theory}, 30(1):71--104, 2003.
\newblock Special issue in honor of Hyman Bass on his seventieth birthday. Part
  I.

\bibitem{Davis-Lueck(2013)}
J.~F. Davis and W.~L{\"u}ck.
\newblock The topological ${K}$-theory of certain crystallographic groups.
\newblock {\em Journal of Non-Commutative Geometry}, 7:373--431, 2013.

\bibitem{Davis-Quinn-Reich(2011)}
J.~F. Davis, F.~Quinn, and H.~Reich.
\newblock {Algebraic $K$-theory over the infinite dihedral group: a controlled
  topology approach.}
\newblock {\em J. Topol.}, 4(3):505--528, 2011.

\bibitem{Degrijse-Hausmann-Lueck-Patchkoria-Schwede(2019)}
D.~Degrijse, M.~Hausmann, W.~L\"uck, I.~Patchkoria, and S.~Schwede.
\newblock Proper equivariant stable homotopy theory.
\newblock in preparation, 2019.

\bibitem{Dundas-Goodwillie-McCarthy(2013)}
B.~I. Dundas, T.~G. Goodwillie, and R.~McCarthy.
\newblock {\em The local structure of algebraic {K}-theory}, volume~18 of {\em
  Algebra and Applications}.
\newblock Springer-Verlag London Ltd., London, 2013.

\bibitem{Dwyer-Weiss-Williams(2003)}
W.~Dwyer, M.~Weiss, and B.~Williams.
\newblock A parametrized index theorem for the algebraic {$K$}-theory {E}uler
  class.
\newblock {\em Acta Math.}, 190(1):1--104, 2003.

\bibitem{Echterhoff-Lueck-Phillips-Walters(2010)}
S.~Echterhoff, W.~L{\"u}ck, N.~C. Phillips, and S.~Walters.
\newblock The structure of crossed products of irrational rotation algebras by
  finite subgroups of {${\rm SL}_2(\Bbb Z)$}.
\newblock {\em J. Reine Angew. Math.}, 639:173--221, 2010.

\bibitem{Enkelmann-Lueck-Malte-Ullmann-Winges(2018)}
N.-E. Enkelmann, W.~L\"{u}ck, M.~Pieper, M.~Ullmann, and C.~Winges.
\newblock On the {F}arrell--{J}ones conjecture for {W}aldhausen's {A}--theory.
\newblock {\em Geom. Topol.}, 22(6):3321--3394, 2018.

\bibitem{Farrell(2002)}
F.~T. Farrell.
\newblock The {B}orel conjecture.
\newblock In F.~T. Farrell, L.~G\"ottsche, and W.~L{\"u}ck, editors, {\em High
  dimensional manifold theory}, number~9 in ICTP Lecture Notes, pages 225--298.
  Abdus Salam International Centre for Theoretical Physics, Trieste, 2002.
\newblock Proceedings of the summer school ``High dimensional manifold theory''
  in Trieste May/June 2001, Number~1.
  http://www.ictp.trieste.it/\~{}pub\_off/lectures/vol9.html.

\bibitem{Farrell-Hsiang(1978)}
F.~T. Farrell and W.~C. Hsiang.
\newblock On the rational homotopy groups of the diffeomorphism groups of
  discs, spheres and aspherical manifolds.
\newblock In {\em Algebraic and geometric topology (Proc. Sympos. Pure Math.,
  Stanford Univ., Stanford, Calif., 1976), Part 1}, Proc. Sympos. Pure Math.,
  XXXII, pages 325--337. Amer. Math. Soc., Providence, R.I., 1978.

\bibitem{Farrell-Jones(1990b)}
F.~T. Farrell and L.~E. Jones.
\newblock Rigidity in geometry and topology.
\newblock In {\em Proceedings of the International Congress of Mathematicians,
  Vol.\ I, II (Kyoto, 1990)}, pages 653--663, Tokyo, 1991. Math. Soc. Japan.

\bibitem{Farrell-Jones(1993a)}
F.~T. Farrell and L.~E. Jones.
\newblock Isomorphism conjectures in algebraic ${K}$-theory.
\newblock {\em J. Amer. Math. Soc.}, 6(2):249--297, 1993.

\bibitem{Farrell-Jones-Lueck(2002)}
F.~T. Farrell, L.~E. Jones, and W.~L{\"u}ck.
\newblock A caveat on the isomorphism conjecture in ${L}$-theory.
\newblock {\em Forum Math.}, 14(3):413--418, 2002.

\bibitem{Farrell-Lueck-Steimle(2018)}
T.~Farrell, W.~L\"uck, and W.~Steimle.
\newblock Approximately fibering a manifold over an aspherical one.
\newblock {\em Math. Ann.}, 370(1-2):669--726, 2018.

\bibitem{Ferry-Lueck-Weinberger(2018)}
S.~Ferry, W.~L\"uck, and S.~Weinberger.
\newblock On the stable {C}annon {C}onjecture.
\newblock Preprint, arXiv:1804.00738 [math.GT], 2018.

\bibitem{Ferry-Ranicki-Rosenberg(1995a)}
S.~C. Ferry, A.~A. Ranicki, and J.~Rosenberg, editors.
\newblock {\em Novikov conjectures, index theorems and rigidity. {V}ol. 1}.
\newblock Cambridge University Press, Cambridge, 1995.
\newblock Including papers from the conference held at the Mathematisches
  Forschungsinstitut Oberwolfach, Oberwolfach, September 6--10, 1993.

\bibitem{Ferry-Ranicki-Rosenberg(1995b)}
S.~C. Ferry, A.~A. Ranicki, and J.~Rosenberg, editors.
\newblock {\em Novikov conjectures, index theorems and rigidity. {V}ol. 2}.
\newblock Cambridge University Press, Cambridge, 1995.
\newblock Including papers from the conference held at the Mathematisches
  Forschungsinstitut Oberwolfach, Oberwolfach, September 6--10, 1993.

\bibitem{Hambleton-Pedersen(2004)}
I.~Hambleton and E.~K. Pedersen.
\newblock Identifying assembly maps in {$K$}- and {$L$}-theory.
\newblock {\em Math. Ann.}, 328(1-2):27--57, 2004.

\bibitem{Hebestreit-Land-Lueck-Randel-Williams(2017)}
F.~Hebestreit, W.~L{\"u}ck, M.~Land, and O.~Randal-Williams.
\newblock A vanishing theorem for tautological classes of aspherical manifolds.
\newblock Preprint, arXiv:1705.06232 [math.AT], 2017.

\bibitem{Higson(1998a)}
N.~Higson.
\newblock The {B}aum-{C}onnes conjecture.
\newblock In {\em Proceedings of the International Congress of Mathematicians,
  Vol. II (Berlin, 1998)}, pages 637--646 (electronic), 1998.

\bibitem{Higson-Kasparov(2001)}
N.~Higson and G.~Kasparov.
\newblock ${E}$-theory and ${K}{K}$-theory for groups which act properly and
  isometrically on {H}ilbert space.
\newblock {\em Invent. Math.}, 144(1):23--74, 2001.

\bibitem{Higson-Lafforgue-Skandalis(2002)}
N.~Higson, V.~Lafforgue, and G.~Skandalis.
\newblock Counterexamples to the {B}aum-{C}onnes conjecture.
\newblock {\em Geom. Funct. Anal.}, 12(2):330--354, 2002.

\bibitem{Higson-Roe(2005surgeryI)}
N.~Higson and J.~Roe.
\newblock Mapping surgery to analysis. {I}. {A}nalytic signatures.
\newblock {\em $K$-Theory}, 33(4):277--299, 2005.

\bibitem{Higson-Roe(2005surgeryII)}
N.~Higson and J.~Roe.
\newblock Mapping surgery to analysis. {II}. {G}eometric signatures.
\newblock {\em $K$-Theory}, 33(4):301--324, 2005.

\bibitem{Higson-Roe(2005surgeryIII)}
N.~Higson and J.~Roe.
\newblock Mapping surgery to analysis. {III}. {E}xact sequences.
\newblock {\em $K$-Theory}, 33(4):325--346, 2005.

\bibitem{Joachim(2003a)}
M.~Joachim.
\newblock {$K$}-homology of {$C\sp \ast$}-categories and symmetric spectra
  representing {$K$}-homology.
\newblock {\em Math. Ann.}, 327(4):641--670, 2003.

\bibitem{Juan-Pineda-Leary(2006)}
D.~Juan-Pineda and I.~J. Leary.
\newblock On classifying spaces for the family of virtually cyclic subgroups.
\newblock In {\em Recent developments in algebraic topology}, volume 407 of
  {\em Contemp. Math.}, pages 135--145. Amer. Math. Soc., Providence, RI, 2006.

\bibitem{Kammeyer-Lueck-Rueping(2016)}
H.~Kammeyer, W.~L{\"u}ck, and H.~R{\"u}ping.
\newblock The {F}arrell--{J}ones conjecture for arbitrary lattices in virtually
  connected {L}ie groups.
\newblock {\em Geom. Topol.}, 20(3):1275--1287, 2016.

\bibitem{Kasparov(1984)}
G.~G. Kasparov.
\newblock Operator {$K$}-theory and its applications: elliptic operators, group
  representations, higher signatures, {$C^\ast$}-extensions.
\newblock In {\em Proceedings of the {I}nternational {C}ongress of
  {M}athematicians, {V}ol.\ 1, 2 ({W}arsaw, 1983)}, pages 987--1000, Warsaw,
  1984. PWN.

\bibitem{Kasparov(1988)}
G.~G. Kasparov.
\newblock Equivariant ${K}{K}$-theory and the {N}ovikov conjecture.
\newblock {\em Invent. Math.}, 91(1):147--201, 1988.

\bibitem{Kasparov(1995)}
G.~G. Kasparov.
\newblock ${K}$-theory, group ${C}\sp *$-algebras, and higher signatures
  (conspectus).
\newblock In {\em Novikov conjectures, index theorems and rigidity, Vol.\ 1
  (Oberwolfach, 1993)}, pages 101--146. Cambridge Univ. Press, Cambridge, 1995.

\bibitem{Kasprowski-Ullmann-Wegner-Winges(2018)}
D.~Kasprowski, M.~Ullmann, C.~Wegner, and C.~Winges.
\newblock The {$A$}-theoretic {F}arrell-{J}ones conjecture for virtually
  solvable groups.
\newblock {\em Bull. Lond. Math. Soc.}, 50(2):219--228, 2018.

\bibitem{Kreck-Lueck(2005)}
M.~Kreck and W.~L{\"u}ck.
\newblock {\em The {N}ovikov conjecture: Geometry and algebra}, volume~33 of
  {\em Oberwolfach Seminars}.
\newblock Birkh\"auser Verlag, Basel, 2005.

\bibitem{Kuehl-Macko-Mole(2013)}
P.~K{\"u}hl, T.~Macko, and A.~Mole.
\newblock The total surgery obstruction revisited.
\newblock {\em M\"unster J. Math.}, 6:181--269, 2013.

\bibitem{Lafforgue(2012)}
V.~Lafforgue.
\newblock {The Baum-Connes conjecture with coefficients for hyperbolic groups.
  (La conjecture de baum-connes \`a coefficients pour les groupes
  hyperboliques.)}.
\newblock {\em J. Noncommut. Geom.}, 6(1):1--197, 2012.

\bibitem{Land(2015)}
M.~Land.
\newblock The analytical assembly map and index theory.
\newblock {\em J. Noncommut. Geom.}, 9(2):603--619, 2015.

\bibitem{Land-Nikolaus(2018)}
M.~Land and T.~Nikolaus.
\newblock On the relation between {$K$}- and {$L$}-theory of {$C^*$}-algebras.
\newblock {\em Math. Ann.}, 371(1-2):517--563, 2018.

\bibitem{Langer-Lueck(2012_K-theory)}
M.~Langer and W.~L{\"u}ck.
\newblock Topological {$K$}-theory of the group {$C^*$}-algebra of a
  semi-direct product {$\Bbb Z^n\rtimes\Bbb Z/m$} for a free conjugation
  action.
\newblock {\em J. Topol. Anal.}, 4(2):121--172, 2012.

\bibitem{Li-Lueck(2012)}
X.~Li and W.~L\"uck.
\newblock {$K$}-theory for ring {$C^*$}-algebras -- the case of number fields
  with higher roots of unity.
\newblock {\em Journal of Topology and Analysis 4 (4)}, pages 449--479, 2012.

\bibitem{Lueck(1989)}
W.~L{\"u}ck.
\newblock {\em Transformation groups and algebraic ${K}$-theory}, volume 1408
  of {\em Lecture Notes in Mathematics}.
\newblock Springer-Verlag, Berlin, 1989.

\bibitem{Lueck(2002c)}
W.~L{\"u}ck.
\newblock A basic introduction to surgery theory.
\newblock In F.~T. Farrell, L.~G\"ottsche, and W.~L{\"u}ck, editors, {\em High
  dimensional manifold theory}, number~9 in ICTP Lecture Notes, pages 1--224.
  Abdus Salam International Centre for Theoretical Physics, Trieste, 2002.
\newblock Proceedings of the summer school ``High dimensional manifold theory''
  in Trieste May/June 2001, Number~1.
  http://www.ictp.trieste.it/\~{}pub\_off/lectures/vol9.html.

\bibitem{Lueck(2002b)}
W.~L{\"u}ck.
\newblock Chern characters for proper equivariant homology theories and
  applications to ${K}$- and ${L}$-theory.
\newblock {\em J. Reine Angew. Math.}, 543:193--234, 2002.

\bibitem{Lueck(2002)}
W.~L{\"u}ck.
\newblock {\em {$L\sp 2$}-{I}nvariants: {T}heory and {A}pplications to
  {G}eometry and \mbox{{$K$}-{T}heory}}, volume~44 of {\em Ergebnisse der
  Mathematik und ihrer Grenzgebiete. 3.~Folge. A Series of Modern Surveys in
  Mathematics [Results in Mathematics and Related Areas. 3rd Series. A Series
  of Modern Surveys in Mathematics]}.
\newblock Springer-Verlag, Berlin, 2002.

\bibitem{Lueck(2002d)}
W.~L{\"u}ck.
\newblock The relation between the {B}aum-{C}onnes conjecture and the trace
  conjecture.
\newblock {\em Invent. Math.}, 149(1):123--152, 2002.

\bibitem{Lueck(2005r)}
W.~L{\"u}ck.
\newblock The {B}urnside ring and equivariant stable cohomotopy for infinite
  groups.
\newblock {\em Pure Appl. Math. Q.}, 1(3):479--541, 2005.

\bibitem{Lueck(2005c)}
W.~L{\"u}ck.
\newblock Equivariant cohomological {C}hern characters.
\newblock {\em Internat. J. Algebra Comput.}, 15(5-6):1025--1052, 2005.

\bibitem{Lueck(2005heis)}
W.~L{\"u}ck.
\newblock {$K$}- and {$L$}-theory of the semi-direct product of the discrete
  3-dimensional {H}eisenberg group by {${\Bbb Z}/4$}.
\newblock {\em Geom. Topol.}, 9:1639--1676 (electronic), 2005.

\bibitem{Lueck(2005s)}
W.~L{\"u}ck.
\newblock Survey on classifying spaces for families of subgroups.
\newblock In {\em Infinite groups: geometric, combinatorial and dynamical
  aspects}, volume 248 of {\em Progr. Math.}, pages 269--322. Birkh\"auser,
  Basel, 2005.

\bibitem{Lueck(2010asph)}
W.~L{\"u}ck.
\newblock Survey on aspherical manifolds.
\newblock In A.~Ran, H.~te~Riele, and J.~Wiegerinck, editors, {\em Proceedings
  of the 5-th European Congress of Mathematics Amsterdam 14 -18 July 2008},
  pages 53--82. EMS, 2010.

\bibitem{Lueck(2020book)}
W.~L\"uck.
\newblock {I}somorphism {C}onjectures in {$K$}- and {$L$}-theory.
\newblock in preparation, 2020.

\bibitem{Lueck-Oliver(2001b)}
W.~L{\"u}ck and B.~Oliver.
\newblock Chern characters for the equivariant ${K}$-theory of proper
  ${G}$-{C}{W}-complexes.
\newblock In {\em Cohomological methods in homotopy theory (Bellaterra, 1998)},
  pages 217--247. Birkh\"auser, Basel, 2001.

\bibitem{Lueck-Oliver(2001a)}
W.~L{\"u}ck and B.~Oliver.
\newblock The completion theorem in ${K}$-theory for proper actions of a
  discrete group.
\newblock {\em Topology}, 40(3):585--616, 2001.

\bibitem{Lueck-Reich(2005)}
W.~L{\"u}ck and H.~Reich.
\newblock The {B}aum-{C}onnes and the {F}arrell-{J}ones conjectures in {$K$}-
  and {$L$}-theory.
\newblock In {\em Handbook of $K$-theory. Vol. 1, 2}, pages 703--842. Springer,
  Berlin, 2005.

\bibitem{Lueck-Reich-Rognes-Varisco(2016assembly)}
W.~L{\"u}ck, H.~{R}eich, J.~{R}ognes, and M.~{V}arisco.
\newblock Assembly maps for topological cyclic homology of group algebras.
\newblock Preprint, arXiv:1607.03557 [math.KT], to appear in Crelle, 2016.

\bibitem{Lueck-Reich-Rognes-Varisco(2017)}
W.~L{\"u}ck, H.~Reich, J.~Rognes, and M.~Varisco.
\newblock Algebraic {K}-theory of group rings and the cyclotomic trace map.
\newblock {\em Adv. Math.}, 304:930--1020, 2017.

\bibitem{Lueck-Reich-Varisco(2003)}
W.~L{\"u}ck, H.~Reich, and M.~Varisco.
\newblock Commuting homotopy limits and smash products.
\newblock {\em $K$-Theory}, 30(2):137--165, 2003.
\newblock Special issue in honor of Hyman Bass on his seventieth birthday. Part
  II.

\bibitem{Lueck-Rosenthal(2014)}
W.~{L\"uck} and D.~{Rosenthal}.
\newblock {On the $K$- and $L$-theory of hyperbolic and virtually finitely
  generated abelian groups.}
\newblock {\em {Forum Math.}}, 26(5):1565--1609, 2014.

\bibitem{Lueck-Steimle(2016splitasmb)}
W.~L{\"u}ck and W.~Steimle.
\newblock Splitting the relative assembly map, {N}il-terms and involutions.
\newblock {\em Ann. K-Theory}, 1(4):339--377, 2016.

\bibitem{Matthey-Mislin(2003)}
M.~Matthey and G.~Mislin.
\newblock Equivariant ${K}$-homology and restriction to finite cyclic
  subgroups.
\newblock Preprint, 2003.

\bibitem{Mineyev-Yu(2002)}
I.~Mineyev and G.~Yu.
\newblock The {B}aum-{C}onnes conjecture for hyperbolic groups.
\newblock {\em Invent. Math.}, 149(1):97--122, 2002.

\bibitem{Mishchenko-Fomenko(1980)}
A.~S. Mi{\v{s}}{\v{c}}enko and A.~T. Fomenko.
\newblock The index of elliptic operators over ${C}\sp{\ast} $-algebras.
\newblock {\em Mathematics of the USSR-Izvestiya}, 15(1):87--112, 1980.

\bibitem{Mislin-Valette(2003)}
G.~Mislin and A.~Valette.
\newblock {\em Proper group actions and the {B}aum-{C}onnes conjecture}.
\newblock Advanced Courses in Mathematics. CRM Barcelona. Birkh\"auser Verlag,
  Basel, 2003.

\bibitem{Mitchener(2004)}
P.~D. Mitchener.
\newblock {$C^*$}-categories, groupoid actions, equivariant {$KK$}-theory, and
  the {B}aum-{C}onnes conjecture.
\newblock {\em J. Funct. Anal.}, 214(1):1--39, 2004.

\bibitem{Nikolaus-Scholze(2017)}
T.~Nikolaus and P.~Scholze.
\newblock On topological cyclic homology.
\newblock Preprint, arXiv:1707.01799 [math.AT], 2017.

\bibitem{Oyono-Oyono(2001)}
H.~Oyono-Oyono.
\newblock {B}aum-{C}onnes {C}onjecture and extensions.
\newblock {\em J. Reine Angew. Math.}, 532:133--149, 2001.

\bibitem{Oyono-Oyono(2001b)}
H.~Oyono-Oyono.
\newblock Baum-{C}onnes conjecture and group actions on trees.
\newblock {\em $K$-Theory}, 24(2):115--134, 2001.

\bibitem{Phillips(1989)}
N.~C. Phillips.
\newblock {\em Equivariant ${K}$-theory for proper actions}.
\newblock Longman Scientific \& Technical, Harlow, 1989.

\bibitem{Piazza-Schick(2007rho)}
P.~Piazza and T.~Schick.
\newblock Bordism, rho-invariants and the {B}aum-{C}onnes conjecture.
\newblock {\em J. Noncommut. Geom.}, 1(1):27--111, 2007.

\bibitem{Quinn(1971)}
F.~Quinn.
\newblock {$^{B}({\rm TOP}_{n})^{\ast \ast \ast bt\ast \ast }$} and the surgery
  obstruction.
\newblock {\em Bull. Amer. Math. Soc.}, 77:596--600, 1971.

\bibitem{Quinn(1995a)}
F.~Quinn.
\newblock Assembly maps in bordism-type theories.
\newblock In {\em Novikov conjectures, index theorems and rigidity, Vol.\ 1
  (Oberwolfach, 1993)}, pages 201--271. Cambridge Univ. Press, Cambridge, 1995.

\bibitem{Ranicki(1981)}
A.~A. Ranicki.
\newblock {\em Exact sequences in the algebraic theory of surgery}.
\newblock Princeton University Press, Princeton, N.J., 1981.

\bibitem{Ranicki(1992)}
A.~A. Ranicki.
\newblock {\em Algebraic ${L}$-theory and topological manifolds}.
\newblock Cambridge University Press, Cambridge, 1992.

\bibitem{Ranicki(1992a)}
A.~A. Ranicki.
\newblock {\em Lower ${K}$- and ${L}$-theory}.
\newblock Cambridge University Press, Cambridge, 1992.

\bibitem{Rosenberg(1986b)}
J.~Rosenberg.
\newblock ${C}\sp \ast$-algebras, positive scalar curvature and the {N}ovikov
  conjecture. {I}{I}{I}.
\newblock {\em Topology}, 25:319--336, 1986.

\bibitem{Rosenberg(1995)}
J.~Rosenberg.
\newblock Analytic {N}ovikov for topologists.
\newblock In {\em Novikov conjectures, index theorems and rigidity, Vol.\ 1
  (Oberwolfach, 1993)}, pages 338--372. Cambridge Univ. Press, Cambridge, 1995.

\bibitem{Rosenberg(2016)}
J.~Rosenberg.
\newblock Structure and applications of real {$C^*$}-algebras.
\newblock In {\em Operator algebras and their applications}, volume 671 of {\em
  Contemp. Math.}, pages 235--258. Amer. Math. Soc., Providence, RI, 2016.

\bibitem{Rueping(2016_S-arithmetic)}
H.~R{\"u}ping.
\newblock The {F}arrell--{J}ones conjecture for {$S$}-arithmetic groups.
\newblock {\em J. Topol.}, 9(1):51--90, 2016.

\bibitem{Schick(1998e)}
T.~Schick.
\newblock A counterexample to the (unstable) {G}romov-{L}awson-{R}osenberg
  conjecture.
\newblock {\em Topology}, 37(6):1165--1168, 1998.

\bibitem{Schick(2004)}
T.~Schick.
\newblock Index theory and the {B}aum-{C}onnes conjecture.
\newblock In {\em Geometry {S}eminars. 2001-2004 ({I}talian)}, pages 231--280.
  Univ. Stud. Bologna, Bologna, 2004.

\bibitem{Schick(2004real_versus_complex)}
T.~Schick.
\newblock Real versus complex {$K$}-theory using {K}asparov's bivariant
  {$KK$}-theory.
\newblock {\em Algebr. Geom. Topol.}, 4:333--346, 2004.

\bibitem{Schwede(2018book)}
S.~Schwede.
\newblock {\em Global homotopy theory}, volume~34 of {\em New Mathematical
  Monographs}.
\newblock Cambridge University Press, Cambridge, 2018.

\bibitem{Stolz(2002)}
S.~Stolz.
\newblock Manifolds of positive scalar curvature.
\newblock In T.~Farrell, L.~G\"ottsche, and W.~L{\"u}ck, editors, {\em High
  dimensional manifold theory}, number~9 in ICTP Lecture Notes, pages 661--708.
  Abdus Salam International Centre for Theoretical Physics, Trieste, 2002.
\newblock Proceedings of the summer school ``High dimensional manifold theory''
  in Trieste May/June 2001, Number~2.
  http://www.ictp.trieste.it/\~{}pub\_off/lectures/vol9.html.

\bibitem{Dieck(1972)}
T.~tom Dieck.
\newblock Orbittypen und \"aquivariante {H}omologie. {I}.
\newblock {\em Arch. Math. (Basel)}, 23:307--317, 1972.

\bibitem{Valette(2002)}
A.~Valette.
\newblock {\em Introduction to the {B}aum-{C}onnes conjecture}.
\newblock Birkh\"auser Verlag, Basel, 2002.
\newblock From notes taken by Indira Chatterji, With an appendix by Guido
  Mislin.

\bibitem{Vogell(1990)}
W.~Vogell.
\newblock Algebraic {$K$}-theory of spaces, with bounded control.
\newblock {\em Acta Math.}, 165(3-4):161--187, 1990.

\bibitem{Vogell(1991)}
W.~Vogell.
\newblock Boundedly controlled algebraic {$K$}-theory of spaces and its linear
  counterparts.
\newblock {\em J. Pure Appl. Algebra}, 76(2):193--224, 1991.

\bibitem{Waldhausen(1978)}
F.~Waldhausen.
\newblock Algebraic {$K$}-theory of topological spaces. {I}.
\newblock In {\em Algebraic and geometric topology (Proc. Sympos. Pure Math.,
  Stanford Univ., Stanford, Calif., 1976), Part 1}, Proc. Sympos. Pure Math.,
  XXXII, pages 35--60. Amer. Math. Soc., Providence, R.I., 1978.

\bibitem{Waldhausen(1985)}
F.~Waldhausen.
\newblock Algebraic ${K}$-theory of spaces.
\newblock In {\em Algebraic and geometric topology (New Brunswick, N.J.,
  1983)}, pages 318--419. Springer-Verlag, Berlin, 1985.

\bibitem{Waldhausen(1987a)}
F.~Waldhausen.
\newblock Algebraic ${K}$-theory of spaces, concordance, and stable homotopy
  theory.
\newblock In {\em Algebraic topology and algebraic $K$-theory (Princeton, N.J.,
  1983)}, pages 392--417. Princeton Univ. Press, Princeton, NJ, 1987.

\bibitem{Waldhausen(1987b)}
F.~Waldhausen.
\newblock An outline of how manifolds relate to algebraic {$K$}-theory.
\newblock In {\em Homotopy theory (Durham, 1985)}, volume 117 of {\em London
  Math. Soc. Lecture Note Ser.}, pages 239--247. Cambridge Univ. Press,
  Cambridge, 1987.

\bibitem{Jahren-Rognes-Waldhausen(2013)}
F.~Waldhausen, B.~Jahren, and J.~Rognes.
\newblock {\em Spaces of {PL} manifolds and categories of simple maps}, volume
  186 of {\em Annals of Mathematics Studies}.
\newblock Princeton University Press, Princeton, NJ, 2013.

\bibitem{Wall(1999)}
C.~T.~C. Wall.
\newblock {\em Surgery on compact manifolds}, volume~69 of {\em Mathematical
  Surveys and Monographs}.
\newblock American Mathematical Society, Providence, RI, second edition, 1999.
\newblock Edited and with a foreword by A. A. Ranicki.

\bibitem{Wegner(2012)}
C.~Wegner.
\newblock The {$K$}-theoretic {F}arrell-{J}ones conjecture for {CAT}(0)-groups.
\newblock {\em Proc. Amer. Math. Soc.}, 140(3):779--793, 2012.

\bibitem{Wegner(2015)}
C.~Wegner.
\newblock The {F}arrell-{J}ones conjecture for virtually solvable groups.
\newblock {\em J. Topol.}, 8(4):975--1016, 2015.

\bibitem{Weiss-Williams(1995a)}
M.~Weiss and B.~Williams.
\newblock Assembly.
\newblock In {\em Novikov conjectures, index theorems and rigidity, Vol.\ 2
  (Oberwolfach, 1993)}, pages 332--352. Cambridge Univ. Press, Cambridge, 1995.

\bibitem{Weiss-Williams(2001)}
M.~Weiss and B.~Williams.
\newblock Automorphisms of manifolds.
\newblock In {\em Surveys on surgery theory, Vol. 2}, volume 149 of {\em Ann.
  of Math. Stud.}, pages 165--220. Princeton Univ. Press, Princeton, NJ, 2001.

\end{thebibliography}

%%%%%%%%%%%%%%%%%%%%%%%%%%%%%%%%%%%%%%%%%%%%%%%%%%%%%%%%%%%%%%%%%%%%%%%%%%%%%%%%% 
%%%%%%%%%%%%%%%%%%%%%%%%%%%%%%%%%%%%%%%%%%%%%%%%%%%%%%%%%%%%%%%%%%%%%%%%%%%%%%%%% 
%%%%%%%%%%%%%%%%%%%%%%%%%%%%%%%%%%%%%%%%%%%%%%%%%%%%%%%%%%%%%%%%%%%%%%%%%%%%%%%%% 

% \typeout{------------------------------------------------------------------------------}
% \typeout{}
% \typeout{-------------------- Index ---------------------------------}
% \typeout{}
% \typeout{------------------------------------------------------------------------------}

%\chapter*{Index}
%\chaptermark{Index}
%\addcontentsline{toc}{chapter}{Index}
% \flushbottom
% \printindex                                  % prints out index. Before
%                                            % one has to call makeindex b

%\version{02.01.2018}

\end{document}